\numberwithin{equation}{section}
\theoremstyle{plain}
\newtheorem{corollary}{Corollary}
\newtheorem{proposition}{Proposition}
\newtheorem{theorem}{Theorem}
\theoremstyle{remark}
\newtheorem{definition}{Definition}
\newtheorem{remark}{Remark}
\newtheorem*{notation}{Notation}
\def\N{\mathbb{N}}
\def\R{\mathbb{R}}
\def\Z{\mathbb{Z}}
\def\i{\mathrm i}
\def\d{\mathrm d}
\def\e{\mathrm e}
\def\E{\mathrm E}
\newcommand{\nn}{\nonumber}
\newcommand{\noi}{\noindent}
\def\1{{\bf 1}}
\def\0{{\bf 0}}
\newcommand{\mbe}{\boldsymbol{e}}
\newcommand{\mbH}{\boldsymbol{H}}
\newcommand{\mbs}{\boldsymbol{s}}
\newcommand{\mbt}{\boldsymbol{t}}
\newcommand{\mbu}{\boldsymbol{u}}
\newcommand{\mbx}{\boldsymbol{x}}
\newcommand{\mby}{\boldsymbol{y}}
\def\norm{|\!|}
\def\limd{
	\begin{array}[t]{c}
		\stackrel{{\rm d}}{\to}
	\end{array}}
\def\eqd{
	\begin{array}[t]{c}
		\stackrel{{\rm d}}{=}
\end{array}}
\def\neqd{
	\begin{array}[t]{c}
		\stackrel{{\rm d}}{\neq}
\end{array}}
 \def\limfdd{
 	\begin{array}[t]{c}
 		\stackrel{{\rm fdd}}{\to}
 \end{array}}
\def\eqfdd{
	\begin{array}[t]{c}
		\stackrel{{\rm fdd}}{=}
	\end{array}}
\def\neqfdd{
	\begin{array}[t]{c}
		\stackrel{{\rm fdd}}{\neq}
	\end{array}}
\begin{document}

\begin{frontmatter}
\title{Local scaling limits of L\'evy driven fractional random fields}
\runtitle{Local scaling limits of L\'evy driven fractional RFs}

\begin{aug}
\author[A]{\fnms{Vytaut\.e} \snm{Pilipauskait\.e}\ead[label=e1,mark]{vytaute.pilipauskaite@gmail.com}}
\and
\author[B]{\fnms{Donatas} \snm{Surgailis}\ead[label=e2]{donatas.surgailis@mif.vu.lt}}

\address[A]{University of Luxembourg, Department of Mathematics,
	6 Avenue de la Fonte, 4364 Esch-sur-Alzette, Luxembourg, \printead{e1}}

\address[B]{Vilnius University, Faculty of Mathematics and Informatics, Naugarduko 24, 03225 Vilnius, Lithuania, \printead{e2}}
\end{aug}

\begin{abstract}
We obtain a complete description of local anisotropic scaling limits for a class of fractional  random fields $X$ on ${\mathbb{R}}^2$ written as stochastic integral with respect to infinitely divisible random measure.
The scaling procedure involves increments of $X$ over points the distance between which in the horizontal and vertical  directions shrinks as $O(\lambda) $ and $O(\lambda^\gamma)$ respectively as $\lambda \downarrow 0$, for some $\gamma>0$.
We consider two types of increments  of $X$: usual increment and rectangular increment, leading to the respective concepts of $\gamma$-tangent and $\gamma$-rectangent random fields.
We prove that for above $X$ both  types of local scaling  limits exist for any $\gamma>0$ and undergo a transition, being independent of $\gamma>\gamma_0$ and $\gamma<\gamma_0$, for some $\gamma_0>0$; moreover,  the `unbalanced' scaling limits ($\gamma\ne\gamma_0$) are  $(H_1,H_2)$-multi self-similar with one of $H_i$, $i=1,2$, equal to $0$ or $1$.
The paper extends Pilipauskait\.e  and Surgailis (2017) and Surgailis (2020) on large-scale anisotropic scaling of random fields on ${\mathbb{Z}}^2$ and Benassi et al.\ (2004) on $1$-tangent limits of isotropic fractional L\'evy random fields.
\end{abstract}

\begin{keyword}
%\kwd{???}
\kwd{Fractional random field; local anisotropic scaling limit; rectangular increment; L\'evy random measure; scaling transition; multi self-similar random field }
\end{keyword}

\end{frontmatter}

%%%%%%%%%%%%%%%%%%%%%%%%%%%%%%%%%%%%%%%%%%%%%%
%%%% Main text entry area:

\section{Introduction}

The present paper discusses local anisotropic scaling behavior of a class of fractional type infinitely divisible random fields (RFs) on $\R^2 $.
This behavior is characterized by limits of shrinking increments of RF under anisotropic scaling, where anisotropy is due to the fact that the `horizontal' and `vertical dimensions' of the increment tend to 0 with $\lambda \downarrow 0$ at different rates $\lambda $ and $\lambda^\gamma$ for any given $\gamma >0$.

Given a RF $X = \{X(\mbt), \, \mbt \in \R^2 \}$, the two basic types of its increment at a point $\mbt_0 = (t_{01}, t_{02}) \in \R^2 $ are
\emph{(ordinary) increment}  $X(\mbt_0 + \mbt) - X(\mbt_0)$ and \emph{rectangular increment}
\begin{equation}\label{Xinc}
	X((\boldsymbol{t}_0, \mbt_0 + \boldsymbol{t}]) := X(t_{01}+t_1, t_{02} + t_2)-X(t_{01},t_{02}+ t_2)- X(t_{01} + t_1, t_{02}) + X(t_{01},t_{02})
\end{equation}
for $\mbt=(t_1,t_2) \in \R^2_+$.
These two notions of increment give rise to different notions of RF with
\emph{stationary increments} and \emph{stationary rectangular increments} \cite{bass2012}.
For $\gamma >0$, $\lambda >0$, let $\Gamma = \operatorname{diag}(1, \gamma)$, $\lambda^\Gamma = \operatorname{diag}(1, \lambda^\gamma)$.
With $\lambda^\Gamma \mbt = (\lambda t_1, \lambda^\gamma t_2)$, (anisotropic) local scaling limits of RF $X$ at $\mbt_0$ can be defined as the limits (in the sense of weak convergence of finite-dimensional distributions) as $\lambda \downarrow 0$:
\begin{align}
	d_{\lambda,\gamma}^{-1} (X(\mbt_0 + \lambda^\Gamma \mbt) - X(\mbt_0)) &\limfdd T_\gamma(\mbt), \label{Tlim}\\
	d_{\lambda,\gamma}^{-1} X((\mbt_0, \mbt_0 + \lambda^{\Gamma}\mbt])&\limfdd V_\gamma(\mbt), \label{Vlim}
\end{align}
where $d_{\lambda,\gamma} \downarrow 0$ is a normalization, the latter being generally different for
\eqref{Tlim} and \eqref{Vlim}.
Here and below, we suppress the dependence of these scaling limits on $\mbt_0$ also because in our theorems they actually do not depend on it.
In the case of isotropic scaling $\gamma =1 $ the limit $T_1$ in \eqref{Tlim} (called the \emph{tangent RF}) was introduced in \cite{falc2002} with $\limfdd$ replaced by a stronger functional convergence.
The existence of (non-trivial) tangent RF is also termed \emph{local asymptotic self-similarity} \cite{benn2004, cohen2012, cohen2013} since the tangent RF is self-similar  \cite{falc2002}.
In fact, following \cite{falc2002}, \cite[Prop.~2.1]{ps2016}, one can prove that under mild additional conditions all scaling limits in \eqref{Tlim}--\eqref{Vlim} satisfy the \emph{$(H,\gamma)$-SS} (self-similarity) property:
\begin{eqnarray}\label{gammass}
	U(\lambda^\Gamma \mbt) \eqfdd \lambda^{H} U(\mbt), \qquad \forall \lambda >0,
\end{eqnarray}
with some $H = H(\gamma)>0$; moreover, the normalization  $d_{\lambda,\gamma}$ is regularly varying with exponent $H $ as $\lambda \downarrow 0$.
Superseding  this terminology we call $T_\gamma $ in \eqref{Tlim} the \emph{$\gamma$-tangent RF} and $V_\gamma $ in \eqref{Vlim} the \emph{$\gamma$-rectangent RF} (`rectangent' as the abridge for `rectangular tangent').
Note that for $\gamma =1 $ \eqref{gammass} yields the (usual) SS property for RF indexed by $\R^2 $ or $\R^2_+$, see \cite{lamp1962, falc2002, samo1994}, for general $\gamma >0$  \eqref{gammass} is a particular case of operator scaling property discussed in \cite{bier2007}.

The RFs $X$ for which $\gamma$-tangent and  $\gamma$-rectangent RF ($\gamma>0 $ arbitrary) are identified in this paper are written as stochastic integrals
\begin{equation}\label{def:X}
	X(\boldsymbol{t})  :=  \int_{\R^2} \big\{g (\boldsymbol{t}- \boldsymbol{u}) - g^0_1 ((t_1,0)- \boldsymbol{u})  -  g^0_2 ((0,t_2)- \boldsymbol{u}) +
	g^0_{12}(- \boldsymbol{u})\big\}
	M(\d \boldsymbol{u}),  \quad \boldsymbol{t} = (t_1,t_2) \in \R^2_+,
\end{equation}
where $M$ is an infinitely divisible random measure   (also called a L\'evy basis) on $\R^2$ and $g$, $g^0_1$, $g^0_2$, $g^0_{12}$ are deterministic functions satisfying some conditions guaranteeing the existence of \eqref{def:X}.
For reasons explained below, we call $X$ in \eqref{def:X} a \emph{L\'evy driven fractional RF}.
It follows from \eqref{def:X} that RF $X$ has stationary rectangular increments which do not depend on the `initial' functions  $g^0_1$, $g^0_2$, $g^0_{12}$, viz.,
\begin{equation}\label{Xinc1}
	X((\boldsymbol{0}, \boldsymbol{t}])
	=  \int_{\R^2} g ((-\mbu, \boldsymbol{t}- \boldsymbol{u}])
	M(\d \boldsymbol{u}), \qquad \mbt \in \R^2_+,
\end{equation}
where $g ((-\mbu, \boldsymbol{t}- \boldsymbol{u}]) := g(t_1-u_1,t_2-u_2)-g(-u_1,t_2-u_2)- g(t_1-u_1,-u_2) + g(-u_1,-u_2) $ in accordance with the
notation in \eqref{Xinc}.
Similarly, if $g^0_1=g^0_2 = 0$ then $X$ in \eqref{def:X} has stationary (ordinary) increments.
In both cases, the scaling limits in \eqref{Tlim}--\eqref{Vlim} (provided they exist) do not depend on $\mbt_0$ and depend on scaling properties of kernel $g$ and the L\'evy basis $M$  specified in Assumptions (G)$_\alpha$ and (M)$_\alpha$ below  (roughly, the last assumption means that the small-scale behavior of $M$  is $\alpha$-stable with $0< \alpha \le 2$).
The class of RFs in  \eqref{def:X} is quite large and contains many fractional RFs studied in \cite{benn2004, cohen2012, take1991, jons2013, hansen2013} and elsewhere.
\eqref{def:X} also constitute a natural spatial  generalization of L\'evy driven moving average processes with one-dimensional time studied in \cite{bass2017}.

The main results of this paper can be summarised as follows.
We prove that for a class of fractional L\'evy driven RFs in \eqref{def:X} the $\gamma$-rectangent limits $V_\gamma$ exist for any $\gamma>0$ and are $\alpha$-stable RFs; moreover, the limit family $\{V_\gamma, \, \gamma>0\}$ exhibits a `scaling transition'  in the sense that there exists $\gamma_0 >0$ such that $V_\gamma = V_+ $ (respectively, $V_\gamma = V_- $) do not depend on $\gamma > \gamma_0$ (respectively, on $\gamma < \gamma_0$) and  $V_+ \neqfdd cV_- $  for any $c>0$;
moreover, the `unbalanced'  $\gamma$-rectangent limits $V_\pm$ are \emph{$(H_1,H_2)$-multi self-similar} (MSS) RFs (see \eqref{mss}) with \emph{one of the self-similarity parameters $H_i$, $i=1,2$, equal $1$ or $0$}.
We also prove somewhat similar although more straightforward results, including a `scaling transition', about $\gamma$-tangent limits $T_\gamma$ for a related class of  fractional L\'evy driven RFs in \eqref{def:X}.

Related trichotomy of the scaling behavior was reported in large-scale anisotropic scaling for several classes of long-range dependent (LRD) planar RF models, with rectangular increment replaced by a sum or integral of the values on large rectangle with sides increasing at different rates $\lambda$ and $\lambda^\gamma$ as $\lambda \to \infty$ for any given $\gamma>0$; see  \cite{ps2015, ps2016, pils2016, pils2017, pils2020, sur2020}. See also \cite{bier2017}.  
In the above works, this trichotomy was  termed the \emph{scaling transition}, with $V_\pm $ the \emph{unbalanced} and $V_{\gamma_0}$ the \emph{well-balanced} scaling limits.
The present paper can be regarded as a continuation of the above research and we use the same terminology in the case of the `small-scale' limits in \eqref{Tlim}--\eqref{Vlim}.
As noted above the unbalanced limits in \eqref{Tlim}--\eqref{Vlim} have a very particular dependence structure being $(H_1,H_2)$-MSS RFs with one of $H_i$, $i=1,2$, equal $0$ or $1$.
Following  \cite{gent2007} we call a RF $V = \{V(\mbt), \, \mbt \in \R^2_+ \}$ $(H_1,H_2)$-MSS with parameters  $H_i \ge 0$, $i=1,2$, if
\begin{equation} \label{mss}
	V(\lambda_1 t_1, \lambda_2 t_2) \eqfdd  \lambda^{H_1}_1 \lambda_2^{H_2} V (\boldsymbol{t}), \qquad \forall \lambda_1 >0, \ \forall \lambda_2 >0.
\end{equation}
(We note that while \cite{gent2007} assume $H_i >0$, $i=1,2$, relation \eqref{mss} makes sense when $H_1 \wedge H_2 = 0$ as well.)
The `classical' case of $(H_1,H_2)$-MSS RF is \emph{Fractional Brownian Sheet} (FBS) $B_{H_1,H_2} $ defined as a Gaussian process on $\R^2_+$ with zero mean and the covariance function:
\begin{equation}\label{FBScov}
	\E B_{H_1,H_2}(\mbt)   B_{H_1,H_2} (\mbs) = {\textstyle\frac{1}{4}} \prod_{i=1}^2 (t_i^{2H_i} +  s_i^{2H_i} - |t_i-s_i|^{2H_i}),
\end{equation}
for $\mbt = (t_1,t_2)\in \R^2_+$, $\mbs = (s_1,s_2) \in \R^2_+$.
The parameters $H_i$, $i=1,2$, of FBS usually take values in the interval $(0,1]$ or even $(0,1)$, see \cite{aya2002}; the extension to $H_i \in [0,1]$ was defined in \cite{sur2020} from \eqref{FBScov} by continuity as $H_i \downarrow 0$, $i=1,2$, see Definition \ref{defFBS}.
FBS with $H_1 \wedge H_2 = 0$ are very unusual and extremely singular objects and their appearance in limit theorems is surprising \cite{sur2020}.
We note the FBS with $(H_1,H_2) = (0, \frac{1}{2}) $ or $(\frac{1}{2},0)$ as anisotropic partial sums  limits of linear RFs on $\Z^2 $ with negative dependence and edge effects were obtained in \cite{sur2020}; in the case of LRD  RFs on $\Z^2 $ unbalanced Gaussian limits were proved to be FBS  with at least one of  $H_i$, $i=1,2$, equal $1/2$ or $1$ \cite{pils2017, pils2020}.

Let us describe the results of the present paper informally in more detail. Rigorous formulations are given in Sec.~\ref{sec5} and \ref{sec6}.
We assume that $g$ in \eqref{def:X} and \eqref{Xinc1} features  a power-law behavior at the origin with possibly different exponents along the coordinate axes, more precisely, as $\mbt \to \0$,
\begin{equation} \label{glim}
	g(\mbt) \sim g_0(\mbt) :=\rho(\mbt)^\chi L(\mbt),
	\qquad \text{where } \rho(\mbt):=  |t_1|^{q_1} + |t_2|^{q_2},
\end{equation}
$q_1>0$, $q_2>0$, $Q:=\frac{1}{q_1}+\frac{1}{q_2}$, $\chi \ne 0$ are parameters, and $L(\mbt)$, $\mbt \in \R^2_0$, is a (generalized invariant) function satisfying some boundedness and regularity conditions.
For $L(\mbt) \equiv 1 $, the kernel $g_0(\mbt) = \rho(\mbt)^\chi  $ vanishes or explodes at the origin $\mbt = \boldsymbol{0}$ depending on the sign of the parameter $\chi $ suggesting a different scaling behavior in \eqref{Vlim} in the  cases $\chi >0$ and $\chi <0$.
Actually, the limit results essentially depend on the two parameters
\begin{equation}\label{pp}
p_i := q_i(Q-\chi) >0,  \qquad i=1,2,
\end{equation}
alone, making the parametrization $p_i$, $i=1,2$, in \eqref{pp} more convenient  than $\chi$, $q_i$, $i=1,2$, in \eqref{glim}.
The parameters  \eqref{pp} satisfy
\begin{eqnarray}\label{pin}
	&\frac{\alpha}{1+\alpha} < P < \alpha, \qquad \text{where } P := \frac{1}{p_1} + \frac{1}{p_2}.
\end{eqnarray}
Note for $p_i$, $i=1,2$, in \eqref{pp} satisfying \eqref{pin} $\chi >0 $ is equivalent to $P > 1 $.
For $c_i >0$, $i=1,2$, denote
\begin{eqnarray}\label{Pcc}
	&P_{c_1,c_2} := \frac{c_1}{p_1} + \frac{c_2}{p_2}
\end{eqnarray}
so that $P = P_{1,1}$.
The main results of this paper are represented in Table~\ref{tab1} and Figure~\ref{fig} showing four sets $R_{11}$, $R_{12}$, $R_{21}$, $R_{22}$ in the parameter region \eqref{pin}  determined by segments $P_{\frac{1}{\alpha}, \frac{1+\alpha}{\alpha}} = 1$, $P_{\frac{1+\alpha}{\alpha}, \frac{1}{\alpha}} = 1 $ with different unbalanced rectangent  limits $V_\pm $.
The critical or the scaling transition point in all four regions $R_{ij}$, $i,j=1,2$, is the same, namely
\begin{eqnarray} \label{gamma0}
	&\gamma_0 := \frac{p_1}{p_2} = \frac{q_1}{q_2}.
\end{eqnarray}
The four RFs $\tilde \Upsilon_{\alpha,i}$, $\Upsilon_{\alpha,i}$, $i=1,2$, in Table~\ref{tab1} are defined in Sec.~\ref{sec4} as integrals with respect to (w.r.t.) $\alpha$-stable random measure on $\R^2$ of non-random integrands determined by certain increments of  $g_0$ in \eqref{glim}
or its partial derivatives. All four RFs  have $\alpha$-stable finite-dimensional distributions and are $(H_1,H_2)$-MSS
with indices shown in Table~\ref{tab1}.
In the Gaussian case $\alpha =2 $ the RFs in Table~\ref{tab1} agree with FBS with the corresponding
parameters $(H_1,H_2)$.

\vskip-.5cm
\begin{figure}[ht]
	\begin{center}
		\includegraphics[width=15 cm,height=16cm, keepaspectratio]{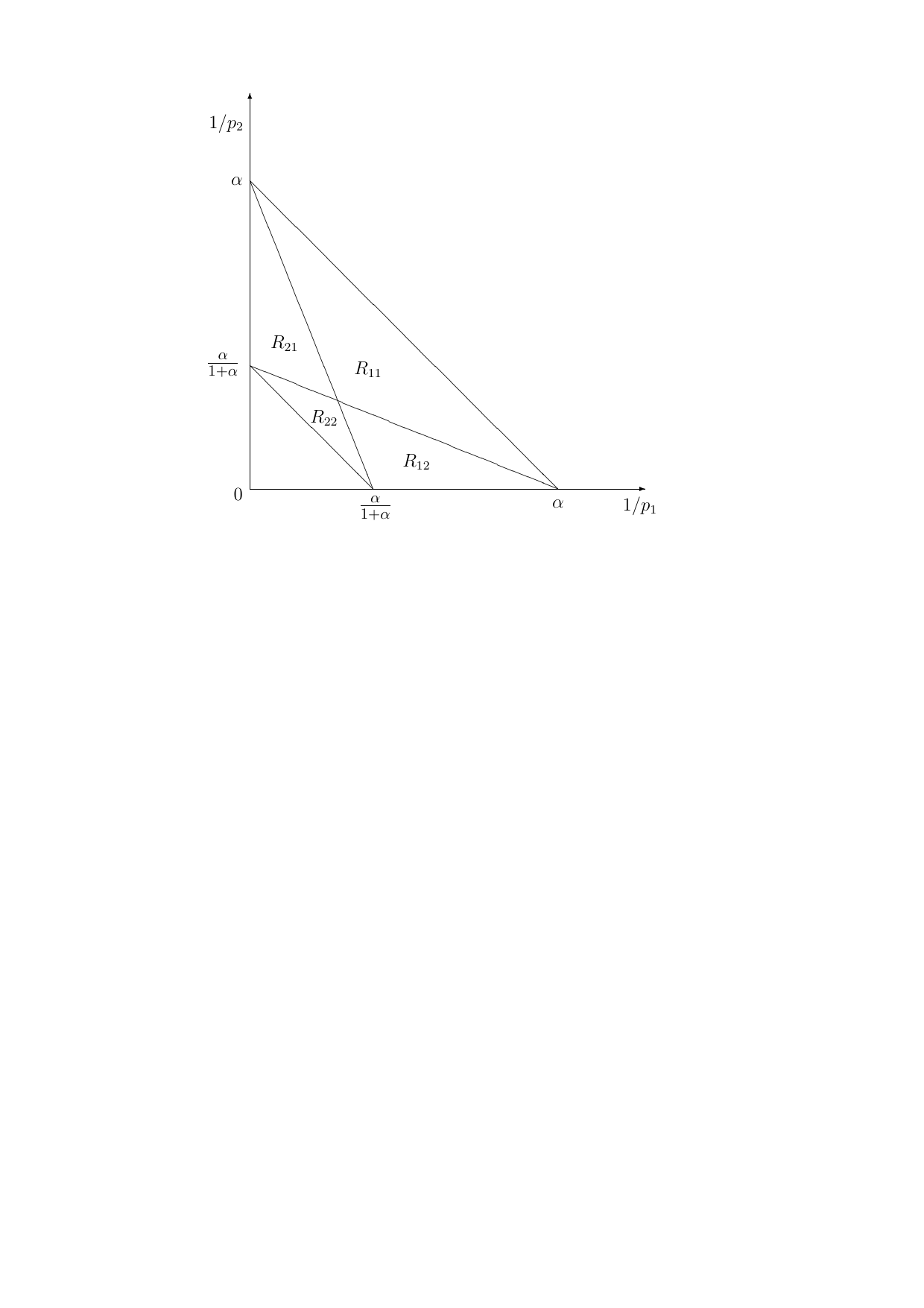}
		\vspace{-9.5cm}
	\end{center}
	\caption{Regions in the parameter set
		$\frac{\alpha}{1+\alpha} < P < \alpha $ with different unbalanced rectangent limits.}\label{fig}
\end{figure}

\vskip-.5cm

\begin{table}[htbp]
	\begin{center}
		\begin{tabular}{ccccc}
			\hline
			{Parameter region}  & {$V_+$} & {Hurst parameters} &  {$ V_-$}  & {Hurst parameters}   \\
			\hline
			{$R_{11}$} & $\tilde \Upsilon_{\alpha,1}$  & $0< H_1 < 1, H_2 =1 $ &  $\tilde \Upsilon_{\alpha,2}$ & $H_1 = 1, 0< H_2 < 1$ \\
			$R_{12}$  & $\tilde \Upsilon_{\alpha,1}$   & $0< H_1 < 1, H_2 =1 $  &   $\Upsilon_{\alpha,1}$ &     $0< H_1 < 1,  H_2 = 0$   \\
			$R_{21}$  &  $\Upsilon_{\alpha,2}$   &   $H_1 = 0, 0< H_2 < 1$  & $\tilde \Upsilon_{\alpha,2}$  &   $H_1=1, 0< H_2 < 1$   \\
			$R_{22}$  &  $\Upsilon_{\alpha,2}$   &   $H_1 =0, 0< H_2 < 1$   &   $\Upsilon_{\alpha,1}$ &   $0< H_1 < 1,  H_2 = 0$     \\
			\hline
		\end{tabular} \\
		\caption{Unbalanced rectangent scaling limits $V_\pm $ and their Hurst parameters in regions $R_{ij}$, $i,j=1,2$, in Figure~\ref{fig}.
		}\label{tab1}
	\end{center}
\end{table}
\vskip-.2cm

Let us briefly describe our results concerning $\gamma$-tangent limits in \eqref{Tlim}.
As mentioned above we consider the case of \eqref{def:X} with $g^0_{1}  = g^0_2 = 0$ so that
\begin{equation}\label{Xinc2}
	X(\mbt) = \int_{\R^2} (g(\mbt-\mbu) - g(-\mbu)) M(\d \mbu), \qquad \mbt \in \R^2,
\end{equation}
provided $g^0_{12} = - g $. When $g(\mbt) = (|t_1|^2 + |t_2|^2)^{\frac{H-1}{2}}$, $\mbt = (t_1,t_2) \in \R^2_0$, is an isotropic  homogeneous function, $0< H< 1$, and $M$ has finite variance, the RF in \eqref{Xinc2} is called \emph{L\'evy fractional RF},  see Sec.~\ref{sec3}.
Tangent ($1$-tangent) RFs of L\'evy fractional RF with truncated $\alpha$-stable $M$ were studied in \cite{benn2004, cohen2012, cohen2013}.
There it was shown that the tangent RF has a similar representation as in  \eqref{Xinc2} with $M$ replaced by $\alpha$-stable $W_\alpha $.
In our paper we extend these results to anisotropic kernels $g$ satisfying \eqref{glim} with $\chi <0$ and more general $M$ and show that they correspond to the well-balanced limit $T_{\gamma_0}$ in \eqref{Tlim} at $\gamma = \gamma_0 $ given in \eqref{gamma0}.
We also prove the existence of the unbalanced limits $T_+ = T_\gamma$ $(\gamma > \gamma_0)$ and $T_- = T_\gamma$ $(\gamma < \gamma_0)$ given by $T_+ (\mbt) := T_{\gamma_0} (t_1) $ and $T_- (\mbt) := T_{\gamma_0} (t_2)$, $\mbt = (t_1,t_2) \in \R^2_+$, which depend on only one coordinate
in contrast to rectangent limits in Table~\ref{tab1}.

The rest of the paper is organized as follows.
Sec.~\ref{sec2} provides rigorous assumptions about $g$ and $M$ and some preliminary facts needed to prove our results.
Sec.~\ref{sec3} presents some examples of fractionally integrated RFs satisfying the assumptions in Sec.~\ref{sec2}.
The $\alpha$-stable MSS RFs in Table~\ref{tab1} are defined in Sec.~\ref{sec4}.
The main results (Theorem \ref{mainthm}) pertaining to Table~\ref{tab1} are given in Sec.~\ref{sec5}.
Sec.~\ref{sec6} discusses $\gamma$-tangent limits.
Some concluding remarks are given in Sec.~\ref{sec7}.

\begin{notation}
	In what follows, $C$ denote  generic positive constants which may be different at different locations.
	We write $\limd$, $\eqd$, $\neqd$ ($\limfdd$, $\eqfdd$, $ \neqfdd$) for the weak convergence, equality, and inequality of (finite-dimensional) distributions.
	$\1 := (1,1)$, $\0 := (0,0)$, $\R^2_0 :=  \R^2 \setminus \{ \0\}$, $\R_0 := \R \setminus \{0\}$,   $\R_+^2 := \{  \mbx = (x_1,x_2) \in \R^2: x_i > 0, \, i=1,2 \}$, $\R_+ := (0,\infty)$, $(\0, \mbx] := (0,x_1]\times (0,x_2]$, $\mbx = (x_1,x_2) \in \R^2_+$, $\mbe_1 = (1,0)$, $\mbe_2 = (0,1)$, $|\mbx| = |x_1| + |x_2|$, $\norm \mbx \norm := (x_1^2 + x_2^2)^{\frac{1}{2}}$, $\mbx \cdot \mby  = x_1 y_1 + x_2 y_2$, $\|f\|_\alpha := (\int_{\R^2} |f(\mbu)|^\alpha \d \mbu)^{\frac{1}{\alpha}}$, $\alpha >0$.
	$I(A)$ stands for indicator function of a set $A$.
\end{notation}

\section{Assumptions and preliminaries}
\label{sec2}

Given a function $f : \R^2 \to \R$ we use the following notation for partial  derivatives at $\boldsymbol{t}=(t_1,t_2) \in \R^2$:
\begin{equation}\label{partialf}
	\partial_i  f(\mbt) = \partial f(\mbt)/\partial t_i, \quad i=1,2, \qquad \partial_{12} f(\mbt)
	:= \partial^2 f(\mbt)/\partial t_1 \partial t_2.
\end{equation}
Following \cite{pils2020}, we say that a measurable function  $f : \R^2_0 \to \R$ is \emph{generalized homogeneous} (respectively, \emph{generalized invariant}) if there exist some positive $q_1, q_2$ such that $\lambda f(\lambda^{1/q_1}t_1, \lambda^{1/q_2} t_2) =  f(\mbt) $ holds for all $\lambda >0$,  $\mbt \in \R^2_0 $ (respectively, $f(\lambda^{1/q_1}t_1, \lambda^{1/q_2} t_2)$ does not depend on $\lambda >0$ for any $ \mbt \in \R^2_0 $).
Every generalized homogeneous function $f(\mbt)$, $\mbt \in \R^2_0$, can be represented as $f(\mbt) = \rho(\mbt)^{-1} \ell (\mbt)$ with $\rho(\mbt) = |t_1|^{q_1} +  |t_2|^{q_2} $ and a generalized invariant function $\ell(\mbt) = \tilde \ell ( t_1/\rho(\mbt)^{1/q_1}, t_2/\rho(\mbt)^{1/q_2})$, where $\tilde \ell$ is a restriction of $f$ to $\{\mbt \in \R^2_0: \rho(\mbt) = 1\}$, see \cite{pils2020}.
We also note that if $f > 0$ is a generalized homogeneous function and $\chi \in \R_0$, then the function $f^\chi $ and its partial derivatives $\partial_i f^\chi$, $i=1,2$, (provided they exist) satisfy the following scaling relations: for all $\lambda >0$, $\mbt \in \R^2_0$,
\begin{equation}\label{fkappa}
	f^\chi (\lambda^{\frac{1}{q_1}} t_1, \lambda^{\frac{1}{q_2}} t_2) = \lambda^{-\kappa} f^\chi(\mbt), \qquad
	\partial_i f^\chi (\lambda^{\frac{1}{q_1}} t_1, \lambda^{\frac{1}{q_2}} t_2) = \lambda^{-\chi -\frac{1}{q_i}} \partial_i f^\chi(\mbt), \quad i=1, 2.
\end{equation}
We shall also need some properties of the above function $\rho$ from  \cite{pils2017, pils2020, sur2019}. Note the elementary inequality: for any $\nu >0$,
\begin{equation}\label{rhoineq}
	C_1 \rho(\mbt)  \le \rho(|t_1|^\nu, |t_2|^\nu)^{\frac 1 \nu} \le C_2 \rho(\mbt), \qquad \mbt \in \R^2,
\end{equation}
with $C_i >0$, $i=1,2$, independent of $\mbt$, see \cite[(2.16)]{sur2019}. From \eqref{rhoineq} and \cite[Prop.~5.1]{pils2017} we obtain for any $\delta, \nu >0$ with $Q = \frac{1}{q_1} + \frac{1}{q_2}$,
\begin{align}\label{intrho}
	\int_{\R^2} \rho (\mbt)^{-\nu}  I( \rho(\mbt) < \delta ) \d \mbt
	< \infty \Longleftrightarrow Q> \nu, \qquad
	\int_{\R^2} \rho (\mbt)^{-\nu}  I( \rho(\mbt) \ge \delta ) \d \mbt
	< \infty \Longleftrightarrow Q< \nu. 
\end{align}
Moreover,  with $q= \max\{ q_1, q_2, 1\}$,
\begin{equation} \label{rhoineq2}
	\rho(\mbt+\mbs)^{\frac 1 q}
	\le \rho(\mbt)^{\frac 1 q} + \rho(\mbs)^{\frac 1 q}, \qquad   \mbt, \mbs \in \R^2,
\end{equation}	
see  \cite[(7.1)]{pils2017}. We shall work on the following assumptions, where $0< \alpha \le 2$ is the stability parameter of limit RFs discussed in the Introduction.

\medskip

\noi {\bf \emph{Assumption} (G)$_\alpha$.} \ The functions $g(\mbt)$ and $g_0(\mbt) = \rho(\mbt)^\chi L(\mbt)$ are as in \eqref{glim}, where $L(\mbt)$ is a generalized invariant function, the parameters $\chi \in \R_0$, $q_i>0$, $i=1,2$, with $Q=\frac{1}{q_1}+\frac{1}{q_2}$ are such that
\begin{eqnarray}\label{qin}
&	- \frac{1}{\alpha} Q< \chi < ( 1-\frac{1}{\alpha} ) Q,
\end{eqnarray}
moreover, $g(\mbt)$ and $g_0(\mbt)$ have partial derivatives in \eqref{partialf} for $\mbt \in \R^2_0$. They satisfy as $|\mbt| \to 0$,
\begin{align}\label{gg0}
	&g(\mbt) = g_0(\mbt)+ o(\rho(\mbt)^\chi), \qquad  \partial_i g(\mbt) = \partial_i g_0(\mbt)+  o(\rho(\mbt)^{\chi - \frac{1}{q_i}} ), \quad i=1,2,\nn\\
	&\qquad\qquad\qquad\partial_{12} g(\mbt) = \partial_{12}g_0(\mbt) +o(\rho(\mbt)^{\chi - Q}),
\end{align}
and for all $\mbt \in \R^2_0$,
\begin{equation}\label{g12bd}
	|g_0 (\mbt)| \le  C\rho(\mbt)^\chi, \qquad |\partial_i g_0 (\mbt)| \le C\rho(\mbt)^{\chi - \frac{1}{q_i}},  \quad i=1,2,  \qquad
	|\partial_{12} g_0(\mbt)| \le C \rho(\mbt)^{\chi - Q}.
\end{equation}

\begin{remark} \label{remkappa}
	In view of \eqref{gg0} the bounds in \eqref{g12bd} extend to $g$ and its derivatives  $\partial_1 g, \partial_2 g, \partial_{12}  g$ in the neighborhood of the origin. We note that the function $g_0(\mbt) = \rho(\mbt)^\chi = (|t_1|^{q_1} + |t_2|^{q_2})^\chi$, corresponding to $L(\mbt) \equiv 1 $ satisfies the bounds in \eqref{g12bd} for all $\boldsymbol{t} \in \R^2_0$ provided $q_i \ge 1$, $i=1,2$. The last fact follows from the expressions of partial derivatives
	\begin{equation*}
		\partial_{i} \rho(\mbt)^\chi = \rho(\mbt)^{\chi - \frac{1}{q_i}} \ell_i(\mbt), \quad i=1,2, \qquad
		\partial_{12} \rho(\mbt)^\chi = \rho(\mbt)^{\chi - Q} \ell_{12}(\mbt)
	\end{equation*}
	with bounded
	\begin{equation*}
		\ell_i(\mbt) := \chi q_i \operatorname{sgn}(t_i) {\textstyle \Big( \frac{|t_i|}{\rho(\mbt)^{\frac{1}{q_i}}} \Big)^{q_i - 1}}, \quad i=1,2, \qquad
		\ell_{12}(\mbt) := \prod_{i=1}^2 (\chi +1-i) q_i \operatorname{sgn}(t_i) {\textstyle \Big( \frac{ |t_i|}{\rho(\mbt)^{\frac{1}{q_i}}} \Big)^{q_i - 1}}.
	\end{equation*}
\end{remark}

\begin{remark}
	Let $\rho_0(\mbt) := |t_1|^{p_1} + |t_2|^{p_2}$, $\mbt \in \R^2$, for $P= \frac{1}{p_1} + \frac{1}{p_2}$ with $p_i$, $i=1,2$, of \eqref{pp}. Using \eqref{rhoineq}, the bounds in \eqref{g12bd} can be respectively replaced by
	\begin{equation}\label{g12bdp}
		|g_0(\mbt)| \le C\rho_0(\mbt)^{P-1}, \qquad |\partial_i g_0(\mbt)| \le
		C\rho_0(\mbt)^{P- \frac{1}{p_i}-1}, \quad i=1,2,   \qquad
		|\partial_{12} g_0(\mbt)| \le C \rho_0(\mbt)^{-1}.
	\end{equation}
\end{remark}

Assumption (G)$_\alpha$ pertains to the behavior of $g$ alone. It is complemented by Assumption (G)$^0_\alpha$ guaranteeing the existence of the associated  RF $X$ in \eqref{def:X}, where infinitely divisible random measure satisfies (M)$_\alpha$.

\medskip

\noi {\bf \emph{Assumption} (G)$^0_\alpha$.} \ For any $\mbt = (t_1,t_2) \in \R^2$, $\delta >0$, the functions $g, g^0_1, g^0_2, g^0_{12} : \R^2 \to \R$ satisfy \begin{equation} \label{G01}
	\int_{\R^2} |g (\mbt - \boldsymbol{u})- g^0_1 (t_1 \mbe_1- \boldsymbol{u})  -  g^0_2 (t_2 \mbe_2 - \boldsymbol{u}) +
	g^0_{12}(-\boldsymbol{u}) |^\alpha \d \mbu <  \infty  \qquad (0 < \alpha \le 2)
\end{equation}
and
\begin{equation} \label{G02}
	\int_{|\mbu| > \delta} \Big( \sum_{i=1}^2 |\partial_i g (\mbu)|^\alpha+ |\partial_{12} g (\mbu)|^\alpha\Big) \d \mbu < \infty
\qquad (1 \le \alpha \le 2).
	\end{equation}
Moreover, if $0 <\alpha < 1$, then
there exist $\delta_0 >0$ and functions $\bar g_i (\mbu)$, $\bar g_{12} (\mbu)$, $\mbu = (u_1,u_2) \in \R^2_+$, 
monotone decreasing in each $u_j >0$, $j=1,2$, and satisfying
$|\partial_i g (\mbu)| \le \bar g_i (|u_1|, |u_2|)$,  $|\partial_{12} g(\mbu)| \le \bar g_{12}(|u_1|,|u_2|)$, $|\mbu| > \delta_0$,
such that \eqref{G02} holds with $\partial_i g$, $\partial_{12}g$ replaced by $\bar g_i, \bar g_{12}$, $i=1,2$.

\medskip

Next we make assumptions about infinitely divisible random measure $M  = \{ M(A), \, A \in {\cal B}_b(\R^2)\}$, where ${\cal B}_b(\R^2)$ denotes the family of all bounded Borel sets.
We recall that the infinitely divisible measure $M$ is such a random process that, for every sequence $\{A_i, \, i \in \N\}$ of pairwise disjoint sets in ${\cal B}_b(\R^2)$, $M(A_i)$, $i \in \N$, are independent random variables (r.v.s), and, if $\cup_{i=1}^\infty A_i \in {\cal B}_b(\R^2)$, then we also have $M(\cup_{i=1}^\infty A_i)  = \sum_{i=1}^\infty M(A_i)$ a.s.
In addition, for every $A \in {\cal B}_b(\R^2)$, $M(A)$ is an infinitely divisible r.v.
We assume that for every $A \in {\cal B}_b(\R^2)$, the characteristic function of $M(A)$ has the form
\begin{equation}\label{def:chfL}
	\E \e^{\i \theta M(A)} =  \exp\Big\{\operatorname{Leb}(A)  \Big(- {\textstyle\frac{1}{2}} \sigma^2 \theta^2 +
	\int_{\R} (\e^{\i \theta y} - 1 - \i \theta \tau_\alpha(y)) \nu (\d y)\Big) \Big\},
	\qquad \theta \in \R,
\end{equation}
where $\sigma^2 \ge 0$, $\nu$ is a L\'evy measure on $\R$ satisfying Assumption (M)$_\alpha$ below and for $y\in\R$,
\begin{equation}
\tau_\alpha(y)
:=
\begin{cases}
y, &1< \alpha \le 2, \\
y \1(|y| \le 1), &\alpha = 1, \\
0,  &0< \alpha < 1.
\end{cases}
\end{equation}
Particularly, when $1< \alpha \le 2 $ we have that
$\int_{\R} |y| \wedge |y|^2  \nu (\d y) < \infty$ and $\E |M(A)| < \infty$, $\E M(A)=0$.

\medskip

\noi {\bf \emph{Assumption} (M)$_\alpha$.} The characteristics $\sigma$, $\nu $ in \eqref{def:chfL} satisfy the following: either
\begin{itemize}
	\item[(i)] $\alpha = 2$, $\sigma >0$ and $\int_{\R} y^2 \nu (\d y) < \infty$, or
\item[(ii)] $ 0< \alpha < 2$, $\sigma =  0 $ and there exist $\lim_{y \downarrow 0} y^\alpha \nu([y,\infty)) = c_+$, $\lim_{y \downarrow 0} y^\alpha \nu((-\infty,-y]) = c_-$ for some $c_\pm \ge 0$, $c_+ + c_- > 0$,  $\sup_{y>0}  y^\alpha \nu(\{u \in \R : |u|>y \}) <  \infty$. Moreover,
if $\alpha =1$, then  $\nu$ is symmetric, i.e.\ $\nu (\d y) = \nu(- \d y)$, $y >0$.
\end{itemize}

\medskip

The above assumption is rather general. For comparison, \cite[Prop.~4.1]{benn2004} consider the truncated stable case $\nu(\d y) = I(|y| \le 1) |y|^{-1-\alpha} \d y $ only, which obviously satisfies (M)$_\alpha$ part (ii) with $c_+ = c_- = \frac 1 \alpha $.
Assumption (M)$_\alpha$ implies that $M$ belongs to  the domain of local attraction of $\alpha$-stable random measure $W_\alpha$ on $\R^2$ with characteristic function
\begin{equation}\label{def:chfW}
	\E \e^{\i \theta W_\alpha(A)} = \e^{-\operatorname{Leb}(A)|\theta|^\alpha \omega_\alpha(\theta)}, \qquad \theta \in \R, \quad A \in
	{\cal B}_b(\R^2),
\end{equation}
where
\begin{equation}
	\omega_\alpha(\theta)
	:=\begin{cases}
		\frac{1}{2}\sigma^2,  &\alpha =2, \\
		\frac{\Gamma(2-\alpha)}{1-\alpha} ((c_+ +  c_-)\cos(\frac{\pi \alpha}{2}) - \i
		(c_+ - c_-)\operatorname{sgn}(\theta) \sin(\frac{\pi \alpha}{2}) ), &0< \alpha < 2, \
\alpha  \neq 1, \\
(c_+ + c_-)\frac{\pi}{2},  &\alpha =1, \ c_+ = c_-.
	\end{cases}
\end{equation}
The last fact can be formulated in terms of local scaling limits of the associated \emph{L\'evy sheet} $\{M(\mbt) := \int_{(\boldsymbol{0}, \mbt]} M(\d \mbu), \, \mbt \in \R^2_+\}$.
Namely, Assumption (M)$_\alpha$ implies that
\begin{equation}\label{LSheet}
	(\lambda_1 \lambda_2)^{-\frac 1 \alpha} M(\lambda_1 t_1, \lambda_2 t_2) \limfdd W_\alpha (\mbt), \qquad \lambda_i \downarrow 0, \quad i=1,2,
\end{equation}
where  $\{W_\alpha(\mbt) := \int_{(\boldsymbol{0}, \mbt]} W_\alpha(\d \mbu), \, \mbt \in \R^2_+\}$ is $\alpha$-stable L\'evy sheet defined by \eqref{def:chfW}.
The fact in \eqref{LSheet} seems to be well-known, see e.g.\ \cite[Prop.~5.1]{bard2011} and also follows from Proposition~\ref{offp} providing a general criterion for weak  convergence of stochastic integrals
w.r.t.\ infinitely divisible random measure $M$ towards that w.r.t.\ $\alpha$-stable random measure $W_\alpha$.
Recall that stochastic integral  $\int_{\R^2} f(\mbu) W_\alpha(\d \mbu) $ is well-defined
for any $f \in L_\alpha(\R^2)$  and has $\alpha$-stable distribution and characteristic function \cite{samo1994}
\begin{equation}\label{chfIW}
	\E \exp\Big\{ \i \theta \int_{\R^2} f(\mbu) W_\alpha(\d \mbu)\Big\} =
	\exp\Big\{ - |\theta|^\alpha \int_{\R^2} |f(\mbu)|^{\alpha} \omega_\alpha(\theta f(\mbu)) \d \mbu \Big\}, \qquad \theta \in \R.
\end{equation}
Also recall that stochastic integral $\int_{\R^2} f(\mbu) M(\d \mbu) $ w.r.t.\ infinitely divisible random measure satisfying Assumption (M)$_\alpha$  is well-defined for any $f \in L_\alpha(\R^2)$ and has an infinitely divisible distribution with characteristic function
\begin{align}\label{chfIM}
\E &\exp \Big\{ \i \theta \int_{\R^2} f(\mbu)  M(\d \mbu) \Big\} \nn\\
		= &\exp \Big\{ \int_{\R^2} \Big(-{\textstyle \frac{1}{2}}\sigma^2 \theta^2 f(\mbu)^2 +
\int_{\R} (\e^{\i \theta f(\mbu)y} - 1 - \i \theta f(\mbu) \tau_\alpha (y)) \nu (\d y) \Big) \d \mbu \Big\}.
\end{align}
To prove the (weak) convergence of stochastic integrals in \eqref{chfIM} towards stable integral in  \eqref{chfIW},
we  use the following proposition.  For any $0  < \alpha \le 2$, $\mu_i  >0$, $i=1,2$, and any function $f = f_\lambda: \R^2 \to \R $  possibly depending on $\lambda >0$, define the re-scaled function $f^\dagger_{\lambda}: \R^{2} \to \R $ by
\begin{equation}\label{tildeg}
	f^\dagger_{\lambda}(\mbu):=\lambda^{\frac{1}{\alpha}(\mu_1+\mu_2)}
	f_\lambda(\lambda^{\mu_1}  u_1, \lambda^{\mu_2} u_2), \qquad \mbu \in \R^{2}.
\end{equation}

\begin{proposition} \label{offp}
	Assume that the infinitely divisible random measure $M$ satisfies (M)$_\alpha$ for some $0 < \alpha \le 2 $. Let $f_{\lambda} \in L_{\alpha}(\R^{2})$, $\lambda >0$. If there exists $h \in L_{\alpha} (\R^{2})$ such that
	\begin{equation}\label{condf}
		\lim_{\lambda \downarrow 0} \|f^\dagger_{\lambda} - h\|_{\alpha} = 0,
	\end{equation}
	for some $\mu_i >0$, $i=1,2$, then
	\begin{equation}\label{Iconvp}
		\int_{\R^2} f_{\lambda}  (\mbu) M(\d \mbu) \limd
\int_{\R^{2}} h(\mbu) W_\alpha(\d \mbu), \qquad \lambda \downarrow 0.
	\end{equation}
\end{proposition}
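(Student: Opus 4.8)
The plan is to prove convergence of the characteristic functions at each frequency and then invoke the Lévy continuity theorem, which reduces the whole statement to the fact that the characteristic exponent of $M$, viewed at a vanishing scale, converges to that of $W_\alpha$ — precisely what Assumption (M)$_\alpha$ encodes. By \eqref{chfIM}, for $\theta\in\R$,
\[
	\log\E\exp\Big\{\i\theta\int_{\R^2}f_\lambda(\mbu)\,M(\d\mbu)\Big\}=\int_{\R^2}\Psi\big(\theta f_\lambda(\mbu)\big)\,\d\mbu,\qquad \Psi(s):=-\tfrac12\sigma^2 s^2+\int_\R\big(\e^{\i sy}-1-\i s\tau_\alpha(y)\big)\,\nu(\d y).
\]
Changing variables $\mbu=(\lambda^{\mu_1}v_1,\lambda^{\mu_2}v_2)$ and putting $\beta=\beta_\lambda:=\lambda^{\frac1\alpha(\mu_1+\mu_2)}\downarrow0$, one has $f_\lambda(\lambda^{\mu_1}v_1,\lambda^{\mu_2}v_2)=\beta^{-1}f^\dagger_\lambda(\mbv)$ by \eqref{tildeg} and $\d\mbu=\beta^\alpha\,\d\mbv$, so the right-hand side equals $\int_{\R^2}\Psi_{\beta_\lambda}\big(\theta f^\dagger_\lambda(\mbv)\big)\,\d\mbv$ with the rescaled exponent $\Psi_b(s):=b^\alpha\Psi(s/b)$. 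Since $\|f^\dagger_\lambda\|_\alpha=\|f_\lambda\|_\alpha<\infty$ and $\|f^\dagger_\lambda\|_\alpha\to\|h\|_\alpha$, and since by \eqref{chfIW} the target log-characteristic function is $\int_{\R^2}\Psi_0\big(\theta h(\mbv)\big)\,\d\mbv$ with $\Psi_0(s):=-|s|^\alpha\omega_\alpha(s)$, it suffices to show $\int_{\R^2}\Psi_{\beta_\lambda}(\theta f^\dagger_\lambda)\,\d\mbv\to\int_{\R^2}\Psi_0(\theta h)\,\d\mbv$ for each fixed $\theta$, and then apply the Lévy continuity theorem (the limit is the characteristic function of the stable integral $\int h\,\d W_\alpha$, hence continuous at $0$).

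The argument hinges on two analytic facts about $\Psi_b$. The first is the \emph{pointwise limit} $\lim_{b\downarrow0}\Psi_b(s)=\Psi_0(s)=-|s|^\alpha\omega_\alpha(s)$ for every $s\in\R$. For $\alpha=2$ this is dominated convergence against $\nu(\d y)$: the compensated small-jump term $b^2\int_\R(\e^{\i sy/b}-1-\i sy/b)\,\nu(\d y)$ has integrand dominated by $\tfrac12 s^2y^2\in L_1(\nu)$ and tending to $0$ pointwise, so it vanishes and only $-\tfrac12\sigma^2 s^2=-|s|^2\omega_2(s)$ remains. For $0<\alpha<2$ (so $\sigma=0$) one substitutes $z=y/b$ and uses that (M)$_\alpha$(ii) makes the rescaled Lévy measure $b^\alpha\nu(b\,\cdot)$ converge vaguely on $\R_0$ to the $\alpha$-stable Lévy measure with tail masses $c_\pm$, the uniform bound $\sup_{y>0}y^\alpha\nu(\{|u|>y\})<\infty$ supplying the tightness to pass to the limit; for $\alpha=1$ the symmetry of $\nu$ renders the truncation in $\tau_1$ immaterial, and the classical Lévy–Khintchine computation $\int_\R(\e^{\i sz}-1-\i s\tau_\alpha(z))\,\nu_\alpha(\d z)=-|s|^\alpha\omega_\alpha(s)$ for the limiting stable measure pins down the constant $\omega_\alpha$ of \eqref{def:chfW}. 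The second, more delicate, fact is a \emph{scale-uniform estimate}: there are $C,b_0>0$ such that $|\Psi_b(s)|\le C|s|^\alpha$ for all $s\in\R$, $0<b\le b_0$, together with a scale-uniform modulus of continuity making $s\mapsto\Psi_b(s)$ equi-Hölder on bounded sets — of order $\alpha$ when $0<\alpha<1$, of the form $|\Psi_b(s)-\Psi_b(s')|\le C(|s-s'|^\alpha+|s-s'|(|s|+|s'|)^{\alpha-1})$ when $1<\alpha\le2$, and with an extra logarithmic factor $\log^+\!\frac{|s|+|s'|}{|s-s'|}$ in the borderline case $\alpha=1$. These are obtained by splitting the jump integral at $|y|=b$ and, after the substitution $z=y/b$, at $|z|=1/|s-s'|$ (and at a second intermediate scale when $\alpha=1$), using $|\e^{\i x}-1-\i\tau_\alpha(x)|\le C(|x|^2\wedge|x|)$ (and the symmetry of $\nu$ to discard the centering when $\alpha=1$) together with the consequences of (M)$_\alpha$: $\nu(\{|u|>\delta\})\le C\delta^{-\alpha}$ for all $\delta>0$, $\int_{|y|\le\delta}y^2\,\nu(\d y)\le C\delta^{2-\alpha}$, $\int_{|y|\le\delta}|y|\,\nu(\d y)\le C\delta^{1-\alpha}$ for $\alpha<1$, and $\int_{|y|>\delta}|y|\,\nu(\d y)\le C\delta^{1-\alpha}$ for $\alpha>1$; the Gaussian part (only for $\alpha=2$) contributes $\tfrac12\sigma^2|s^2-s'^2|$, of the claimed form.

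Granting these, write
\[
	\Big|\int_{\R^2}\Psi_{\beta_\lambda}(\theta f^\dagger_\lambda)-\int_{\R^2}\Psi_0(\theta h)\Big|\le\int_{\R^2}\big|\Psi_{\beta_\lambda}(\theta f^\dagger_\lambda)-\Psi_{\beta_\lambda}(\theta h)\big|+\int_{\R^2}\big|\Psi_{\beta_\lambda}(\theta h)-\Psi_0(\theta h)\big|.
\]
The second term tends to $0$ by ordinary dominated convergence: the integrand converges to $0$ pointwise by the pointwise limit and is dominated by $C|\theta h(\mbv)|^\alpha\in L_1(\R^2)$ by the uniform bound. For the first term, the scale-uniform modulus of continuity and Hölder's inequality give, with $\|f^\dagger_\lambda\|_\alpha,\|h\|_\alpha$ bounded,
\[
	\int_{\R^2}\big|\Psi_{\beta_\lambda}(\theta f^\dagger_\lambda)-\Psi_{\beta_\lambda}(\theta h)\big|\,\d\mbv\le C|\theta|^\alpha\|f^\dagger_\lambda-h\|_\alpha^\alpha+C|\theta|^\alpha\,\|f^\dagger_\lambda-h\|_\alpha\big(\|f^\dagger_\lambda\|_\alpha+\|h\|_\alpha\big)^{\alpha-1}
\]
for $\alpha>1$ (and just the first summand for $\alpha<1$; for $\alpha=1$ the logarithmic modulus is absorbed via the elementary inequality $t\log^+(1/t)\le\sqrt t$, reducing it to $\|f^\dagger_\lambda-h\|_1^{1/2}$ times a bounded factor). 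In all cases this tends to $0$ since $\|f^\dagger_\lambda-h\|_\alpha\to0$. Hence $\int_{\R^2}\Psi_{\beta_\lambda}(\theta f^\dagger_\lambda)\,\d\mbv\to\int_{\R^2}\Psi_0(\theta h)\,\d\mbv$ for every $\theta$, and \eqref{Iconvp} follows.

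The step I expect to be the real obstacle is the scale-uniform estimate $|\Psi_b(s)|\le C|s|^\alpha$ and its increment version, with $C$ independent of the scaling parameter $b$: the bound must hold simultaneously for all $s$ and all small $b$, which forces one to track how the mass of the rescaled Lévy measure $b^\alpha\nu(b\,\cdot)$ sits relative to the cut-off $1/|s|$, and to handle the compensator $\tau_\alpha$ carefully in the regimes $1<\alpha<2$ and $\alpha=1$. Once this is in place, the rest is the classical domain-of-attraction computation packaged by Assumption (M)$_\alpha$.
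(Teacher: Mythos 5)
Your proof is correct, and it reaches the conclusion by a genuinely different technical route than the paper, even though both arguments run through convergence of characteristic functions and the same domain-of-attraction content of (M)$_\alpha$. The paper first reduces (``w.l.o.g.'') to a.e.\ convergence of $f^\dagger_\lambda$, then treats $\alpha=2$, $1<\alpha<2$, $0<\alpha<1$, $\alpha=1$ separately: in each case it integrates by parts in $y$ to express the exponent through the tail functions $\bar\nu_\pm$, obtains pointwise convergence of the composite integrands $\Phi_\lambda(\theta f^\dagger_\lambda(\mbu),y)$ together with a dominating envelope, and concludes by Pratt's lemma (with an additional $K$-truncation when $\alpha\le 1$). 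You instead isolate the rescaled exponent $\Psi_b(s)=b^\alpha\Psi(s/b)$ and split the error by the triangle inequality into a ``kernel'' part $\int|\Psi_{\beta_\lambda}(\theta f^\dagger_\lambda)-\Psi_{\beta_\lambda}(\theta h)|$ and a ``measure'' part $\int|\Psi_{\beta_\lambda}(\theta h)-\Psi_0(\theta h)|$; the second is plain dominated convergence under the uniform bound $|\Psi_b(s)|\le C|s|^\alpha$, and the first is controlled by your scale-uniform H\"older-type increment estimates for $\Psi_b$ plus H\"older's inequality, so that $\|f^\dagger_\lambda-h\|_\alpha\to 0$ is used directly, with no passage to a.e.-convergent subsequences. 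I checked the key estimates you flag as the main obstacle: the uniform bound and the increment bounds (including the logarithmic modulus at $\alpha=1$ and its absorption via $t\log^+(1/t)\le C\sqrt t$) do follow from the splitting at $|y|=b/|s|$ (resp.\ $b/|s-s'|$) and the moment consequences of the tail condition in (M)$_\alpha$ that you list, and the pointwise limit $\Psi_b\to\Psi_0$ is the standard stable-attraction computation (the paper likewise delegates the identification of $\omega_\alpha$ to Ibragimov--Linnik). What your route buys is a unified treatment of all $\alpha$ (case distinctions enter only through the form of the modulus of continuity), avoidance of Pratt's lemma, of the a.e.-convergence reduction, and of the truncation bookkeeping for $\alpha\le 1$; in effect you prove a quantitative equicontinuity of $f\mapsto\int\Psi_b(\theta f)$ on $L_\alpha$, uniform in the scale. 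What the paper's route buys is that it needs only pointwise limits and a dominating envelope for the tail-function integrands, with no increment estimates on the exponent at all. To turn your outline into a complete write-up you would only need to display the verification of the increment bounds in the three regimes, since everything else is standard.
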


\begin{proof}
	It suffices to prove the convergence of characteristic functions: $ \E \exp \{\i \theta\int_{\R^2} f_{\lambda}  (\mbu) M(\d \mbu) \}=:C_\lambda(\theta) \to C_0(\theta) := \E \exp \{\i \theta\int_{\R^2} h (\mbu) W_\alpha (\d \mbu) \}$, $\forall \theta \in \R$, as given in \eqref{chfIM}, \eqref{chfIW}.
	W.l.g., assume that $f^\dagger_{\lambda}(\mbu)  \to  h(\mbu)$ for a.e.\ $\mbu \in \R^2$.
	Split $M(\d \mbu) = M_1(\d \mbu) + M_2(\d \mbu)$, where $M_1$ (respectively, $M_2$) is infinitely divisible random measure with characteristics $(0, \nu)$ (respectively, $(\sigma, 0)$) and $M_1$, $M_2$ are independent.
	Accordingly, $C_\lambda(\theta) = C_{\lambda,1} (\theta)C_{\lambda,2}(\theta)$, where $C_{\lambda,i}(\theta) :=  \E \exp \{\i \theta\int_{\R^2} f_{\lambda}  (\mbu) M_i(\d \mbu) \}$, $i=1,2$. The subsequent proof is split into four parts depending on the value of $\alpha$. Let
\begin{equation*}
\tilde \lambda := \lambda^{\mu_1 + \mu_2}, \qquad \Psi_\alpha (y) := \e^{\i y} - 1 - \i  \tau_\alpha(y) \quad (0 < \alpha \le 2, \ y \in \R).
\end{equation*}

\noi Case $\alpha =2$. Then 
$\sigma >0$ and
$C_{\lambda,2}(\theta) \to C_0(\theta) $ is immediate from \eqref{condf}, hence, \eqref{Iconvp} follows from $C_{\lambda,1}(\theta) \to 1$.
We have $C_{\lambda,1} (\theta) = \e^{I_\lambda}$, where $I_\lambda = \int_{\R^2\times \R} \Psi_2 (\theta f_\lambda (\mbu) y) \d \mbu \nu (\d y) = \tilde \lambda \int_{\R^2\times \R} \Psi_2 (\theta \tilde \lambda^{-\frac 1 2} f^\dagger_\lambda (\mbu) y ) \d \mbu \nu (\d y)$. By Pratt's lemma \cite{prat1960}, $I_\lambda= o(1)$. Indeed, using $|\Psi_2 (y)| \le \min\{2|y|, \frac{1}{2}|y|^2\}$ gives
	$\tilde \lambda |\Psi_2 (\theta \tilde \lambda^{-\frac 1 2} f^\dagger_\lambda (\mbu) y ) |
	\le C \tilde \lambda^{\frac 1 2} |f^\dagger_\lambda (\mbu) y| = o(1)$ for $y \in \R $, a.e.\ $\mbu \in \R^2 $ such that $f^\dagger_\lambda (\mbu) \to h(\mbu)$, moreover, $\tilde \lambda |\Psi_2 (\theta \tilde \lambda^{-\frac 1 2} f^\dagger_\lambda (\mbu) y ) | \le C |f^\dagger_\lambda (\mbu) y|^2$  with $\int_{\R^2\times \R} |f^\dagger_\lambda (\mbu) y|^2 \d \mbu \nu (\d y) \to \int_{\R^2\times \R} |h(\mbu) y|^2 \d \mbu \nu(\d y)$, proving \eqref{Iconvp} for $\alpha = 2$.

\smallskip

\noi Case $1< \alpha <2$. 	
		In this case $\sigma  = 0$.
Let $\bar \nu_+(y) := \nu ([y, \infty))$, $\bar \nu_-(y) := \nu((-\infty, -y])$, $y >0$.
Then $C_\lambda (\theta) = \exp \{\tilde \lambda \int_{\R^2\times \R} \Psi_\alpha (\theta \tilde \lambda^{- \frac 1 \alpha} f^\dagger_\lambda (\mbu) y ) \d \mbu \nu (\d y) \} =: \e^{I_\lambda}$, where $I_\lambda = \int_{\R^2 \times \R} \Phi_\lambda (\theta f^\dagger_\lambda (\mbu), y) \d \mbu \d y$  follows integrating by parts with
\begin{eqnarray*}%\label{Phila}
&\Phi_\lambda(u, y) := \tilde \lambda \bar \nu_{\operatorname{sgn}(y)}
\Big( \frac{ {\tilde \lambda}^{\frac{1}{\alpha}} |y|} {| u |} \Big)
(\e^{\i \operatorname{sgn}(u) y} -1 ) \i \operatorname{sgn}(u y)
\\
&\to |u|^\alpha  \frac{  c_{\operatorname{sgn} (y)} }{|y|^\alpha} (\e^{\i \operatorname{sgn}(u)  y} -1 ) \i  \operatorname{sgn}(u y) =: \Phi_0(u,y),\nn
\end{eqnarray*}
hence, $\Phi_\lambda(\theta f^\dagger_\lambda(\mbu), y) \to \Phi_0 (\theta h(\mbu), y)$ for any $(\mbu, y) \in \R^2 \times \R $ such that $f^\dagger_\lambda(\mbu) \to h(\mbu)$.
Therefore we can expect that
\begin{equation}\label{Jconv}
I_\lambda \to \int_{\R^2 \times \R} \Phi_0 (\theta h(\mbu), y) \d \mbu \d y
=  - |\theta|^\alpha \int_{\R^2} |h(\mbu)|^\alpha \omega_\alpha (\theta h(\mbu)) \d \mbu,
\end{equation}
where the last equality follows from \cite[proof of Thm.~2.2.2]{ibra1971}.
To justify convergence of the integrals in \eqref{Jconv}, by $|\bar \nu_\pm (y)| \le C y^{-\alpha}$, $y >0$, we have $|\Phi_\lambda(u, y)| \le C |u|^\alpha  |y|^{-\alpha} \min\{|y|,1\} =: \bar \Phi(u,y)$, implying $\Phi_\lambda(\theta f^\dagger_\lambda (\mbu), y) \le \bar \Phi(\theta f^\dagger_\lambda (\mbu),y)$, where $\bar \Phi(\theta f^\dagger_\lambda (\mbu),y) \to \bar \Phi(\theta h (\mbu),y)$ for a.e.\ $(\mbu, y) \in \R^2 \times \R $ and $\int_{\R^2 \times \R} \bar \Phi(\theta f^\dagger_\lambda (\mbu)) \d \mbu \d y \to \int_{\R^2 \times \R} \bar \Phi(\theta h(\mbu),y) \d \mbu \d y < \infty$ so that \eqref{Jconv} follows by Pratt's  lemma as in the case $\alpha =2 $ above.
Hence, $C_\lambda(\theta) \to C_0(\theta)$ $(\forall \theta \in \R)$.

\smallskip
	
\noi Case $0<\alpha < 1$. Then
		$C_\lambda (\theta) =  \exp\{ \int_{\R^2} J_\lambda (\theta f^\dagger_\lambda (\mbu)) \d \mbu\}$, where
		for a large $K>0$ we split $J_\lambda (u)= J_{\lambda,K,0} (u)+ J_{\lambda,K,1}(u)$ with
		\begin{equation*}\label{Jsplit}
		J_{\lambda,K,0} (u) :=
		\tilde \lambda
		\int_{|y| \le \frac{\tilde \lambda^{\frac{1}{\alpha}}K}{|u|}}
		(\e^{\i \tilde \lambda^{-\frac{1}{\alpha}} uy} -1) \nu (\d y),\quad
		J_{\lambda,K,1} (u) :=
		\tilde \lambda
		\int_{|y| > \frac{\tilde \lambda^{\frac{1}{\alpha}}K}{|u|}}
		(\e^{\i \tilde \lambda^{-\frac{1}{\alpha}} uy} -1) \nu (\d y).
		\end{equation*}		
		Integrating by parts
we rewrite $J_{\lambda,K,0} (u) =  \Phi_\lambda (u,K) - \Phi_\lambda (u,-K) - \int_{|y| \le K} \phi_\lambda (u,y) \d y$ with
		\begin{eqnarray*}
		&\Phi_\lambda (u,y) :=
		- \tilde \lambda \bar \nu_{\operatorname{sgn}(y)} \Big(\frac{\tilde \lambda^{\frac{1}{\alpha}} |y|}{|u|} \Big)
		\operatorname{sgn}(y) (\e^{\i \operatorname{sgn}(u) y}- 1),\\
		&\phi_\lambda (u,y) :=
		- \tilde \lambda \bar \nu_{\operatorname{sgn}(y)} \Big(\frac{\tilde \lambda^{\frac{1}{\alpha}} |y|}{|u|} \Big)
		\i \operatorname{sgn}(u y) \e^{\i \operatorname{sgn}(u) y}.
		\end{eqnarray*}			
		In the corresponding decomposition of $I_{\lambda,K,0} := \int_{\R^2} J_{\lambda,K,0}(\theta f^\dagger_\lambda(\mbu)) \d \mbu$ we use  $\Phi_\lambda (\theta f^\dagger_\lambda (\mbu),y) \to \Phi_0 (\theta h(\mbu),y)$ and $\phi_\lambda (\theta f^\dagger_\lambda (\mbu),y) \to \phi_0 (\theta h(\mbu),y)$ for any $(\mbu,y) \in \R^2 \times \R$ such that $f^\dagger_\lambda (\mbu) \to h(\mbu)$, where
		\begin{eqnarray*}
		\Phi_0 (u,y) &:= - |u|^\alpha \frac{c_{\operatorname{sgn}(y)}}{|y|^\alpha} \operatorname{sgn}(y) (\e^{\i \operatorname{sgn}(u) y}- 1),\qquad
		\phi_0 (u,y) := - |u|^\alpha \frac{c_{\operatorname{sgn}(y)}}{|y|^\alpha}  \i \operatorname{sgn}(uy) \e^{\i \operatorname{sgn}(u) y}.
		\end{eqnarray*}
		Arguing as in the proof of \eqref{Jconv}, we obtain $I_{\lambda,K,0} \to I_{K,0} := \int_{\R^2} J_{K,0} (\theta h(\mbu)) \d \mbu$ with
		\begin{equation*}
		J_{K,0} (u) := \Phi_0(u,K)- \Phi_0(u,-K) - \int_{|y|\le K} \phi_0(u,y) \d y
		= |u|^\alpha \int_{|y| \le K} {\textstyle\frac{\alpha c_{\operatorname{sgn}(y)}}{|y|^{\alpha+1}} (\e^{\i \operatorname{sgn} (u) y} - 1)} \d y,
		\end{equation*}
		where $I_{K,0} \to - |\theta|^\alpha \int_{\R^2} |h(\mbu)|^\alpha \omega_\alpha (\theta h(\mbu)) \d \mbu$ as $K \to \infty$. 		
		The fact that $\int_{\R^2} J_{\lambda,K,1}(\theta f^\dagger_\lambda (\mbu)) \d \mbu =: I_{\lambda,K,1}$ is negligible follows from
		$|I_{\lambda,K,1}| \le  2 \tilde \lambda \int_{\R^2} \nu ( |y| > \frac{\tilde \lambda^{\frac{1}{\alpha}} K} {|\theta f^\dagger_\lambda(\mbu)|} ) \d \mbu
		\le \frac{C}{K^{\alpha}} \int_{\R^2} |\theta f^\dagger_\lambda(\mbu)|^\alpha \d \mbu
		\le \frac{C}{K^{\alpha}}$, by choosing $K$ large enough.
		This proves $C_\lambda(\theta) \to C_0(\theta)$ $(\forall \theta \in \R)$ in case $0< \alpha < 1 $.

\smallskip

\noi Case $\alpha =1$.  By symmetry of $\nu$ and $W_1$,
$ C_\lambda (\theta) = \e^{2I_\lambda}, C_0(\theta)
= \e^{2 I_0}$, where
\begin{align*}
&I_\lambda := \tilde \lambda \int_{\R^2 \times \R_+} (\cos(\theta f^\dagger_\lambda(\mbu) \tilde \lambda^{-1} y) - 1) \d \mbu \nu (\d y), \\
&I_0 :=  c_+ \int_{\R^2 \times \R_+} (\cos(\theta h(\mbu) y) - 1) y^{-2} \d \mbu \d y = - c_+ |\theta| \| h \|_1 J,
\end{align*}
where $J := \int_0^\infty (1-\cos y) y^{-2} \d y = \int_0^\infty \frac{\sin y}{y} \d y = \frac{\pi}{2}$. 
Similarly as in the case $0<\alpha < 1$, for a large $K>0$ split $I_\lambda = I_{\lambda,K,0} + I_{\lambda,K,1}$, where
\begin{equation*}
I_{\lambda,K,0}
:= \tilde \lambda \int_{\R^2 \times \big\{0< y \le \frac{\tilde \lambda K}{|\theta f^\dagger_\lambda(\mbu)|}\big\}} \dots, \qquad
I_{\lambda,K,1} := \tilde \lambda \int_{\R^2 \times \big\{y> \frac{\tilde \lambda K}{|\theta f^\dagger_\lambda(\mbu)|}\big\}} \dots.
\end{equation*}
Then integrating by parts and using (M)$_1$, Pratt's lemma similarly as in the case $0<  \alpha < 1 $ we obtain
\begin{equation*}
I_{\lambda,K,0}
\to c_+\int_{\R^2} |\theta h(\mbu) | \d \mbu \int_0^K (\cos y -1)
y^{-2} \d y  =: I_{K,0},
\end{equation*}
where $I_{K,0} \to I_0$ $(K \to \infty)$ while $I_{\lambda,K,1}$ can be made arbitrarily small uniformly in $\lambda >0$ by choosing $K  >0$ large enough.
Proposition \ref{offp} is proved.
\end{proof}

\begin{remark} \label{rem3}
	We have $\lambda^{-\frac{1}{\alpha}(1+ \gamma)} M(\lambda t_1, \lambda^\gamma t_2)= \int_{\R^2} f_\lambda (\mbu) M(\d \mbu) $ with $f_\lambda (\mbu) :=  \lambda^{-\frac{1}{\alpha}(1+ \gamma)}  I(\mbu \in (0, \lambda t_1] \times (0, \lambda^\gamma t_2])$.
	Then if $\mu_1 = 1$, $\mu_2 = \gamma $ the corresponding re-scaled function in  \eqref{tildeg} $f^\dagger_\lambda (\mbu) =   I(\mbu \in (\0, \mbt]) $ does not depend on $\lambda $ and trivially satisfies \eqref{condf} with $h (\mbu) =   I(\mbu \in (\0, \mbt])$.
	Accordingly by Proposition  \ref{offp}  the convergence in \eqref{LSheet} holds when $\lambda_i$, $i=1,2$, tend to zero as  $\lambda_1 = \lambda$, $\lambda_2 = \lambda^\gamma$.
	Actually Proposition  \ref{offp} extends to more general scaling by $\lambda_i \downarrow 0$, $i=1,2$, as in \eqref{LSheet} but for our purposes the scaling in \eqref{tildeg} suffices.
\end{remark}

\section{Examples of fractional L\'evy driven RFs}
\label{sec3}

In this section we discuss three examples of L\'evy driven fractional RFs related to fractional powers of classical partial differential operators. Fractional operators and equations naturally appear
in the study of fractional RFs with LRD or negative dependence,  
see  \cite{kelb2005, leo2011} and references therein.
In each of these examples Assumptions (G)$_\alpha$ and (G)$^0_\alpha$ are verified and Theorems   \ref{mainthm} and \ref{mainthm2} apply in respective parameter regions.

\medskip

\noi {\bf 3.1 \ Fractional L\'evy RF.}
\ Let Assumption (M)$_\alpha$ hold and $g(\boldsymbol{t}) = \norm \boldsymbol{t}\norm^{H-\frac{2}{\alpha}}$, $g^0_i(\mbt) = 0$, $i=1,2$, $g^0_{12}(\mbt) = - \norm \boldsymbol{t}\norm^{H-\frac{2}{\alpha}}$ for all $\mbt \in \R^2_0$ and some $H \in (0,1)$, $\alpha \in (0,2]$.
The corresponding L\'evy driven  RF $X$ takes the form
\begin{equation}\label{XLevy}
	X(\boldsymbol{t}) :=  \int_{\R^2} \{ \norm \mbt- \mbu \norm^{H-\frac{2}{\alpha}} -  \norm \mbu \norm^{H-\frac{2}{\alpha}} \}
	M(\d \boldsymbol{u}),  \qquad \boldsymbol{t}  \in \R^2.
\end{equation}
If $\E M(\d \mbu)^2 = \sigma^2_M \d \mbu$ and $\alpha=2$ in the integrand, then the covariance function of $X$ in \eqref{XLevy} is given by
\begin{eqnarray}
	&\E X(\mbt) X(\mbs) = \E |X(\mbe_1)|^2 \frac{1}{2} (\norm \mbt \norm^{2H} + \norm \mbs \norm^{2H} - \norm \mbt - \mbs \norm^{2H}), \qquad \mbt, \mbs \in \R^2,
\end{eqnarray}		
where $\E |X(\mbe_1)|^2 = \sigma_M^2 \int_{\R^2} (\norm \mbe_1 + \mbu \norm^{H-1} - \norm \mbu \norm^{H-1})^2 \d \mbu < \infty$.
If $M  = W_\alpha$ is $\alpha$-stable random measure, then RF $X$ in \eqref{XLevy} is called a moving average fractional stable RF (a fractional Brownian RF in the case $\alpha=2$), see \cite{benn2004, cohen2012, take1991}.
Fractional L\'evy RF can be defined on arbitrary $\R^d$, $d \ge 1$.
See the review paper \cite{lodh2016} and the numeruous references therein on various mathematical and probabilistic aspects of the fractional Brownian RF, including extension to arbitrary $H \in \R$ (as a generalized RF or random tempered distribution) and relation to fractional powers of the Laplace operator.
Remark \ref{remkappa} shows that the above $g$ verifies Assumption (G)$_\alpha$ with $g_0 = g$, $q_1 = q_2 =2$, $Q=1$, $\chi = \frac{H}{2}- \frac{1}{\alpha} \in (-\frac{1}{\alpha},\frac{1}{2}- \frac{1}{\alpha}).$
Moreover, parameters in \eqref{pp} of the limiting kernel satisfy: $p_1 = p_2 = 2-H+\frac{2}{\alpha}$ with $\frac{\alpha}{1+\alpha} < P < \frac{2\alpha}{2+\alpha}$, $P_{\frac{1+\alpha}{\alpha},\frac{1}{\alpha}} = P_{\frac{1}{\alpha}, \frac{1+\alpha}{\alpha}} <1$.

\medskip

\noi {\bf 3.2 \ Isotropic fractional Laplace or Mat\'ern RF.}
\ Let
\begin{eqnarray}\label{gEx2mod}
&g(\boldsymbol{t}) := \frac{2^{1+\chi}}{c^{2\chi} \Gamma (-\chi)}
\{
(c \norm \boldsymbol{t} \norm)^{\chi} K_{\chi} (c\norm \boldsymbol{t}\norm ) - I (\chi >0) \Gamma(\chi) 2^{\chi-1}
\},
\qquad \mbt \in \R^2_0,
\end{eqnarray}
where $c >0$, $\chi \in ({\textstyle -\frac{1}{\alpha}},1-{\textstyle\frac{1}{\alpha}}) \setminus \{0\}$ and $K_{\chi}$ denotes the modified Bessel function of the second kind.
Using $K_{\chi} (t) = K_{-\chi} (t) \sim \Gamma(\chi) 2^{\chi-1} t^{-\chi}$ as $t \downarrow 0$ ($\chi>0$), see \cite[9.6.9, p.375]{abra1972}, we see that $g(\mbt) \sim \norm \mbt\norm^{2\chi}$ $(\chi <0)$, $g(\mbt) \to 0$ $(\chi >0)$ as $\norm \mbt \norm \to 0$; moreover, for $2 \chi = H-\frac{2}{\alpha} <0$ we have $\lim_{c \downarrow 0} g(\mbt) = \norm \mbt \norm^{H-\frac{2}{\alpha}} $ hence $g(\mbt)$ of Example 3.1 can be regarded as the limiting case of \eqref{gEx2mod} when $c \downarrow 0$.
Let Assumption (M)$_\alpha$ hold for $\alpha \in (0, 2]$ and
\begin{equation}\label{XLevy2}
X(\mbt) := \int_{\R^2} \{ g(\mbt - \mbu) - I (\chi > 0) g(-\mbu) \} M(\d \mbu), \qquad \mbt \in \R^2.
\end{equation}
Clearly \eqref{XLevy2} is a particular case of \eqref{def:X} corresponding to $g^0_1(\mbt) = g^0_2(\mbt) = g^0_{12}(\mbt) = 0$ $(\chi <0)$ and $g^0_1(\mbt) = g^0_2(\mbt) = 0$, $g^0_{12}(\mbt) = - g(\mbt)$ $(\chi >0)$.

\begin{proposition}
	For any $\alpha \in (0,2]$ the kernel $g(\mbt) $ in \eqref{gEx2mod} satisfies Assumption (G)$_\alpha$ with
	\begin{equation}\label{Ex2g0}
		g_0(\mbt) := \norm \mbt \norm^{2\chi}, \qquad \mbt \in \R^2_0,
	\end{equation}
	$q_1 = q_2 = 2, Q=1, $ and $\chi $ as in \eqref{gEx2mod}, or $p_1 = p_2 = 2(1-\chi)$, $P \in (\frac{\alpha}{1 + \alpha}, \alpha)$, $P \neq 1$.
	Moreover, the integrand in  \eqref{XLevy2} satisfies Assumption (G)$^0_\alpha$.
	As a consequence, the RF $X$ is  \eqref{XLevy2} is well-defined for any $M$ satisfying Assumption  (M)$_\alpha$.
\end{proposition}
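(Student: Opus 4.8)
The plan is to derive Assumptions (G)$_\alpha$ and (G)$^0_\alpha$ from two classical facts about the modified Bessel function: its small-argument series expansion and its exponential decay at infinity. First I would record the local behaviour of $g$. Writing $z:=c\norm\mbt\norm$ and combining $K_\chi=K_{-\chi}$ with the series for $I_{\pm\chi}$ (see \cite[9.6.2, 9.6.9--9.6.11]{abra1972}), one gets that $z^\chi K_\chi(z)$ equals a power series in $z^2$ with constant term $\Gamma(\chi)2^{\chi-1}$ plus $z^{2\chi}$ times a power series in $z^2$; using the normalising constant in \eqref{gEx2mod} and the reflection formula $\chi\Gamma(-\chi)=-\Gamma(1-\chi)$, the leading coefficient of the second group is exactly $1$. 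Thus subtracting $I(\chi>0)\Gamma(\chi)2^{\chi-1}$ removes the constant term precisely in the case $\chi>0$ (where one also sets $g(\0):=0$, making $g$ continuous), and in every case $g(\mbt)$ equals $\norm\mbt\norm^{2\chi}$ plus a remainder built from strictly larger powers of $\norm\mbt\norm$ (with a $\log\norm\mbt\norm$ correction when $\chi$ is a negative integer, which by the range of $\chi$ in \eqref{gEx2mod} can only happen for $\alpha<1$). Since $\chi<1-\tfrac1\alpha<1$, that remainder is $o(\norm\mbt\norm^{2\chi})$, so $g_0(\mbt)=\norm\mbt\norm^{2\chi}=\rho(\mbt)^\chi$ with $q_1=q_2=2$, $Q=1$, $L\equiv1$, as in \eqref{glim}, \eqref{Ex2g0}; and using \eqref{pp} with $p_1=p_2=2(1-\chi)$, $P=1/(1-\chi)$, the range of $\chi$ in \eqref{gEx2mod} is equivalent to $\frac{\alpha}{1+\alpha}<P<\alpha$, $P\ne1$ (cf.\ \eqref{pin}) by a one-line computation.

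Next I would handle the derivatives. Since $g(\mbt)=\varphi(\norm\mbt\norm)$ with $\varphi$ smooth on $(0,\infty)$, the chain rule gives $\partial_ig(\mbt)=\varphi'(\norm\mbt\norm)\,t_i/\norm\mbt\norm$ and $\partial_{12}g(\mbt)=\psi(\norm\mbt\norm)\,t_1t_2/\norm\mbt\norm^2$ with $\psi(r):=\varphi''(r)-\varphi'(r)/r$, and the expansion of the previous paragraph may be differentiated term by term. The leading terms are the derivatives of $\norm\mbt\norm^{2\chi}$, which are of orders $\rho(\mbt)^{\chi-1/2}=\rho(\mbt)^{\chi-1/q_i}$ and $\rho(\mbt)^{\chi-1}=\rho(\mbt)^{\chi-Q}$ respectively, and the remaining contributions (including the logarithmic one when present) are again $o$ of these --- here the bound $\chi<1$ is used once more. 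This is exactly \eqref{gg0}; the global bounds \eqref{g12bd} for $g_0=\norm\mbt\norm^{2\chi}$ hold by Remark~\ref{remkappa} since $q_1=q_2=2\ge1$ and $\rho(\mbt)^\chi=\norm\mbt\norm^{2\chi}$, and the local bound for $g$ itself follows likewise. This establishes Assumption (G)$_\alpha$.

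For Assumption (G)$^0_\alpha$ the key remark is that away from the origin $g$ and all its derivatives decay exponentially (one differentiates the asymptotics $K_\chi(z)\sim\sqrt{\pi/(2z)}\,\e^{-z}$). When $\chi<0$ the integrand in \eqref{G01} is $g(\mbt-\mbu)$, so \eqref{G01} reduces to $\int_{\R^2}|g|^\alpha<\infty$; near the origin $|g(\mbv)|\le C\norm\mbv\norm^{2\chi}$ makes $\int_{\norm\mbv\norm\le1}|g|^\alpha<\infty$ because $2\chi\alpha>-2$, i.e.\ $\chi>-\tfrac1\alpha$, and the tail is handled by exponential decay. When $\chi>0$, $g$ is continuous, bounded and exponentially decaying, hence $\|g\|_\alpha<\infty$ and $|g(\mbt-\mbu)-g(-\mbu)|^\alpha\le C(|g(\mbt-\mbu)|^\alpha+|g(-\mbu)|^\alpha)$ gives \eqref{G01}. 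For \eqref{G02}, exponential decay of $\partial_ig$, $\partial_{12}g$ gives finiteness for every $\alpha\in(0,2]$; for $0<\alpha<1$ one may take the coordinatewise-monotone majorants $\bar g_i(u_1,u_2)=\bar g_{12}(u_1,u_2)=C\e^{-c(u_1+u_2)}$, using $\norm\mbu\norm\ge(|u_1|+|u_2|)/\sqrt2$. Hence (G)$^0_\alpha$ holds, and since the integrand of \eqref{XLevy2} lies in $L_\alpha(\R^2)$ by \eqref{G01}, the stochastic integral defining $X$ exists under (M)$_\alpha$ by the integration theory for infinitely divisible random measures recalled in Section~\ref{sec2}; see \eqref{chfIM}.

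I expect the only genuinely delicate point to be the bookkeeping in the first two steps: one has to verify \emph{simultaneously} that, after the constant term is subtracted when $\chi>0$, every remaining term in the Bessel expansion --- and, at negative-integer $\chi$, the $\log\norm\mbt\norm$ term --- is $o$ of the appropriate power $\rho(\mbt)^\chi$, $\rho(\mbt)^{\chi-1/q_i}$, $\rho(\mbt)^{\chi-Q}$, which is precisely where the constraint $\chi<1-\tfrac1\alpha$ enters. The remaining ingredients (integrability at infinity via exponential decay, the $\chi\leftrightarrow P$ dictionary, the explicit bounds supplied by Remark~\ref{remkappa}) are routine.
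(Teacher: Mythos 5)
Most of your argument runs along the same lines as the paper's proof (the expansion of $K_\chi$ via $I_{\pm\chi}$ for the local asymptotics, exponential decay of the Bessel function for the behaviour at infinity; the paper obtains the derivative asymptotics from the recurrence $K_\nu'(t)=-K_{\nu-1}(t)-\nu t^{-1}K_\nu(t)$ rather than by differentiating the series term by term, but that is an immaterial difference). There is, however, one genuine error, in your verification of \eqref{G01} in the case $\chi>0$: you assert that $g$ is then ``continuous, bounded and exponentially decaying, hence $\|g\|_\alpha<\infty$''. This is false. For $\chi>0$ the definition \eqref{gEx2mod} subtracts the constant $I(\chi>0)\Gamma(\chi)2^{\chi-1}$, i.e.\ $g(\mbt)=h(\mbt)-\lim_{\norm\mbs\norm\to 0}h(\mbs)$ with $h$ the Mat\'ern kernel (see the Remark following the proposition), so as $\norm\mbt\norm\to\infty$ the Bessel part vanishes and $g(\mbt)$ tends to the \emph{nonzero} constant $-\lim_{\norm\mbs\norm\to 0}h(\mbs)$. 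Consequently $\|g\|_\alpha=\infty$, and the bound $|g(\mbt-\mbu)-g(-\mbu)|^\alpha\le C(|g(\mbt-\mbu)|^\alpha+|g(-\mbu)|^\alpha)$ does not yield \eqref{G01} as you claim.

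The gap is easily repaired, and the repair is the very point of subtracting the constant: in the integrand of \eqref{XLevy2} for $\chi>0$ the constant cancels, since $g(\mbt-\mbu)-g(-\mbu)=h(\mbt-\mbu)-h(-\mbu)$, and $h$ itself is bounded near the origin (for $\chi>0$) and decays exponentially at infinity; hence $\int_{\R^2}|h(\mbt-\mbu)-h(-\mbu)|^\alpha\d\mbu<\infty$ by the $c_r$-inequality (or $|a-b|^\alpha\le|a|^\alpha+|b|^\alpha$ for $0<\alpha<1$), which is \eqref{G01}. Your treatment of the case $\chi<0$ (where no constant is subtracted and $g=h$ genuinely decays exponentially), of \eqref{G02} with the coordinatewise monotone majorants $C\e^{-c(u_1+u_2)}$, and of the local asymptotics and parameter dictionary $P=1/(1-\chi)$ is correct; with the above correction the proof is complete and consistent with the paper's.
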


\begin{proof}
	Using the relation $K_\chi (t) = K_{-\chi} (t) = \frac{1}{2} \Gamma(\chi) \Gamma(1-\chi)(I_{-\chi}(t) - I_\chi(t))$ (c.f.\ \cite[9.6.2, 9.6.6, p.375]{abra1972}), where the modified Bessel function of the first kind can be expressed as $I_\nu (t) = \frac{1}{2^{\nu}\Gamma (1+\nu)} t^\nu + \frac{1}{2^{2+\nu}\Gamma (2+\nu)} t^{2+\nu} + o (t^{2+\nu})$, $t \downarrow 0$, (c.f.\ \cite[9.6.10, p.375]{abra1972}), the asymptotics $g(\mbt) = g_0(\mbt) (1+o(1))$ with $g_0(\mbt)$ as in \eqref{Ex2g0} follows
	since $|\chi|<1$ in case $0< \chi < 1-\frac{1}{\alpha}$, and as shown before in case $\chi < 0$. Note the derivatives
	\begin{equation*}\label{partg}
	\partial_i g_0(\mbt) = 2\chi \norm \mbt \norm^{2\chi-1} \partial_i \norm \mbt \norm, \quad i=1,2, \qquad
	\partial_{12} g_0(\mbt) = 4 \chi (\chi -1) \norm \mbt \norm^{2\chi-2} \prod_{i=1}^2 \partial_i \norm \mbt \norm.
	\end{equation*}
	It suffices to verify \eqref{gg0} for derivatives $\partial_{12} \tilde g (\mbt)$, $\partial_i \tilde g (\mbt)$, $i=1,2$, where $\tilde g(\mbt) := \frac{2^{1+\chi}}{\Gamma (-\chi)} \norm t \norm^{\chi} K_{\chi} (\norm t \norm)$.
	Using the recurrence relation $K'_\nu (t) = - K_{\nu-1} (t) - \nu t^{-1} K_\nu (t)$ (c.f.\ \cite[9.6.26, p.376]{abra1972}) and $\partial_i \norm \mbt \norm = \norm \mbt \norm^{-1} t_i$, we find
	\begin{eqnarray*}
	&\partial_{i} \tilde g (\mbt) = \frac{2^{1+\chi}}{\Gamma (-\chi)} (\chi \norm \mbt \norm^{\chi -1} K_\chi (\norm \mbt \norm)  + \norm \mbt \norm^\chi K'_\chi (\mbt) ) \partial_i \norm \mbt \norm = - \frac{2^{1+\chi}}{\Gamma (-\chi)} \norm \mbt \norm^\chi K_{\chi-1} (\norm \mbt \norm) \partial_i \norm \mbt \norm,  \quad i=1,2,
	\end{eqnarray*}
	and
	\begin{eqnarray*}
	&\partial_{12} \tilde g(\mbt) = - \frac{2^{1+\chi}}{\Gamma (-\chi)} ( (\chi-1) \norm \mbt \norm^{\chi-2} K_{\chi-1} (\norm \mbt \norm) + \norm \mbt \norm^{\chi-1} K'_{\chi-1} (\norm \mbt \norm) ) t_1 \partial_2 \norm \mbt \norm\\
	&= \frac{2^{1+\chi}}{\Gamma(-\chi)}\norm \mbt \norm^\chi K_{\chi-2}(\norm \mbt \norm) \prod_{i=1}^2 \partial_i \norm \mbt \norm.
	\end{eqnarray*}
	Thus \eqref{gg0} follows using $ K_{-\nu} (t) = K_\nu (t)  \sim \Gamma (\nu) 2^{\nu-1} t^{-\nu}$, $t \downarrow 0$ $(\nu >0)$.
	The remaining facts of Assumption (G)$_\alpha$ follow from the definition in \eqref{gEx2mod}.
	Finally, Assumption (G)$^0_\alpha$ is guaranteed since $K_\nu (t)$ decays exponentially as $t \to \infty$ for any $\nu >0$.
\end{proof}

\begin{remark} The stationary Mat\'ern RF on $\R^2 $ is defined by
	\begin{equation} \label{YLevy2}
		Y(\mbt) := \int_{\R^2} h(\mbt - \mbu) M(\d \mbu), \qquad \mbt \in \R^2,
	\end{equation}
	where
	\begin{eqnarray*}
	&	h(\mbt) := \frac{2^{1+\chi}}{c^{\chi}\Gamma (-\chi)}
		\norm \boldsymbol{t} \norm^{\chi} K_{\chi} (c\norm \boldsymbol{t}\norm), \qquad \chi > - \frac{1}{\alpha},
	\end{eqnarray*}
	agrees with $g(\mbt) $ in \eqref{gEx2mod} for $\chi <0$; for $\chi >0$ we have  $g(\mbt) = h(\mbt) - \lim_{\norm \mbs \norm \to 0} h(\mbs)$. Therefore,
	for $\chi >0$ the RF
	$X  $ in \eqref{XLevy2} is the increment RF  $X(\mbt) = Y(\mbt) - Y(\0)$, $\mbt \in \R^2$, of the Mat\'ern RF $Y$ in \eqref{YLevy2}. Clearly, RF
	$X$ and $Y$ have identical ordinary and rectangular increments and tangent limits.  Finite-variance Mat\'ern RFs and their covariance functions
	are widely used in spatial applications, see \cite{gutt2006, jons2013, hansen2013, bol2014, wal2015} and the references therein. If $\E M(\d \mbu)^2=\sigma_M^2 \d \mbu$ and $\chi>-\frac{1}{2}$, then $\E |Y(\0)|^2<\infty$ and the covariance function of RF $Y$
	is given by
	\begin{eqnarray*}
		&R(\mbt) := \E Y(\0) Y(\mbt) = \E |Y (\0)|^2 \frac{(c \norm \boldsymbol{t} \norm )^{1 +2  \chi} K_{1+ 2\chi} (c \norm
			\boldsymbol{t} \norm)}{\Gamma (1+ 2\chi) 2^{2\chi}},
		\qquad \boldsymbol{t} \in \R^2.
	\end{eqnarray*}
	Whence, $\E X(\mbt) X(\mbs) = R(\mbt - \mbs) - R(-\mbs) - R(\mbt) + R(\0)$  and $\E (X(\mbt) - X(\mbs))^2
	= 2(R(\0) - R(\mbt-\mbs))$,
	$\mbt, \mbs \in \R^2$, for $\chi \in (0,\frac{1}{2})$. We note that
	the finite-variance RF $Y$ in \eqref{YLevy2} can be regarded as a stationary solution
	of the fractional Helmholtz equation
	\begin{equation*}
	(c^2 - \Delta)^{1+ \chi} Y (\mbt) =  \dot M(\mbt), \qquad \mbt \in \R^2,
	\end{equation*}
	where $\Delta  := \frac{\partial^2}{\partial t_1^2} +  \frac{\partial^2}{\partial t_2^2} $ is the Laplace operator
	and $\dot M (\mbt)= \partial_{12} M (\mbt)$ is the L\'evy white noise (the generalized random process),
	see  \cite{whit1963, bol2014}. Non-Gaussian Mat\'ern RF with
	$M$ belonging to some parametric class are discussed in \cite{bol2014,wal2015}.
\end{remark}

\medskip

\noi {\bf 3.3 \ Anisotropic fractional heat operator RF. } \ Let
\begin{eqnarray}\label{ex3g}
	&g(\mbt) =  \frac{t_1^\chi} {2^{\frac{1}{2}}
		(2\pi)^{\frac{3}{2}}  c_2  \Gamma(\chi + \frac{3}{2})}  \exp \{-c_1  t_1 - \frac{t_2^2}{4 c_2^2 t_1}\} I(t_1 >0),
	\qquad \mbt \in \R^2,
\end{eqnarray}
where $\chi > - \frac{3}{2 \alpha}$, $c_1 >0$, $c_2 >0$ are parameters.
In the case $\alpha=2$, the kernel in \eqref{ex3g} is related to the fractional heat operator  $(c_1 + \Delta_{12})^{\chi + \frac{3}{2}}$, $\Delta_{12}  := \frac{\partial}{\partial t_1} - c_2^2 \frac{\partial^2}{\partial t_2^2} $, as explained below.
(For $\chi = -\frac{1}{2} $ it solves the equation $(c_1 + \Delta_{12}) g(\mbt) = 0$, $t_1 >0$, as expected.)
The stationary solution of the corresponding stochastic equation
\begin{equation}\label{Ex3}
(c_1 + \Delta_{12})^{\chi + \frac{3}{2}} X (\mbt) =  \dot M(\mbt), \qquad \mbt \in \R^2,
\end{equation}
with Gaussian white noise $\dot M$ is defined in \cite[(3.2)]{kelb2005} as a moving average RF
\begin{equation} \label{XLevy3}
	X(\mbt) = \int_{\R^2} g(\mbt - \mbu) M(\d \mbu)
\end{equation}
with the spectral density (given by the squared Fourier transform of  $g$) of the form
\begin{eqnarray}\label{ex3f}
&	f(\mbx) = \frac{\sigma_M^2}{(2\pi)^2} |\widehat g(\mbx)|^2 =
	\frac{\sigma_M^2}{(2\pi)^2} \frac{1}{(x_1^2 + (c_1 + c^2_2 x_2^2)^2)^{\chi + \frac{3}{2}}}, \qquad \mbx \in \R^2.
\end{eqnarray}
We claim that the corresponding $g \in L_2 (\R^2)$ is given by \eqref{ex3g} for $\chi > - \frac{3}{4}$. Indeed, its Fourier transform  can be found from \cite[3.944.5--6]{grad2000}:
\begin{align}
	\int_{\R^2} g(\mbt) \e^{\i \mbt \cdot \mbx} \d \mbt
	&= {\textstyle \frac{1}{2\pi \Gamma(\chi+\frac{3}{2})}} \int_0^\infty t_1^{\chi} \e^{-c_1 t_1 + \i x_1 t_1} \d t_1
	\int_{\R} {\textstyle\frac{1}{(4\pi c_2^2 t_1)^{\frac{1}{2}}} \e^{-  \frac{t_2^2}{4 c_2^2 t_1} + \i x_2 t_2 }} \d x_2 \nn \\
	&= {\textstyle\frac{1}{2\pi \Gamma(\chi+\frac{3}{2})}} \int_0^\infty t_1^{\chi} \e^{ -t_1(c_1 + c_2^2 x_2^2)+ \i x_1 t_1 } \d t_1 \nn \\
	&= {\textstyle \frac{1}{2\pi}\frac{1}{(x_1^2 + (c_1 + c_2^2 x_2^2)^2)^{\frac{1}{2}(\chi + \frac{3}{2})} } \exp \{ \i ( \chi+ \frac{3}{2})
	\arctan ( \frac{x_1}{c_1 + c_2^2 x_2^2} ) \}}
\end{align}
and hence  $g$ in \eqref{ex3g} satisfies \eqref{ex3f}.
We note that the representation of $g$ in \cite[(3.7)]{kelb2005} is not explicit; the expression in \eqref{ex3g} was suggested by the derivation of the asymptotics of the fundamental solution of the fractional heat equation on $\Z^2 $ in \cite[proof of Prop.~4.1]{pils2017}.
For $\mbt \in \R^2$, let $g_0(\mbt):=\rho(\mbt)^\chi \ell (\mbt)$ with
\begin{eqnarray} \label{rhoEx3}
	&\rho (\mbt):= |t_1| + |t_2|^2, \qquad q_1 := 1, \ q_2 := 2,  \ Q = \frac{3}{2},\\
	&\ell (\mbt):= \frac{ z^\chi } {2^{\frac 1 2}(2\pi)^{\frac{3}{2}}  c_2  \Gamma(\chi+ \frac{3}{2})}
	\exp \{ - \frac{1}{4c_2^2} ( \frac{1}{z} - 1 ) \} I(t_1 >0),  \qquad \text{where } z := \frac{t_1}{\rho(\mbt)} \in (0,1]. \nn
\end{eqnarray}
Note $\ell$ in \eqref{rhoEx3} is a bounded generalized invariant function for any $\chi > -  \frac{3}{2}$; particularly,
$\ell (\mbt) \to 0$ as $z \downarrow 0$. We have
\begin{equation} \label{ex3g1}
	g(\mbt) =  \rho(\mbt)^\chi \ell(\mbt) (1 + o(1)) = g_0(\mbt)(1+ o(1)),  \qquad |\mbt| \to 0.
\end{equation}
The form of $g_0$ and the asymptotics in \eqref{ex3g1} are similar to \cite[(4.8)]{pils2017} and \cite[(4.11)]{sur2020} in the lattice case.

\begin{proposition}
	The kernel $g$ in \eqref{ex3g} satisfies Assumptions (G)$_\alpha$, (G)$^0_\alpha$ with $g_0$, $q_i$, $i=1,2$, as in \eqref{rhoEx3} for any
	\begin{eqnarray}
	&	-\frac{3}{2\alpha} <  \chi < \frac{3}{2} (1- \frac{1}{\alpha} ),  \qquad \chi \neq 0,
	\end{eqnarray}
	(equivalently,  $P = \frac{3}{3 - 2 \chi}  \in  (\frac{\alpha}{1 + \alpha}, \alpha)$, $P  \neq 1 $) and $\alpha \in (0,2]$.
	As a consequence, the RF $X$ is  \eqref{XLevy3} is well-defined for any
	$M$ satisfying Assumption  (M)$_\alpha$.
\end{proposition}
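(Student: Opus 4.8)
The plan is to reduce the whole verification to the exact factorization
\[
g(\mbt) \;=\; \e^{-c_1 t_1}\, g_0(\mbt), \qquad \mbt \in \R^2_0,
\]
where $g_0 = \rho(\mbt)^\chi \ell(\mbt)$ is the function in \eqref{rhoEx3}. This identity is immediate: for $t_1>0$ one has $z = t_1/\rho(\mbt)$ and $\rho(\mbt)^\chi\ell(\mbt) = C^{-1} t_1^\chi \exp\{-t_2^2/(4c_2^2 t_1)\}$ with $C = 2^{1/2}(2\pi)^{3/2} c_2\Gamma(\chi+\tfrac32)$, i.e.\ $\e^{c_1 t_1} g(\mbt)$, and both sides vanish for $t_1\le 0$. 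In particular $\ell$ is a bounded generalized invariant function, since $z^\chi\exp\{-\tfrac1{4c_2^2}(1/z-1)\}$ is bounded on $z\in(0,1]$ for every $\chi\in\R$ (the factor $\e^{-c/z}$ overwhelms any power of $z$ as $z\downarrow 0$); thus \eqref{ex3g1} holds and $g - g_0 = (\e^{-c_1 t_1}-1)g_0$ is a lower-order remainder.

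First I would establish the bounds \eqref{g12bd} for $g_0$. Differentiating the scaling relation $g_0(\lambda t_1,\lambda^{1/2}t_2)=\lambda^\chi g_0(\mbt)$ ($\lambda>0$) gives $\partial_1 g_0(\lambda t_1,\lambda^{1/2}t_2)=\lambda^{\chi-1}\partial_1 g_0(\mbt)$, $\partial_2 g_0(\lambda t_1,\lambda^{1/2}t_2)=\lambda^{\chi-\frac12}\partial_2 g_0(\mbt)$, $\partial_{12}g_0(\lambda t_1,\lambda^{1/2}t_2)=\lambda^{\chi-\frac32}\partial_{12}g_0(\mbt)$; choosing $\lambda=\rho(\mbt)$ reduces \eqref{g12bd} (with $q_1=1$, $q_2=2$, $Q=\tfrac32$) to finiteness of $\sup_{\rho(\mbt)=1}$ of $|g_0|,|\partial_i g_0|,|\partial_{12}g_0|$. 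On $\{\rho(\mbt)=1,\,t_1>0\}$ we have $t_2^2=1-t_1$, and each of these functions equals a polynomial in $t_1^{-1}$ times $t_1^\chi\exp\{-(1-t_1)/(4c_2^2 t_1)\}$, which stays bounded as $t_1\downarrow 0$ (and vanishes there flatly, so $g_0$ and its derivatives extend by $0$ to $\{t_1\le 0\}$ and exist on all of $\R^2_0$). The asymptotics \eqref{gg0} then follow from $g-g_0=(\e^{-c_1 t_1}-1)g_0$, $\partial_2(g-g_0)=(\e^{-c_1 t_1}-1)\partial_2 g_0$, $\partial_1(g-g_0)=(\e^{-c_1 t_1}-1)\partial_1 g_0 - c_1\e^{-c_1 t_1}g_0$, $\partial_{12}(g-g_0)=(\e^{-c_1 t_1}-1)\partial_{12}g_0 - c_1\e^{-c_1 t_1}\partial_2 g_0$ together with $|\e^{-c_1 t_1}-1|\le c_1 t_1\le c_1\rho(\mbt)$, which yield $o(\rho(\mbt)^\chi)$, $o(\rho(\mbt)^{\chi-1/q_i})$, $o(\rho(\mbt)^{\chi-Q})$. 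The remaining clauses of (G)$_\alpha$ are immediate, and a direct computation with $Q=\tfrac32$ (so $p_1=\tfrac32-\chi$, $p_2=3-2\chi$, $P=\tfrac3{3-2\chi}$) shows \eqref{qin} is equivalent to $-\tfrac3{2\alpha}<\chi<\tfrac32(1-\tfrac1\alpha)$, i.e.\ to $P\in(\tfrac\alpha{1+\alpha},\alpha)$, $P\neq 1$.

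Next I would verify (G)$^0_\alpha$; here $g^0_1=g^0_2=g^0_{12}=0$, so \eqref{G01} is $\|g\|_\alpha^\alpha<\infty$. Over $\{\rho(\mbu)<1\}$ use $|g(\mbu)|\le C\rho(\mbu)^\chi$ and \eqref{intrho}, which needs $Q>-\alpha\chi$, i.e.\ $\chi>-\tfrac3{2\alpha}$. Over $\{\rho(\mbu)\ge 1\}$ use that $u_1^\chi\exp\{-\tfrac{c_1}2 u_1-\tfrac{u_2^2}{8c_2^2 u_1}\}$ is bounded (either $u_1\ge\tfrac12$, or $u_2^2>\tfrac12$ and $\e^{-c/u_1}$ dominates $u_1^{-|\chi|}$) together with the elementary AM--GM inequality $c_1 u_1 + u_2^2/(4c_2^2 u_1)\ge c(u_1+|u_2|)$ for $u_1>0$, $u_2\in\R$ (with $c=\min\{\tfrac{c_1}2,\tfrac{\sqrt{c_1}}{\sqrt2\,c_2}\}$), giving $|g(\mbu)|\le C\e^{-c(u_1+|u_2|)}$ there, hence integrability. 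For \eqref{G02} and its monotone-majorant form when $0<\alpha<1$: each of $\partial_1 g,\partial_2 g,\partial_{12}g$ equals (a polynomial in $u_1^{-1},u_2$) times $u_1^\chi\exp\{-c_1 u_1-u_2^2/(4c_2^2 u_1)\}I(u_1>0)$, and on $\{|\mbu|>\delta\}$, splitting into $\{u_1>\delta/2\}$ and $\{0<u_1\le\delta/2,\ |u_2|>\delta/2\}$ and absorbing the polynomial prefactors into the Gaussian (via $|u_2|^{2j}\e^{-u_2^2/(8c_2^2 u_1)}\le C_j u_1^j$ and, on the second region, $u_1^{-m}\e^{-u_2^2/(16c_2^2 u_1)}\le C_m$ since $|u_2|>\delta/2$) and then applying the AM--GM bound gives $|\partial_1 g(\mbu)|,|\partial_2 g(\mbu)|,|\partial_{12}g(\mbu)|\le C\e^{-c(u_1+|u_2|)}$. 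Taking $\bar g_i=\bar g_{12}:=C\e^{-c(u_1+u_2)}$ on $\R^2_+$ — monotone decreasing in each coordinate and in $L_\alpha(\R^2_+)$ — settles both \eqref{G02} and the $0<\alpha<1$ case, and well-definedness of $X$ in \eqref{XLevy3} then follows from the integrability criterion recalled after \eqref{chfIM}.

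The hard part will be this last step: the Gaussian factor in $g$ and its derivatives has the $u_1$-dependent scale $4c_2^2 u_1$, so there is no decay bound in $u_2$ uniform in $u_1$, and a naive power majorant is not $L_\alpha$ near $\{u_1=0\}$; the AM--GM inequality $c_1 u_1 + u_2^2/(4c_2^2 u_1)\ge c(u_1+|u_2|)$ is precisely what trades the product of the slowly decaying $\e^{-c_1 u_1}$ and the scale-varying Gaussian for genuine exponential decay in both variables, which is what makes the monotone majorant transparent. Everything else rests only on the scaling homogeneity of $g_0$ and on the Gaussian factor swamping the negative powers of $t_1$ near $t_1=0$.
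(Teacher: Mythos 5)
Your proof is correct, and its backbone coincides with the paper's: both arguments rest on the exact factorization $g(\mbt)=\e^{-c_1 t_1}g_0(\mbt)$ (the paper writes $\tilde g=\tilde g_0\,\e^{-t_1}$ after normalizing constants), deduce \eqref{gg0} from it together with $|\e^{-c_1 t_1}-1|\le c_1\rho(\mbt)$, and handle the near-origin part of \eqref{G01} through $|g|\le C\rho(\mbt)^\chi$ and \eqref{intrho}, which is exactly where $\chi>-\frac{3}{2\alpha}$ enters. The technical devices differ in two places. For the pointwise bounds \eqref{g12bd}, the paper changes variables to $u=t_1$, $z=t_2^2/t_1$ (so that $\rho(\mbt)=u(1+z)$) and uses elementary bounds of the type $\e^{-z}\le C(1+z)^{\chi}$, whereas you invoke the generalized homogeneity of $g_0$ and its derivatives and reduce to boundedness of $g_0,\partial_i g_0,\partial_{12}g_0$ on the set $\{\rho(\mbt)=1\}$; these are equivalent computations and both are valid. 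For (G)$^0_\alpha$ away from the origin, the paper evaluates the relevant integrals explicitly in the $(u,z)$ coordinates, while you derive a single exponential majorant $C\e^{-c(|u_1|+|u_2|)}$ on $\{|\mbu|>\delta\}$ via the AM--GM inequality $c_1u_1+u_2^2/(4c_2^2u_1)\ge c(u_1+|u_2|)$ ($u_1>0$); this buys you, at no extra cost, the explicit monotone majorants $\bar g_i=\bar g_{12}=C\e^{-c(u_1+u_2)}$ demanded by (G)$^0_\alpha$ in the case $0<\alpha<1$, a point the paper's proof treats only implicitly, so your route is in this respect slightly more complete. The only bookkeeping worth spelling out in a final write-up is the absorption of the polynomial prefactors $u_1^{-m}|u_2|^{k}$ of $\partial_i g,\partial_{12}g$ into fractions of the Gaussian and of the factor $\e^{-c_1u_1}$ before applying AM--GM (split into $\{u_1>\delta/2\}$ and $\{u_1\le\delta/2,\,|u_2|>\delta/2\}$ as you indicate); this is routine and goes through as sketched.
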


\begin{proof}
	It suffices to prove the proposition for $\tilde g(\mbt) := t_1^\chi \exp \{-t_1 - \frac{t_2^2}{t_1} \} I(t_1 >0)$ and $\tilde g_0(\mbt) := t_1^\chi \exp \{- \frac{t_2^2}{t_1} \} I(t_1 >0)$, $\mbt \in \R^2_0$.
	Let us verify \eqref{g12bd} for $\mbt = (t_1,t_2) \in \R^2_0$ such that $t_1>0$.
	It is convenient to change the variables as $u:= t_1 >0$, $z := \frac{t_2^2}{t_1} >0$.
	For any $\chi \in \R$, there exists $C>0$ such that $\e^{-z} \le C (1+z)^\chi$ for all $z>0$, hence $\tilde g_0(\mbt) = u^\chi \e^{-z} \le C u^\chi (1+z)^\chi = C \rho(\mbt)^\chi$.
	In a similar way,
	\begin{align*}
	|\partial_1 \tilde g_0(\mbt)|
	&=u^{\chi-1}|\chi   + z|  \e^{-z} \le C (u(1+z))^{\chi -1} = C \rho(\mbt)^{\chi - \frac{1}{q_1}}, \\
	|\partial_2 \tilde g_0(\mbt)|
	&=2u^{\chi- \frac{1}{2}} z^{\frac{1}{2}} \e^{-z}
	\le C  (u(1+z))^{\chi - \frac{1}{2}} =   C\rho(\mbt)^{\chi- \frac{1}{q_2}}, \\
	|\partial_{12} \tilde g_0(\mbt)| &= 2 u^{\chi-\frac{3}{2}} |1-\chi - z| z^{\frac{1}{2}} \e^{-z}
	\le C (u(1+z))^{\chi - \frac{3}{2}} = C \rho(\mbt)^{\chi - Q},
	\end{align*}
	proving \eqref{g12bd}.
	Relations \eqref{gg0} follow from \eqref{g12bd} and $\tilde g(\mbt) = \tilde g_0(\mbt) \e^{-t_1}$ since
	$\e^{-t_1} = 1 + O(t_1)$, $t_1 \downarrow 0$, together with its derivatives.
	Finally, let us verify Assumption (G)$^0_\alpha$.
	After the above-given change of variables,
	$\int_{\R^2} |\tilde g (\mbt)|^\alpha \d \mbt = \int_0^\infty u^{\alpha \chi + \frac{1}{2}} \e^{-\alpha u} \d u \int_0^\infty z^{-\frac{1}{2}} \e^{-\alpha z} \d z < \infty$ since $\chi > -\frac{3}{2\alpha}$.
	Moreover, $\int_{\rho(\mbt)>2} |\partial_1 \tilde g_{0}(\mbt)|^\alpha \e^{-\alpha t_1} I(t_1>0) \d \mbt = \int_0^\infty J (u) u^{\alpha(\chi-1) + \frac{1}{2}} \e^{-\alpha u} \d u < \infty$ since $J(u) := \int_{0 \vee (\frac{2}{u}-1)}^\infty  | \chi+z |^\alpha z^{-\frac{1}{2}} \e^{-\alpha z} \d z \le
	C (u^{\frac{1}{2}-\alpha} \e^{-\frac{\alpha}{u}} I (u \le 1) + I (u > 1))$.
	In a similar way, we can show $\int_{\rho(\mbt)>2} (|\partial_2 \tilde g(\mbt)|^\alpha + |\partial_{12} \tilde g(\mbt)|^\alpha) \d \mbt < \infty$.
\end{proof}

\section{A class of $\alpha$-stable MSS RFs with one of the self-similarity parameters equal 0 or 1}
\label{sec4}

In this section  we define the $\alpha$-stable RFs $\Upsilon_{\alpha,i}$, $\tilde{\Upsilon}_{\alpha,i}$, $i=1,2$, of Table~\ref{tab1} as integrals w.r.t.\ $\alpha$-stable random measure on $\R^2$.
In the Gaussian case $\alpha =2 $ these RFs up to a scale factor coincide with standard FBS defined via the covariance function in \eqref{FBScov} for $\mbH \in (0,1]^2$.
The following definition extends the last covariance to $\mbH \in [0,1]^2 $.

\begin{definition}[\cite{sur2020}] \label{defFBS}
	Standard FBS $B_{\mbH}= \{ B_{\mbH} (\mbt), \, \mbt \in \R^2_+  \}$ with $\mbH =(H_1,H_2)
	\in [0,1]^2$, $H_1 \wedge H_2 = 0$, is defined as a Gaussian process with zero-mean and covariance function $\E B_{\mbH} (\mbt) B_{\mbH} (\mbs) = \prod_{i=1}^2 R_{H_i} (t_i,s_i)$, $\mbt, \mbs \in \R^2_+$, where for $t,s \in \R_+$,
	\begin{align*}
		R_H (t,s) &= {\textstyle \frac{1}{2}} (t^{2H} + s^{2H} - |t-s|^{2H}), \qquad 0<H\le 1,\\
		R_0 (t,s) &= \lim_{H \downarrow 0} R_H (t,s) = 1 - {\textstyle \frac{1}{2}} I(t \neq s).
	\end{align*}
\end{definition}

\begin{remark} \label{rem2}
	The covariance in \eqref{FBScov} implies that the restriction of  FBS  $B_{\mbH}$ to horizontal/vertical line agrees with fractional Brownian motion (FBM) $B_H=\{B_H(t), \, t\in \R_+\}$ with the corresponding Hurst parameter $H = H_i \in (0,1]$, $i=1,2 $.
	Following Definition \ref{defFBS}  we may define FBM $B_H$ with $H=0$ as a Gaussian process on $\R_+$ with zero-mean and the covariance function $\E B_0(t) B_0(s) = 1 - \frac{1}{2} I(t \ne s)$, $t,s \in \R_+$.
	The last process is $H$-SS SI with $H=0$ and satisfies the strange property that $\E (B_0(t)-B_0(s))^2 = 1 = \lim_{H \downarrow 0} |t-s|^{2H}$ for any $t,s \in \R_+$, $t\neq s$.
	It can be represented as $B_0 \eqfdd \{\frac{1}{\sqrt{2}}(W(t) - W(0)), \, t \in \R_+ \}$, where $W(t)$, $t \in [0,\infty)$, is (uncountable) family of \emph{independent} $N(0,1)$ r.v.s.
	See \cite[Examples  1.3.1, 8.2.3]{samo2016}.
	We note that the above $B_0$ is  different from the 'regularized' FBM with $H=0$ defined in \cite[p.2985]{fyod2016}, which is not $0$-self-similar and has a.s.\ continuous paths.
	Non-constant $H$-SS SI processes with $H=0$ are extremely singular (not measurable or `ugly'), see
	\cite[pp.256--257]{samo2016}.
	FBS $B_{\mbH}$ with $H_1\wedge H_2 =0$ and their $\alpha$-stable extensions $\Upsilon_{\alpha,i}$, $i=1,2$, defined below also share these singularity properties and appear to be very unusual objects by most standards in the probability theory.
\end{remark}

We start by defining two classes  $Y_{\alpha,i}$, $\tilde{Y}_{\alpha,i}$, $i=1,2$, of $\alpha$-stable SS SI processes indexed by one-dimensional time parameter.
The corresponding RFs $\Upsilon_{\alpha,i}$, $\tilde{\Upsilon}_{\alpha,i}$, $i=1,2$, indexed by points
of $\R^2_+$  are defined afterwards.
These definitions are completely analogous for $i=1$ and $i=2$ and essentially reduce to exchanging of the coordinate axes.
The processes $Y_{\alpha,i} = \{Y_{\alpha,i}(t), \, t \in \R_+ \}$, $\tilde{Y}_{\alpha,i} = \{\tilde{Y}_{\alpha,i}(t), \, t \in \R_+ \}$, $\alpha \in (0,2]$, are defined by
\begin{equation}\label{TU}
	Y_{\alpha,i}(t)
:= \int_{\R^2} h_{i} (t; \mbu) W_{\alpha}(\d \mbu),   \qquad
	\tilde{Y}_{\alpha,i}(t) := \int_{\R^2} \tilde h_{i} (t; \mbu) W_{\alpha}(\d \mbu), \qquad i=1,2,
\end{equation}
as stochastic integrals in \eqref{chfIW} of deterministic kernel functions
\begin{align} \label{halpha}
	\tilde h_1 (t; \mbu)
	&:=\partial_2  g_0(t \mbe_1 - \mbu) - \partial_2 g_0(-\mbu), \qquad
	\tilde h_2 (t; \mbu) :=\partial_1  g_0(t \mbe_2 - \mbu) - \partial_1 g_0(-\mbu), \nn\\
	h_{i} (t; \mbu) &:= g_0(t\mbe_i -\mbu)-g_0(- \mbu), \quad i=1,2,\qquad h_{0} (\mbt; \mbu)
	:=g_0((-\mbu, \mbt - \mbu]),
\end{align}
where $t\in\R_+$, $\mbt\in \R_+^2$, $\mbu \in \R^2$, $\mbe_1 := (1,0)$, $\mbe_2 := (0,1)$ ($h_0$ is used later to define the limit rectangent RF
arising under well-balanced scaling).
Recall the definition of $P_{c_1,c_2}$ in \eqref{Pcc}.

\begin{proposition} \label{prelim}
	Let  $0 < \alpha \le  2$,  
	$g_0$ be as in \eqref{glim} and
	satisfy the bounds \eqref{g12bdp}, moreover,
	$\frac{\alpha}{1+\alpha} < P  < \alpha$, $P  \ne 1 $.
	Then for any $\mbt \in \R^2_+$, $t \in \R_+ $,
	\begin{itemize}
		\item[(i)] $\|h_0(\mbt; \cdot)\|_\alpha < \infty$;
		\item[(ii)] $\|h_i(t; \cdot)\|_\alpha < \infty$ provided $P_{\frac 1 \alpha, \frac{1+\alpha}{\alpha}} <1$ $(i=1)$ or $P_{\frac{1+\alpha}{\alpha}, \frac{1}{\alpha}} <1$ $(i=2)$ hold;
		\item[(iii)] $\|\tilde h_i(t; \cdot)\|_\alpha < \infty $ provided $P_{\frac{1+\alpha}{\alpha}, \frac{1}{\alpha}} >1$ $(i=1)$ or $P_{\frac{1}{\alpha}, \frac{1+\alpha}{\alpha}} >1$ $(i=2)$ hold.
	\end{itemize}
\end{proposition}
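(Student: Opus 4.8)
The plan is to establish (i)--(iii) by splitting $\R^2$ into three regions and bounding the $L_\alpha$-norm of the relevant kernel on each, using throughout the pointwise bounds \eqref{g12bdp}, the subadditivity inequality \eqref{rhoineq2} (valid for $\rho_0(\mbt)=|t_1|^{p_1}+|t_2|^{p_2}$ with $p:=\max\{p_1,p_2,1\}$ in place of $q$), and the integrability dichotomy \eqref{intrho} (valid with $\rho_0$, $P$ in place of $\rho$, $Q$). I may fix $\mbt\in\R^2_+$, $t>0$, so that the singular points of the integrands are distinct: $\0,t_1\mbe_1,t_2\mbe_2,\mbt$ for $h_0(\mbt;\cdot)$, and $\0,t\mbe_i$ for $h_i(t;\cdot)$ and $\tilde h_i(t;\cdot)$. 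For $M$ large and $\delta$ small, let $D_\infty:=\{\rho_0(\mbu)>M\}$, let $D_0$ be the union of the balls $\{\rho_0(\mbu-\mbp)<\delta\}$ over the singular points $\mbp$, and let $D_1:=\{\rho_0(\mbu)\le M\}\setminus D_0$. On $D_1$ the arguments of $g_0,\partial_ig_0,\partial_{12}g_0$ stay in a compact subset of $\R^2_0$, hence those functions are bounded and the $D_1$-contribution is finite under no restriction.

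On $D_0$ I bound the kernel term by term: near each singular point exactly one summand of the increment is singular and is dominated, by \eqref{g12bdp}, by $C\rho_0(\cdot)^{P-1}$ for $h_0$ and $h_i$, and by $C\rho_0(\cdot)^{P-1-1/p_{3-i}}$ for $\tilde h_i$ (whose singular summand is a value of $\partial_{3-i}g_0$), the remaining summands being bounded. Raising to the power $\alpha$, translating, and applying the first alternative of \eqref{intrho}, the $D_0$-contribution is finite provided $P>\alpha(1-P)$, i.e.\ $P>\frac{\alpha}{1+\alpha}$ --- part of the standing hypothesis --- in cases (i) and (ii); and provided $P>\alpha(1+\tfrac1{p_{3-i}}-P)$ in case (iii), which a one-line rearrangement turns into exactly $P_{\frac{1+\alpha}{\alpha},\frac1\alpha}>1$ for $i=1$ and $P_{\frac1\alpha,\frac{1+\alpha}{\alpha}}>1$ for $i=2$, the hypotheses of (iii). (If the pertinent exponent on $\rho_0$ is $\ge0$ the kernel is bounded near the singular points and no condition is needed; this happens only outside those hypotheses.)

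On $D_\infty$ the term-by-term bound $C\rho_0(\mbu)^{P-1}$ is too weak (it would force $P<\frac{\alpha}{1+\alpha}$), so one must use the cancellation in the increments. For $M$ large the relevant segments and rectangle, translated by $-\mbu$, avoid the origin, so by the regularity of $g_0$ away from $\0$ I may write $h_i(t;\mbu)=\int_0^t\partial_ig_0(s\mbe_i-\mbu)\,\d s$, $\tilde h_i(t;\mbu)=\int_0^t\partial_{12}g_0(s\mbe_i-\mbu)\,\d s$, and $h_0(\mbt;\mbu)=\int_0^{t_1}\!\int_0^{t_2}\partial_{12}g_0((s_1,s_2)-\mbu)\,\d s_2\,\d s_1$. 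Since \eqref{rhoineq2} gives $\rho_0(\mbz-\mbu)\asymp\rho_0(\mbu)$ uniformly over $\mbz$ in the bounded integration domain once $M$ is large, \eqref{g12bdp} yields $|h_0(\mbt;\mbu)|\le C\rho_0(\mbu)^{-1}$, $|\tilde h_i(t;\mbu)|\le C\rho_0(\mbu)^{-1}$ and $|h_i(t;\mbu)|\le C\rho_0(\mbu)^{P-1-1/p_i}$ on $D_\infty$; here the hypothesis of (ii) forces $p_{3-i}>\frac{1+\alpha}{\alpha}>1$, so the last exponent equals $\frac1{p_{3-i}}-1<0$ and $h_i$ indeed decays. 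Raising to the power $\alpha$ and applying the second alternative of \eqref{intrho}, the $D_\infty$-contribution is finite provided $P<\alpha$ --- standing --- in cases (i) and (iii); and provided $\alpha(1+\tfrac1{p_i}-P)>P$ in case (ii), which rearranges to $P_{\frac1\alpha,\frac{1+\alpha}{\alpha}}<1$ for $i=1$ and $P_{\frac{1+\alpha}{\alpha},\frac1\alpha}<1$ for $i=2$, the hypotheses of (ii). Summing the contributions of $D_0$, $D_1$, $D_\infty$ proves (i)--(iii).

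I expect the $D_\infty$ estimate to be the main obstacle: everything there hinges on replacing the kernel by an integral of a higher-order derivative of $g_0$ (the mixed derivative for $h_0$ and $\tilde h_i$, the first-order one for $h_i$) and then checking that the decay rate supplied by the anisotropic bounds \eqref{g12bdp} yields, through \eqref{intrho}, integrability conditions that coincide precisely with the thresholds $P_{\cdot,\cdot}\ne1$ in the statement. Matching these algebraic rearrangements, and confirming that $P-1-\tfrac1{p_i}<0$ whenever invoked (which, as noted, is forced by the hypotheses themselves), is the delicate bookkeeping; the treatment of $D_0$ and $D_1$ is routine.
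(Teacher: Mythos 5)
Your proposal is correct and follows essentially the same route as the paper's own proof: term-by-term bounds from \eqref{g12bdp} near the singular points, the representation of the increments as integrals of $\partial_i g_0$ or $\partial_{12}g_0$ combined with $\rho_0(\mbz-\mbu)\asymp\rho_0(\mbu)$ (via \eqref{rhoineq2}) in the far region, and \eqref{intrho} to convert the resulting exponents into exactly the stated conditions on $P$ and $P_{c_1,c_2}$. The only cosmetic differences are your three-region split $D_0$, $D_1$, $D_\infty$ in place of the paper's two regions $U_r$, $U_r^c$ (the paper absorbs your $D_0\cup D_1$ into one bounded region via translation), and your explicit treatment of both $i=1,2$ where the paper reduces to $i=1$ by symmetry.
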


\begin{proof}
	It suffices to consider $\mbt = \1$, $t = 1 $ as well as case $i=1$ in (ii)--(iii) only. We will use the triangle inequality \eqref{rhoineq2} for $\rho_0$ with $p:=\max\{p_1,p_2,1\}$. Let $U_r := \{ \boldsymbol{u} \in \R^2 : \rho_0 (\boldsymbol{u})^{\frac{1}{p}} < r \}$ for $r>2^{\frac{1}{p}}$.

	\smallskip

	\noi (i) We have $\int_{U_r} | h_0 (\1; \mbu) |^\alpha \d \mbu \le C \int_{U_{2 r}} | g_0 (\mbu) |^\alpha \d \mbu$ with $|g_0 (\mbu)| \le C \rho_0 (\mbu)^{P-1}$ by \eqref{g12bdp}, which satisfies $\int_{U_{2r}} \rho_0 (\mbu)^{\alpha(P-1)} \d \mbu < \infty$ by  \eqref{intrho} since $\alpha (P-1) > -P$.
	On the other hand, rewriting $h_0 (\1; \mbu) = \int_{[\boldsymbol{0}, \1]} \partial_{12} g_0 (\mbt - \mbu) \d \mbt$ with $|\partial_{12}g_0(\boldsymbol{t} - \mbu)| \le C \rho_0(\mbu)^{-1}$ for all $\boldsymbol{u} \in U_r^c := \R^2 \setminus U_r$ by \eqref{g12bdp}, \eqref{rhoineq2},
	we have $\int_{U_r^c} |h_0(\1; \mbu)|^\alpha \d \mbu \le C \int_{U_r^c} \rho_0 (\mbu)^{-\alpha} \d \mbu <\infty$ by  \eqref{intrho} since $P < \alpha $.

	\smallskip

	\noi (ii) We have $\int_{U_r} |h_1(1;\mbu)|^\alpha \d \mbu \le C \int_{U_{2r}} |g_0(\mbu)|^\alpha \d \mbu <\infty$ as in the proof of (i).
	Next, rewriting $h_1(1; \mbu) = \int_0^1 \partial_1 g_0( t\mbe_1 - \mbu) \d t$ with $|\partial_1 g_0(t \boldsymbol{e}_1-\boldsymbol{u})| \le C \rho_0(\boldsymbol{u})^{\frac{1}{p_2}-1}$ on $U_r^c$ by \eqref{g12bdp}, \eqref{rhoineq2}, we have $\int_{U_r^c} |h_1 (1;\mbu)|^\alpha \d \mbu \le C \int_{U_r^c} |\rho_0(\mbu)|^{\alpha (\frac{1}{p_2} -  1)} \d \mbu < \infty $  by  \eqref{intrho} since $P_{\frac{1}{\alpha}, \frac{1+\alpha}{\alpha}} < 1$.

	\smallskip

	\noi (iii) We have $\int_{U_r} |\tilde h_1 (1;\mbu)|^\alpha \d \mbu \le C \int_{U_{2r}} |\partial_2 g_0 (\mbu)|^\alpha \d \mbu$ with $|\partial_2 g_0 (\mbu)| \le C \rho_0 (\mbu)^{\frac{1}{p_1}-1}$ by \eqref{g12bdp}, which satisfies $\int_{U_{2r}} \rho_0 (\mbu)^{\alpha (\frac{1}{p_1}-1)} \d \mbu < \infty$ by \eqref{intrho} since $P_{\frac{1+\alpha}{\alpha}, \frac{1}{\alpha}} > 1$.
	Next, rewriting $\tilde h_1(1; \mbu) = \int_0^1 \partial_{12} g_0( t\mbe_1 - \mbu) \d t $  with $|\partial_{12} g_0(t \boldsymbol{e}_1-\boldsymbol{u})| \le C \rho (\boldsymbol{u})^{-1}$ on $U_r^c$ by \eqref{g12bdp}, \eqref{rhoineq2}, we have $\int_{U_r^c} |\tilde h_1(1;\allowbreak \mbu)|^\alpha \d \mbu \le C \int_{U_r^c} \rho_0(\mbu)^{-\alpha} \d \mbu< \infty$ by  \eqref{intrho}  since $P<\alpha$.
	Proposition \ref{prelim} is proved.
\end{proof}

Let
\begin{eqnarray}\label{defH}
	&H_{\alpha,1}:=\frac{1+ \alpha}{\alpha} (1  +  \frac{p_1}{p_2} ) - p_1, \qquad
	\tilde H_{\alpha,1} := \frac{1+\alpha}{\alpha} + \frac{p_1}{\alpha p_2} - p_1, \\
	&H_{\alpha,2}:=\frac{1+ \alpha}{\alpha} (1  +  \frac{p_2}{p_1} ) - p_2, \qquad
	\tilde H_{\alpha,2} := \frac{1+\alpha}{\alpha} + \frac{p_2}{\alpha p_1} - p_2. \nn
\end{eqnarray}
Note the equivalencies:
\begin{eqnarray*}
	P> \mbox{$\frac{\alpha}{1+\alpha}$} \ \ \& \ \
	P_{\frac{1}{\alpha},\frac{1+\alpha}{\alpha}} <1  &\Longleftrightarrow&   0< H_{\alpha,1} <1, \\
	P < \alpha \ \ \& \ \  P_{\frac{1}{\alpha}, \frac{1+\alpha}{\alpha}} >1 &\Longleftrightarrow&  0< \tilde H_{\alpha,2} < 1, \\
	P_{\frac{1}{\alpha}, \frac{1+\alpha}{\alpha}} = 1  &\Longleftrightarrow&  H_{\alpha,1} =1  \ \ \& \ \  \tilde H_{\alpha,2} =0.
\end{eqnarray*}
Similar equivalencies hold for $\tilde H_{\alpha,1}$ and $H_{\alpha,2}$ by symmetry.
Also note that $\tilde H_{\alpha,1} = \tilde H_{\alpha,2}=1 $ when $P=\alpha $ and  $H_{\alpha,1} = H_{\alpha,2}=0 $ when $P= \frac{\alpha}{1+\alpha}. $

\begin{corollary} \label{propTU}
	Let $g_0$, $p_i$, $i=1,2$, satisfy the conditions of Proposition \ref{prelim}.
	Then the random processes in \eqref{TU} are well-defined, $H$-SS SI and have $\alpha$-stable finite dimensional distributions in the parameter regions indicated in Table \ref{tab2} below:
	\begin{table}[htbp]
		\begin{center}
			\begin{tabular}{crrrr}
				&$Y_{\alpha, 1} $ & ${\tilde Y}_{\alpha, 2}  $ & $Y_{\alpha, 2} $  & ${\tilde Y}_{\alpha, 1} $ \\
				\hline
				Parameter region & $P_{\frac{1}{\alpha}, \frac{1+\alpha}{\alpha}} < 1$ & $P_{\frac{1}{\alpha}, \frac{1+\alpha}{\alpha}} > 1$   &  $P_{\frac{1+\alpha}{\alpha}, \frac{1}{\alpha}} < 1 $
				&   $P_{\frac{1+\alpha}{\alpha}, \frac{1}{\alpha}} > 1$ \\
				$H$    & $H_{\alpha,1}$     & $\tilde H_{\alpha,2}$   &  $H_{\alpha,2}$   & $\tilde H_{\alpha,1}$\\
				\hline
			\end{tabular}
			\caption{Parameter regions and self-similarity indices of $\alpha$-stable random processes $Y_{\alpha, i}, {\tilde Y}_{\alpha, i}$, $i=1,2$,
				in \eqref{TU}.}\label{tab2}
		\end{center}
	\end{table}
	
	\noi Particularly, in the Gaussian case $\alpha =2 $ the processes  $Y_{2, i}$, ${\tilde Y}_{2,i}$, $i=1,2$, agree with corresponding FBM, viz.,
	\begin{equation}\label{TUfbm}
		Y_{2,i} \eqfdd \sigma_{i} B_{H_{2, i}},  \qquad {\tilde Y}_{2,i} \eqfdd
		\tilde \sigma_{i} B_{\tilde H_{2,i}},  \qquad  i=1,2,
	\end{equation}
	where $ \sigma^2_{i} := \| h_i (1;\cdot) \|_2^2$, $\tilde \sigma^2_{i} :=\| \tilde h_i (1; \cdot) \|_2^2$, $i=1,2$.
\end{corollary}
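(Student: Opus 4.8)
\textit{Proof plan.} We must check, for each of the four processes, four things: that it is well defined, that it has $\alpha$-stable finite-dimensional distributions, that it has stationary increments, and that it is $H$-self-similar with the index listed in Table~\ref{tab2}; and then identify the Gaussian case. Since the definitions for $i=1$ and $i=2$ differ only by interchanging the two coordinate axes, and the passage from $Y_{\alpha,i}$ to $\tilde Y_{\alpha,i}$ only replaces $g_0$ by a first partial derivative of $g_0$, it suffices to treat $Y_{\alpha,1}$ and $\tilde Y_{\alpha,1}$ in detail. Well-definedness is immediate from Proposition~\ref{prelim}: in the region $P_{\frac1\alpha,\frac{1+\alpha}{\alpha}}<1$ of Table~\ref{tab2}, part (ii) gives $h_1(t;\cdot)\in L_\alpha(\R^2)$ for every $t\in\R_+$, so $Y_{\alpha,1}(t)$ is a bona fide $\alpha$-stable integral; in the region $P_{\frac{1+\alpha}{\alpha},\frac1\alpha}>1$, part (iii) gives $\tilde h_1(t;\cdot)\in L_\alpha(\R^2)$, so $\tilde Y_{\alpha,1}(t)$ is well defined. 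For the $\alpha$-stable finite-dimensional distributions: for $t_1,\dots,t_n\in\R_+$ and $\theta_1,\dots,\theta_n\in\R$, the linear combination $\sum_j\theta_j Y_{\alpha,1}(t_j)=\int_{\R^2}\bigl(\sum_j\theta_j h_1(t_j;\mbu)\bigr)W_\alpha(\d\mbu)$ is again an integral of an $L_\alpha(\R^2)$ function against $W_\alpha$, hence one-dimensional $\alpha$-stable by \eqref{chfIW}; since every linear combination of the coordinates of the vector is one-dimensional $\alpha$-stable, the vector is jointly $\alpha$-stable. The same applies to $\tilde Y_{\alpha,1}$.

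Stationary increments follow because the kernels are themselves increments of $g_0$. For $s,t\in\R_+$ the ``constant'' term $g_0(-\mbu)$ cancels, $Y_{\alpha,1}(t+s)-Y_{\alpha,1}(s)=\int_{\R^2}\bigl(g_0((t+s)\mbe_1-\mbu)-g_0(s\mbe_1-\mbu)\bigr)W_\alpha(\d\mbu)$, and the substitution $\mbu\mapsto\mbu+s\mbe_1$ together with translation invariance of the Lebesgue control measure of $W_\alpha$ yields $\{Y_{\alpha,1}(t+s)-Y_{\alpha,1}(s),\,t\in\R_+\}\eqfdd\{Y_{\alpha,1}(t),\,t\in\R_+\}$; the identical shift handles $\tilde Y_{\alpha,1}$.

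The self-similarity is the main technical point, though it is essentially a bookkeeping computation. Fix $\lambda>0$ and substitute $\mbu\mapsto(\lambda u_1,\lambda^{q_1/q_2}u_2)$ in $Y_{\alpha,1}(\lambda t)$; writing $\mu:=\lambda^{q_1}$ so that $\mu^{1/q_1}=\lambda$ and $\mu^{1/q_2}=\lambda^{q_1/q_2}$, and using the scaling $g_0(\mu^{1/q_1}x_1,\mu^{1/q_2}x_2)=\mu^{\chi}g_0(\mbx)$ — which follows from $\rho(\mu^{1/q_1}x_1,\mu^{1/q_2}x_2)=\mu\,\rho(\mbx)$ and the generalized invariance of $L$ in \eqref{glim} — one gets $h_1(\lambda t;(\lambda u_1,\lambda^{q_1/q_2}u_2))=\lambda^{q_1\chi}h_1(t;\mbu)$. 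Combining this with the self-similarity of $W_\alpha$ under the coordinate scaling $\mbu\mapsto(\lambda u_1,\lambda^{q_1/q_2}u_2)$, which multiplies the control measure by the Jacobian $\lambda^{1+q_1/q_2}=\lambda^{q_1Q}$ and hence contributes a factor $\lambda^{q_1Q/\alpha}$ in distribution — equivalently, by comparing the characteristic functions in \eqref{chfIW}, noting that $\omega_\alpha$ depends on its argument only through its sign, which is unchanged by multiplication by the positive constants $\lambda^{q_1\chi}$ and $\lambda^{H}$ — one obtains $Y_{\alpha,1}(\lambda t)\eqfdd\lambda^{q_1\chi+q_1Q/\alpha}Y_{\alpha,1}(t)$ jointly in $t\in\R_+$. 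It remains to rewrite the exponent using $q_1Q=1+q_1/q_2=1+p_1/p_2$ and $q_1\chi=q_1(Q-p_1/q_1)=q_1Q-p_1$: $q_1\chi+q_1Q/\alpha=q_1Q\,\tfrac{1+\alpha}{\alpha}-p_1=\tfrac{1+\alpha}{\alpha}(1+\tfrac{p_1}{p_2})-p_1=H_{\alpha,1}$. Repeating the substitution for $\tilde h_1$, now with $\partial_2 g_0(\mu^{1/q_1}x_1,\mu^{1/q_2}x_2)=\mu^{\chi-1/q_2}\partial_2 g_0(\mbx)$ (differentiate the preceding scaling relation in $x_2$), produces the extra factor $\lambda^{-q_1/q_2}$ and hence the index $q_1\chi+q_1Q/\alpha-q_1/q_2=\tilde H_{\alpha,1}$, using $q_1/q_2=p_1/p_2$. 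The indices for $Y_{\alpha,2},\tilde Y_{\alpha,2}$ follow by exchanging the two coordinates. (By the equivalences stated just before the corollary, and since $\tfrac{\alpha}{1+\alpha}<P<\alpha$ always holds, the index lies in $(0,1)$ in each of the four regions, so the FBM below is the ordinary one.)

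Finally, let $\alpha=2$. Then each process is a centered Gaussian process (a Gaussian integral against $W_2$), and the three properties just established force the covariance to be that of a scaled FBM: stationary increments give $\E(Y_{2,1}(t)-Y_{2,1}(s))^2=\E Y_{2,1}(|t-s|)^2$ for $t\ne s$, and $H$-self-similarity gives $\E Y_{2,1}(r)^2=r^{2H}\,\E Y_{2,1}(1)^2$ for $r\in\R_+$, whence $\E Y_{2,1}(t)Y_{2,1}(s)=\tfrac12\bigl(\E Y_{2,1}(t)^2+\E Y_{2,1}(s)^2-\E(Y_{2,1}(t)-Y_{2,1}(s))^2\bigr)=\E Y_{2,1}(1)^2\cdot R_{H}(t,s)$ with $H=H_{2,1}$, and $\E Y_{2,1}(1)^2=\|h_1(1;\cdot)\|_2^2=\sigma_1^2$ by \eqref{chfIW}. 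Since a centered Gaussian process is determined by its covariance, $Y_{2,1}\eqfdd\sigma_1 B_{H_{2,1}}$; likewise $\tilde Y_{2,1}\eqfdd\tilde\sigma_1 B_{\tilde H_{2,1}}$, and the cases $i=2$ are identical. The only genuinely delicate part of the argument is keeping the anisotropic scaling exponents straight and checking that they reduce to the expressions \eqref{defH}; the remaining steps are routine applications of the $L_\alpha$-bounds of Proposition~\ref{prelim} and of the elementary facts about $\alpha$-stable integrals recalled in Section~\ref{sec2}.
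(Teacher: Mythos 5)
Your proposal is correct and follows essentially the same route as the paper: well-definedness and $\alpha$-stability from Proposition \ref{prelim} and the elementary properties of $W_\alpha$-integrals, stationary increments from shift invariance of the control measure, self-similarity by the change of variables $\mbu \mapsto (\lambda u_1, \lambda^{q_1/q_2}u_2)$ combined with the scaling relations \eqref{fkappa} for $g_0$, $\partial_2 g_0$ and the $\lambda^{\frac{1}{\alpha}(1+\frac{p_1}{p_2})}$-scaling of $W_\alpha$, with the exponent bookkeeping reducing correctly to \eqref{defH}. The only difference is cosmetic: where the paper invokes the known characterization of Gaussian $H$-SS SI processes for \eqref{TUfbm}, you spell out the covariance computation, which is the same fact made explicit (note only that, as in the paper's statement, the identification $\E Y_{2,i}(1)^2=\|h_i(1;\cdot)\|_2^2$ tacitly normalizes the Gaussian parameter $\sigma$ in \eqref{def:chfW} to $1$).
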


\begin{proof}
	The fact that the processes in \eqref{TU} are well-defined and have $\alpha$-stable
	distribution follow from Proposition \ref{prelim} and general properties of stochastic integrals w.r.t.\ $\alpha$-stable random measure \cite{samo1994}.
	The stationarity of increments property is a consequence of the form of the integrands in \eqref{halpha} and the invariance of $W_\alpha $ w.r.t.\ to shifts in $\R^2 $.
	Similarly, the  $H$-SS property follows from change of variables in the stochastic integral; particularly, $\{ Y_{\alpha,1} (\lambda t), \, t \in \R_+ \} \eqfdd \{\lambda^{H_{\alpha,1}} Y_{\alpha,1}(t), \, t \in \R_+\}$ from
	\begin{align*}
	Y_{\alpha,1}(\lambda t)&=\int_{\R^2} (g_0(\lambda t - u_1, - u_2) - g_0(-u_1,-u_2))  W_{\alpha}(\d \mbu) \\
	&=\int_{\R^2} (g_0(\lambda t - \lambda u_1, - \lambda^{\frac{p_1}{p_2}}  u_2) - g_0(-\lambda u_1,-\lambda^{\frac{p_1}{p_2}}  u_2))  W_{\alpha}(\d \lambda u_1, \d \lambda^{\frac{p_1}{p_2}} u_2)
	\end{align*}
	by using $g_0(\lambda t_1, \lambda^{\frac{q_1}{q_2}} t_2) = \lambda^{\chi q_1} g_0(\mbt)$, $\chi q_1 = p_1(P-1)$, $ \frac{q_1}{q_2 }=\frac{p_1}{p_2}$, see \eqref{fkappa}, \eqref{pp}, and $W_\alpha ( \d \lambda u_1, \allowbreak \d \lambda^{\frac{p_1}{p_2}} u_2) \allowbreak \eqfdd \allowbreak \lambda^{\frac{1}{\alpha}(1 + \frac{p_1}{p_2})} W_\alpha (\d \mbu)$.
	The self-similarity $\{ {\tilde Y}_{\alpha,1} (\lambda t), \, t \in \R_+ \} = \{  \lambda^{\tilde H_{\alpha,1}} {\tilde Y}_{\alpha,1} (t), \, t \in \R_+ \}$ follows analogously from
	\begin{equation*}
	{\tilde Y}_{\alpha,1}(\lambda t)
	=\int_{\R^2} (\partial_2 g_0(\lambda t - \lambda u_1, - \lambda^{\frac{p_1}{p_2}} u_2) - \partial_2 g_0(-\lambda u_1,-\lambda^{\frac{p_1}{p_2}} u_2))  W_{\alpha}(\d \lambda u_1, \d \lambda^{\frac{p_1}{p_2}} u_2).
	\end{equation*}
	Finally, \eqref{TUfbm} is a consequence of the well-known characterization of Gaussian $H$-SS SI processes \cite{samo1994}.
\end{proof}

Next we define two classes of RFs on $\R^2_+$ following the definitions in \eqref{TU}.
The first class is defined as 
\begin{equation}\label{frakUdef}
	{\tilde \Upsilon}_{\alpha,1}(\mbt) := t_2
	{\tilde Y}_{\alpha,1}(t_1), \qquad  {\tilde \Upsilon}_{\alpha,2}(\mbt) := t_1 {\tilde Y}_{\alpha,2}(t_2), \qquad
\mbt \in \R^2_+.
\end{equation}
Thus, \eqref{frakUdef} are nothing but simple line extensions of the processes ${\tilde Y}_{\alpha, i}$, $i=1,2$: 
for any fixed $t_1$, ${\tilde \Upsilon}_{\alpha,1}(\mbt)$ is a half-line in $t_2$ with
random slope ${\tilde Y}_{\alpha, 1}(t_1)$, the definition of ${\tilde \Upsilon}_{\alpha,2}(\mbt)$ being analogous.

The definition of the second class of  RFs (corresponding to  $Y_{\alpha, i}$, $i=1,2 $) is more involved.
For $m \in \N$, let $Y^{(j)}_{\alpha,1}$, $j=0,1,\dots, m$, be independent copies of $Y_{\alpha, 1}$ in \eqref{TU}.
Pick $(t_{j,1},t_j) \in \R^2_+$, $j=1, \dots, m$, with different ordinates $t_1 < \cdots < t_m$.
Define finite dimensional distribution of a RF $\Upsilon_{\alpha,1}$:
\begin{equation}\label{T1def}
	\big\{ \Upsilon_{\alpha,1}(t_{j,1},t_j), \, j =1, \dots, m \big\}
	\eqd \big\{Y^{(j)}_{\alpha,1}(t_{j,1}) - Y^{(0)}_{\alpha,1}(t_{j,1}), \, j=1, \dots, m \big\}.
\end{equation}
This definition extends to arbitrary finite collection of points in $\R^2_+$. Namely, to extend \eqref{T1def}
let $(t_{j,k}, t_j) \in \R^2_+$, $k=1,\dots, n_j$, $j=1, \dots, m$, be given with $t_1 < \cdots < t_m$.
Then
\begin{equation}\label{frakT1def}
	\big\{ \Upsilon_{\alpha,1}(t_{j,k}, t_j), \, k=1,\dots,n_j, \, j =1, \dots, m \big\} \eqd \big\{ Y^{(j)}_{\alpha,1}(t_{j,k}) - Y^{(0)}_{\alpha,1}(t_{j,k}), \, k=1,\dots,n_j, \, j =1, \dots,m \big\}. \nn
\end{equation}
Similarly, we define a RF $\{ \Upsilon_{\alpha,2}(\mbt), \, \mbt \in \R^2_+ \}$ such that for each finite collection of points $(t_j,t_{j,k}) \in \R^2_+$, $k=1,\dots,n _j$, $j=1,\dots,m$, with abscissas $t_1<\dots < t_m$:
\begin{equation} \label{T2def}
	\big\{ \Upsilon_{\alpha,2}(t_j,t_{j,k}),\, k=1,\dots,n_j, \, j =1, \dots, m \big\}
	\eqd \big\{ Y^{(j)}_{\alpha,2}(t_{j,k}) - Y^{(0)}_{\alpha,2}(t_{j,k}),\, k=1,\dots,n_j, \, j=1, \dots, m \big\},
\end{equation}
where $Y^{(j)}_{\alpha,2}$, $j=0,1,\dots, m$, are independent copies of $Y_{\alpha, 2}$ in \eqref{TU}.
Finally, for $\mbt \in \R^2_+$, set
\begin{equation}\label{V0}
\Upsilon_{\alpha,0}(\mbt) :=  \int_{\R^2} h_0(\mbt; \mbu) W_\alpha(\d \mbu).
\end{equation}

The following corollary summarises the properties of the introduced RFs and details their MSS indices mentioned in the beginning of this section.

\begin{corollary} \label{corsum}
	Let $g_0$, $\alpha$, $p_i$, $i=1,2$, satisfy the conditions in Proposition \ref{prelim}.
	Then:
	\begin{itemize}
		\item[(i)] RF $\Upsilon_{\alpha,0}$ in \eqref{V0} is well-defined;
		\item[(ii)] RFs $\Upsilon_{\alpha, i}$, ${\tilde \Upsilon}_{\alpha, i}$ are well-defined in the parameter regions shown in Table \ref{tab2} for $Y_{\alpha, i}, {\tilde Y}_{\alpha, i}$ respectively, $i=1,2$;
		\item[(iii)] RFs $\Upsilon_{\alpha,0}$, $\Upsilon_{\alpha, i}$, ${\tilde \Upsilon}_{\alpha, i}$, $i=1,2$, have $\alpha$-stable finite dimensional distributions and stationary rectangular increments;
		\item[(iv)] ${\tilde \Upsilon}_{\alpha, i}$ 
		is MSS RF with index $\mbH = (\tilde H_{\alpha,1},1)$  if $i=1$, and $\mbH = (1,\tilde H_{\alpha,2})$ if $i=2$;
		\item[(v)] $\Upsilon_{\alpha, i}$ is MSS RF with index $\mbH = (H_{\alpha,1}, 0)$ if $i=1$, and $\mbH = (0,H_{\alpha,2})$ if $i=2$;
		\item[(vi)] In the Gaussian case $\alpha =2 $ RFs  $\Upsilon_{2, i}$, ${\tilde \Upsilon}_{2,i}$, $i=1,2$, agree with FBS, viz.,
		\begin{equation}
			\Upsilon_{2,1} \eqfdd \sigma_{1} B_{(H_{2,1},0)},  \quad \Upsilon_{2,2} \eqfdd \sigma_{2} B_{(0,H_{2,2})}, \quad
			{\tilde \Upsilon}_{2,1} \eqfdd
			\tilde \sigma_{1} B_{(\tilde H_{2,1},1)}, \quad
			{\tilde \Upsilon}_{2,2} \eqfdd
			\tilde \sigma_{2} B_{(1,\tilde H_{2,2})},
		\end{equation}
		where $ \sigma^2_{i}$, $\tilde \sigma^2_{i}$, $i=1,2$, are given in \eqref{TUfbm}.
	\end{itemize}
\end{corollary}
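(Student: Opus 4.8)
The six assertions can be established one by one, relying throughout on Proposition~\ref{prelim} and Corollary~\ref{propTU}. Under the standing hypotheses these give $h_0(\mbt;\cdot)$, $h_i(t;\cdot)$, $\tilde h_i(t;\cdot)\in L_\alpha(\R^2)$ in the respective parameter regions of Table~\ref{tab2}, so the integrals $Y_{\alpha,i},\tilde Y_{\alpha,i}$ in \eqref{TU} and $\Upsilon_{\alpha,0}$ in \eqref{V0} are well-defined $\alpha$-stable stochastic integrals with characteristic function \eqref{chfIW}: this settles (i) together with the $\alpha$-stability of $\Upsilon_{\alpha,0}$ in (iii), and it also yields the well-definedness and $\alpha$-stability of $\tilde\Upsilon_{\alpha,i}$ in \eqref{frakUdef}, since multiplying an $\alpha$-stable vector by the deterministic scalar $t_2$ (resp.\ $t_1$) produces an $\alpha$-stable vector. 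For the remainder of (ii), I would check that the prescription \eqref{T1def}--\eqref{T2def} passes Kolmogorov's consistency test: permutation invariance and marginalization both follow from the $Y^{(j)}_{\alpha,i}$ being i.i.d., so that relabelling the copies attached to distinct ordinates leaves the joint law unchanged and deleting all points sharing a given ordinate simply discards the associated copy. Since each vector $\{\Upsilon_{\alpha,i}(t_{j,k},t_j)\}$ is, by construction, a linear image of the independent family $(Y^{(0)}_{\alpha,i}(\cdot),\dots,Y^{(m)}_{\alpha,i}(\cdot))$ of $\alpha$-stable vectors, its law is $\alpha$-stable, which gives the first half of (iii).

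For the rectangular-increment stationarity in (iii) I would treat the three types separately. For $\Upsilon_{\alpha,0}$, summing $h_0(\cdot;\mbu)=g_0((-\mbu,\cdot-\mbu])$ over the four corners of a rectangle telescopes, so $\Upsilon_{\alpha,0}((\mba,\mbb])=\int_{\R^2}g_0((\mba-\mbu,\mbb-\mbu])W_\alpha(\d\mbu)$, and a common translation $\mba,\mbb\mapsto\mba+\mbc,\mbb+\mbc$ of all rectangles is absorbed by the substitution $\mbu\mapsto\mbu-\mbc$ and the translation invariance of $W_\alpha$. For $\tilde\Upsilon_{\alpha,1}$ (the case $i=2$ being symmetric), the rectangular increment over $(\mbs,\mbs+\mbt]$ collapses to $t_2(\tilde Y_{\alpha,1}(s_1+t_1)-\tilde Y_{\alpha,1}(s_1))$, so stationarity is that of the increments of $\tilde Y_{\alpha,1}$ from Corollary~\ref{propTU}. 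For $\Upsilon_{\alpha,1}$, writing out the four corners and noting that the two ordinates $a_2<b_2$ carry two independent copies of $Y_{\alpha,1}$ while the shared copy $Y^{(0)}_{\alpha,1}$ cancels, the increment equals $(Y^{(s)}_{\alpha,1}(b_1)-Y^{(s)}_{\alpha,1}(a_1))-(Y^{(r)}_{\alpha,1}(b_1)-Y^{(r)}_{\alpha,1}(a_1))$; a common translation preserves the ordering of ordinates, permutes the i.i.d.\ copies, and shifts their one-dimensional arguments, so the claim reduces to the stationarity of increments of $Y_{\alpha,1}$ — and in each case the same substitution/relabelling works jointly over any finite family of rectangles.

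For the MSS property, in (iv) I would use $\tilde\Upsilon_{\alpha,1}(\lambda_1t_1,\lambda_2t_2)=\lambda_2t_2\,\tilde Y_{\alpha,1}(\lambda_1t_1)\eqfdd\lambda_1^{\tilde H_{\alpha,1}}\lambda_2^1\,\tilde\Upsilon_{\alpha,1}(\mbt)$, the $\tilde H_{\alpha,1}$-self-similarity of $\tilde Y_{\alpha,1}$ being applied jointly over the finitely many first coordinates and multiplication by the deterministic factors preserving equality of finite-dimensional laws (case $i=2$ symmetric). In (v), scaling the second coordinate by $\lambda_2>0$ only rescales $t_1<\dots<t_m$ to $\lambda_2t_1<\dots<\lambda_2t_m$, hence preserves their ordering, and the finite-dimensional distributions in \eqref{frakT1def}, \eqref{T2def} depend on the ordinates exclusively through this ordering; this is precisely $H_2=0$ (resp.\ $H_1=0$). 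Scaling the first coordinate by $\lambda_1>0$ replaces $Y^{(j)}_{\alpha,1}(t_{j,k})$ by $Y^{(j)}_{\alpha,1}(\lambda_1t_{j,k})$, and since the copies are independent the $H_{\alpha,1}$-self-similarity of $Y_{\alpha,1}$ applies jointly to give the factor $\lambda_1^{H_{\alpha,1}}$, so $\Upsilon_{\alpha,1}(\lambda_1t_1,\lambda_2t_2)\eqfdd\lambda_1^{H_{\alpha,1}}\lambda_2^0\,\Upsilon_{\alpha,1}(\mbt)$.

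For (vi), when $\alpha=2$ all the RFs involved are centred Gaussian, so it suffices to match covariance functions. Each such covariance factors over the two coordinates: for the FBS by \eqref{FBScov}, for $\tilde\Upsilon_{2,i}$ because of its product form $t_{3-i}\tilde Y_{2,i}(t_i)$, and for $\Upsilon_{2,i}$ because the covariance of two points splits according to whether their $(3-i)$-th coordinates agree (then the copies $Y^{(j)}_{2,i}$ coincide, doubling the corresponding $R_{H_{2,i}}$) or differ (only the shared $Y^{(0)}_{2,i}$ contributes). Inserting $R_1(t,s)=ts$, $R_0(t,s)=1-\tfrac12 I(t\ne s)$ from Definition~\ref{defFBS} and $Y_{2,i}\eqfdd\sigma_iB_{H_{2,i}}$, $\tilde Y_{2,i}\eqfdd\tilde\sigma_iB_{\tilde H_{2,i}}$ from \eqref{TUfbm}, each covariance is identified with that of the asserted FBS with the stated Hurst indices and scale constants. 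The only real difficulty I anticipate is bookkeeping in (iii) and (v): one must keep track, jointly over an arbitrary finite configuration of points or rectangles, of which independent copy sits at which ordinate and verify that the cancellation of $Y^{(0)}_{\alpha,i}$ and the relabelling of the i.i.d.\ copies remain consistent under translations and dilations; no analytic estimate beyond Proposition~\ref{prelim} is needed.
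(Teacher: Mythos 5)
Your proposal is correct and follows essentially the same route as the paper: the paper's own proof dispatches items (i)--(iv) and (vi) with ``most facts follow from Proposition~\ref{prelim} and the definitions'' and only writes out the MSS property (v) of $\Upsilon_{\alpha,i}$, which you argue in exactly the same way (rescaling the ordinates preserves their ordering so the fdd construction is unaffected, while the $H_{\alpha,i}$-self-similarity of $Y_{\alpha,i}$ is applied jointly to the independent copies in the first coordinate). Your additional details --- Kolmogorov consistency of \eqref{T1def}--\eqref{T2def}, the telescoping/translation argument for stationary rectangular increments, and the covariance factorization in the Gaussian case --- are just careful fillings-in of what the paper leaves to the reader.
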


\begin{proof}
	Most facts in Corollary \ref{corsum} follow from Proposition \ref{prelim} and the definitions of the introduced RFs.
	Let us check the MSS property of $\Upsilon_{\alpha, i}$, viz.,
	\begin{equation}\label{Tmss}
	\big\{ \Upsilon_{\alpha, i}(\lambda_1 t_1, \lambda_2 t_2), \, \mbt \in \R^2_+ \big\}\eqfdd
	\big\{ \lambda_i^{H_{\alpha,i}} \Upsilon_{\alpha, i} (\boldsymbol{t}), \, \mbt \in \R^2_+ \big\}, \qquad \forall \lambda_1 \in \R_+, \ \lambda_2 \in \R_+,
	\end{equation}
	for $i=1,2$.
	Let  $\{ (t_{j,k}, t_j ) \in \R^2_+, \, k=1,\dots, n_j, \, j=1,\dots, m \}$ be an arbitrary collection of points with $t_1<t_2<\dots<t_{m}$.
	Then the rescaled collection $\{ (\lambda_1 t_{j,k}, \lambda_2 t_j ) \in \R^2_+, \, k=1,\dots, n_j, \, j=1,\dots, m \}$ satisfies the same property: $\lambda_2 t_1< \lambda_2 t_2<\dots< \lambda_2 t_{m}$.
	Therefore, with $Y_{\alpha,1}^{(j)}$, $j=0,1,\dots,n$, independent copies of $H_{\alpha,1}$-SS process $Y_{\alpha, 1}$ in \eqref{TU},
	\begin{align*}\label{frakTprime}
	&\big\{ \Upsilon_{\alpha,1}(\lambda_1 t_{j,k}, \lambda_2 t_j), \, k=1,\dots, n_j, \, j =1, \dots, m\big\}\\
	&\qquad\eqd \big\{ Y^{(j)}_{\alpha,1}( \lambda_1 t_{j,k}) - Y^{(0)}_{\alpha,1}(\lambda_1 t_{j,k}), \, k=1, \dots, n_j, \, j=1,\dots, m
	\big\} \\
	&\qquad\eqd \big\{ \lambda_1^{H_{\alpha,1}} ( Y^{(j)}_{\alpha,1}(t_{j,k}) - Y^{(0)}_{\alpha,1}(t_{j,k}) ), \, k=1, \dots, n_j, \,
	j=1,\dots, m
	\big\} \\
	&\qquad\eqd \big\{\lambda_1^{H_{\alpha,1}} \Upsilon_{\alpha,1}(t_{j,k},t_j), \, k=1, \dots, n_j, \, j=1,\dots, m \big\},
	\end{align*}
	proving \eqref{Tmss} for $i=1$.
\end{proof}

\section{Rectangent limits of L\'evy driven fractional RF}
\label{sec5}

The following Theorem \ref{mainthm} is the main result of our paper.

\begin{theorem} \label{mainthm}
	Let L\'evy driven fractional RF $X$ in \eqref{def:X}  satisfy Assumptions (G)$^0_\alpha$, (G)$_\alpha$ and (M)$_\alpha$; $0< \alpha \le 2$, $\frac{\alpha}{1+\alpha} < P < \alpha$, $P\ne 1$, $P_{\frac{1}{\alpha},\frac{1+\alpha}{\alpha}} \ne 1$, $P_{\frac{1+\alpha}{\alpha},\frac{1}{\alpha}} \ne  1$.
	Then the $\gamma$-rectangent RF in \eqref{Vlim} exists for any $\gamma >0$, $\mbt_0 \in \R^2_+$ and satisfies the trichotomy
	\begin{equation} \label{V3}
		V_\gamma =
		\begin{cases} V_+, &\gamma > \gamma_0, \\
			V_-, &\gamma < \gamma_0, \\
			V_0, &\gamma = \gamma_0,
		\end{cases}
	\end{equation}
	with $\gamma_0 = \frac{q_1}{q_2} = \frac{p_1}{p_2}$, 
	$V_0 := \Upsilon_{\alpha,0}$ defined in  \eqref{V0} and
	\begin{align}
		V_-
		:=
		\begin{cases}
			{\tilde \Upsilon}_{\alpha,2}, &P_{\frac{1}{\alpha},\frac{1+\alpha}{\alpha}} > 1, \\
			\Upsilon_{\alpha,1}, &P_{\frac{1}{\alpha},\frac{1+\alpha}{\alpha}} < 1,
		\end{cases} \qquad
		V_+
		:=
		\begin{cases}
			{\tilde \Upsilon}_{\alpha,1}, &P_{\frac{1+\alpha}{\alpha}, \frac{1}{\alpha}} > 1, \\
			\Upsilon_{\alpha,2}, &P_{\frac{1+\alpha}{\alpha},\frac{1}{\alpha}} < 1.
		\end{cases}
	\end{align}
	The normalization $d_{\lambda,\gamma} = \lambda^{H(\gamma)} $ in \eqref{Vlim}  is defined in the proof of Theorem \ref{mainthm}.
\end{theorem}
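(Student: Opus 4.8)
The plan is to reduce the rectangular increment to the case $\mbt_0=\0$ and then apply Proposition~\ref{offp}. By \eqref{Xinc1} and the stationarity of the rectangular increments of $X$ (the kernels $g^0_1,g^0_2,g^0_{12}$ are annihilated by the rectangular difference for any base point, and the law of $M$ is shift-invariant), for arbitrary $\mbt^{(1)},\dots,\mbt^{(n)}\in\R^2_+$ the vector $\{X((\mbt_0,\mbt_0+\lambda^{\Gamma}\mbt^{(r)}]),\,r\le n\}$ has the same law as $\{\int_{\R^2} g((-\mbu,\lambda^{\Gamma}\mbt^{(r)}-\mbu])\,M(\d\mbu),\,r\le n\}$, which is independent of $\mbt_0$. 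Hence, by the Cram\'er--Wold device, it suffices to prove that for all $\theta_1,\dots,\theta_n\in\R$,
\begin{equation*}
\int_{\R^2} f_\lambda(\mbu)\,M(\d\mbu) \limd \int_{\R^2} h(\mbu)\,W_\alpha(\d\mbu), \qquad \lambda\downarrow 0,
\end{equation*}
where $f_\lambda(\mbu):=\lambda^{-H(\gamma)}\sum_{r=1}^n\theta_r\,g((-\mbu,\lambda^{\Gamma}\mbt^{(r)}-\mbu])$ and $H(\gamma)$ and $h$ are identified below. In the well-balanced case and in two of the four unbalanced corners this will follow directly from Proposition~\ref{offp} with a single pair $(\mu_1,\mu_2)$; the two remaining corners require an additional localization argument.

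\emph{The three ``kernel'' cases.} Write $\gamma_0=q_1/q_2=p_1/p_2$ and recall $q_i\chi=p_i(P-1)$, $i=1,2$. When $\gamma=\gamma_0$ take $(\mu_1,\mu_2)=(1,\gamma_0)$ in \eqref{tildeg} and $H(\gamma_0):=\frac{1+\gamma_0}{\alpha}+q_1\chi$; substituting $\mbu=(\lambda u_1,\lambda^{\gamma_0}u_2)$ and using \eqref{fkappa} gives $f^{\dagger}_\lambda(\mbu)\to\sum_r\theta_r\,h_0(\mbt^{(r)};\mbu)$ pointwise, so $h$ is the linear combination of the kernels of $V_0=\Upsilon_{\alpha,0}$ in \eqref{V0}. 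When $\gamma>\gamma_0$ and $P_{\frac{1+\alpha}{\alpha},\frac{1}{\alpha}}>1$, take again $(\mu_1,\mu_2)=(1,\gamma_0)$: now the vertical side $\lambda^{\gamma}t_2$ of the rectangular increment is negligible relative to the natural vertical scale $\lambda^{\gamma_0}$, so the mean value theorem gives $g((-\mbu,\lambda^{\Gamma}\mbt-\mbu])=\lambda^{\gamma}t_2\,[\partial_2 g(\lambda t_1-u_1,-u_2)-\partial_2 g(-u_1,-u_2)]+(\text{lower-order terms})$, which after rescaling converges to $t_2\,\tilde h_1(t_1;\mbu)$, the kernel of $\tilde\Upsilon_{\alpha,1}$ (cf.\ \eqref{halpha}, \eqref{frakUdef}); here $H(\gamma):=\tilde H_{\alpha,1}+\gamma$. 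Symmetrically, for $\gamma<\gamma_0$ and $P_{\frac{1}{\alpha},\frac{1+\alpha}{\alpha}}>1$ take $(\mu_1,\mu_2)=(\gamma/\gamma_0,\gamma)$, linearize the short horizontal side, and obtain the kernel $t_1\,\tilde h_2(t_2;\mbu)$ of $\tilde\Upsilon_{\alpha,2}$ with $H(\gamma):=1+\gamma\tilde H_{\alpha,2}$. In each of these three cases one verifies: (i) $f^{\dagger}_\lambda(\mbu)\to h(\mbu)$ for a.e.\ $\mbu$, using \eqref{gg0} to replace $g,\partial_i g$ by $g_0,\partial_i g_0$ near the origin together with the elementary expansion of the short side; and (ii) a single $L_\alpha$-dominating function for $f^{\dagger}_\lambda$ built from the bounds \eqref{g12bdp} and the triangle inequality \eqref{rhoineq2} for $\rho_0$, whose $L_\alpha$-norm is finite precisely by Proposition~\ref{prelim}(i)--(iii) (this is where $\frac{\alpha}{1+\alpha}<P<\alpha$ and the signs of $P_{\frac{1}{\alpha},\frac{1+\alpha}{\alpha}}-1$, $P_{\frac{1+\alpha}{\alpha},\frac{1}{\alpha}}-1$ are used). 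Then $\|f^{\dagger}_\lambda-h\|_\alpha\to 0$ by dominated convergence (or Pratt's lemma, as in the proof of Proposition~\ref{offp}), and Proposition~\ref{offp} together with linearity of the $W_\alpha$-integral and Cram\'er--Wold yields \eqref{V3} in these regimes.

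\emph{The two ``degenerate'' corners.} These are $\gamma<\gamma_0$ with $P_{\frac{1}{\alpha},\frac{1+\alpha}{\alpha}}<1$ (limit $V_-=\Upsilon_{\alpha,1}$) and $\gamma>\gamma_0$ with $P_{\frac{1+\alpha}{\alpha},\frac{1}{\alpha}}<1$ (limit $V_+=\Upsilon_{\alpha,2}$), and they are the main obstacle, since $\Upsilon_{\alpha,i}$ is not a single stochastic integral. Consider $\gamma<\gamma_0$: group the points $\mbt^{(r)}$ by their ordinate, with distinct values $t_1<\dots<t_m$. For a point with ordinate $t_j$ the rectangular increment, as a function of the integration variable $\mbu$, is concentrated on two horizontal slabs: one near $u_2=0$ (``lower edge'', the same for all classes) and one near $u_2=\lambda^{\gamma}t_j$ (``upper edge''). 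For $u_1$ of order $\lambda$ the natural $u_2$-width of such a slab is of order $\lambda^{\gamma_0}=o(\lambda^{\gamma})$, so the $m+1$ slabs are pairwise disjoint for small $\lambda$ and the pieces of $M$ supported on them are independent. Rescaling $u_1$ by $\lambda$ and, inside each slab, $u_2$ by $\lambda^{\gamma_0}$, and using $g_0(\lambda t_1,\lambda^{\gamma_0}t_2)=\lambda^{q_1\chi}g_0(\mbt)$ with the bounds \eqref{g12bdp}, the upper-edge piece attached to $t_j$ converges to $Y^{(j)}_{\alpha,1}(\cdot)$ and the shared lower-edge piece to $-Y^{(0)}_{\alpha,1}(\cdot)$, with $Y^{(0)}_{\alpha,1},\dots,Y^{(m)}_{\alpha,1}$ independent copies of $Y_{\alpha,1}$ — which is exactly the definition \eqref{T1def} of $\Upsilon_{\alpha,1}$; here the right normalization is $H(\gamma):=H_{\alpha,1}$, independent of $\gamma$, consistent with $\Upsilon_{\alpha,1}$ being $(H_{\alpha,1},0)$-MSS (Corollary~\ref{corsum}(v)). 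The corner $\gamma>\gamma_0$ is treated by the same argument with the two coordinates interchanged, giving $\Upsilon_{\alpha,2}$ (definition \eqref{T2def}) and $H(\gamma):=\gamma H_{\alpha,2}$.

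The technical heart of these last two corners is: (a) showing that the remainder — overlap of the slabs, and the ``bulk'' of $\mbu$-space far from every edge — is negligible in $L_\alpha$, for which one uses the decay bound $|\partial_{12}g_0|\le C\rho_0^{-1}$ from \eqref{g12bdp} together with $P<\alpha$ via \eqref{intrho}; and (b) making rigorous the limit on each slab and the asymptotic independence, which is done by carrying through, for the localized integrands on each slab, the characteristic-function computation of the proof of Proposition~\ref{offp} (split $M=M_1+M_2$, integrate by parts, apply Pratt's lemma), so that the characteristic function of $\int f_\lambda\,M(\d\mbu)$ converges to that of $\sum_r\theta_r\,(Y^{(j_r)}_{\alpha,i}(\cdot)-Y^{(0)}_{\alpha,i}(\cdot))$. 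Once (a) and (b) are established, Cram\'er--Wold gives \eqref{V3} in all regimes, with $d_{\lambda,\gamma}=\lambda^{H(\gamma)}$ as specified above.
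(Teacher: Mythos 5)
Your proposal follows essentially the same route as the paper: reduction to $\mbt_0=\0$ via \eqref{Xinc1}, Proposition~\ref{offp} with the same choices of $(\mu_1,\mu_2)$ and the same normalizing exponents for the well-balanced case and the two $\tilde\Upsilon_{\alpha,i}$ cases (linearizing the short side of the rectangle), and, for the two $\Upsilon_{\alpha,i}$ corners, the same localization onto disjoint slabs around the edges yielding independent copies of $Y_{\alpha,i}$ exactly as in \eqref{T1def}, \eqref{T2def}, with the remaining pieces shown negligible in $L_\alpha$. The only sketch-level imprecision is the treatment of the far-field remainder (unscaled $|\mbu|\ge\epsilon$, where $g$ is no longer controlled by the power bounds \eqref{g12bdp}): in the paper this is killed using Assumption (G)$^0_\alpha$ (via Jensen's inequality for $1\le\alpha\le 2$, or the monotone majorants $\bar g_i,\bar g_{12}$ for $0<\alpha<1$) together with the positive exponent gaps $1+\gamma_0-H(\gamma_0)=\frac{p_1}{\alpha}(\alpha-P)>0$ and $1-H_{\alpha,1}=p_1(1-P_{\frac{1}{\alpha},\frac{1+\alpha}{\alpha}})>0$, not by the near-origin bounds on $g_0$ alone.
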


\begin{proof}
		\underline {Case $\gamma = \gamma_0$.}
	Let $H(\gamma_0) =  (1+ \gamma_0) \frac{1 + \alpha}{\alpha} - p_1 = (1+ \gamma_0) \frac{1}{\alpha} + q_1 \chi$, $\Gamma_0 := \operatorname{diag}(1, \gamma_0) $. Then $\lambda^{-H(\gamma_0)} X((\boldsymbol{0}, \lambda^{\Gamma_0} \boldsymbol{t}])
	= \int_{\R^2} f_{\lambda} (\mbt; \mbu) M(\d \mbu)$, where
	$f_{\lambda} (\mbt; \mbu) = \lambda^{-H(\gamma_0)} g ((-\mbu, \lambda^{\Gamma_0}\mbt - \boldsymbol{u}])$, $\mbt \in \R^2_+$, $\mbu \in  \R^2$.
	It suffices to show that for any $\theta_i \in \R$, $\mbt_i \in \R^2_+$, $i=1, \dots, m$, $m \ge 1$,
	\begin{equation}
	\sum_{i=1}^m \theta_i \int_{\R^2} f_{\lambda} (\mbt_i; \mbu) M(\d \mbu) \ \limd \  \sum_{i=1}^m \theta_i\int_{\R^2}  h_0(\mbt_i; \mbu) W_\alpha(\d \mbu),
	\end{equation}
	where $h_0(\mbt; \mbu) = g_0((-\mbu, \mbt - \mbu])$, $\boldsymbol{t} \in \R^2_+$, $\boldsymbol{u} \in \R^2$, as in \eqref{halpha}. Using Proposition \ref{offp} this follows from
	\begin{equation} \label{normf}
	\Big\| \sum_{i=1}^m \theta_i (f^\dagger_\lambda (\mbt_i; \cdot) - h_0(\mbt_i; \cdot))\Big\|_{\alpha} \to 0.
	\end{equation}
	Obviously, it suffices to prove \eqref{normf} for $m= \theta_1 = 1$, $\mbt_1 = \mbt $. Letting $\mu_1 = 1, \mu_2 = \gamma_0$, we have $f^\dagger_\lambda (\mbt; \mbu) = \lambda^{-\chi q_1} g_0((- \lambda^{\Gamma_0} \mbu, \lambda^{\Gamma_0}(\mbt - \boldsymbol{u})]) + o(1) = h_0(\mbt; \mbu) + o(1)$ for any $\mbu \neq \boldsymbol{0}, (0,t_2), (t_1,0), \mbt$, by \eqref{gg0}. Consequently, we have \eqref{normf} by the DCT using \eqref{gg0}, \eqref{g12bd} similarly as in the proof of Proposition~\ref{prelim}~(i).
	Indeed, the above relations imply the existence of $\epsilon >0$ such that for all $0< |\mbu| < \epsilon$,
	\begin{equation}\label{grbd}
	|g(\mbu)| \le  C\rho(\mbu)^\chi, \qquad |\partial_i g(\mbu)| \le C\rho(\mbu)^{\chi - \frac{1}{q_i}},  \quad i=1,2,  \qquad
	|\partial_{12} g(\mbu)| \le C \rho(\mbu)^{\chi - Q}.
	\end{equation}
	Letting $\mbt = \1$ and using \eqref{grbd} we have that for all
	$\lambda>0$ small enough,
	$|\boldsymbol{u}| < \epsilon$,
	\begin{align*}
	|f^\dagger_\lambda (\1; \mbu)| &\le C \lambda^{-\chi q_1} \big(\rho(\lambda^{\Gamma_0}(\1 - \mbu))^\chi + \rho(\lambda^{\Gamma_0}(\mbe_2-\mbu))^\chi + \rho(\lambda^{\Gamma_0}(\mbe_1-\mbu) )^\chi +
	\rho(-\lambda^{\Gamma_0} \mbu)^\chi \big)\\
	&= C \big(\rho(\1 - \mbu)^\chi + \rho(\mbe_2-\mbu)^\chi + \rho(\mbe_1-\mbu)^\chi + \rho(-\mbu)^\chi \big),
	\end{align*}
	where the dominating function is $\alpha$-integrable  on $|\mbu| < \epsilon$, see \eqref{intrho}. Similarly, for all $|\lambda^{\Gamma_0} \mbu| < \epsilon \le |\mbu|$,
	\begin{equation*}
	|f^\dagger_\lambda (\1; \mbu) |
	=  \Big| \lambda^{-\chi q_1} \int_{[\0,\1]} \partial_{12} g(\lambda^{\Gamma_0}(\mbs - \mbu)) \d \mbs \Big|
	\le  C \int_{[\0,\1]} \rho(\mbs-\mbu)^{\chi-Q} \d \mbs,
	\end{equation*}
	where the dominating function is $\alpha$-integrable on $|\mbu| \ge \epsilon$.
	Finally, we have $\int_{|\lambda^{\Gamma_0} \mbu| \ge \epsilon} | f^\dagger_\lambda (\1; \mbu) |^\alpha \d \mbu = \lambda^{\alpha(1+\gamma_0-H(\gamma_0))} I_\lambda = o(1)$, because $1+\gamma_0-H(\gamma_0) = q_1(Q(1-\frac{1}{\alpha})-\chi) = \frac{p_1}{\alpha} (\alpha- P) > 0$ and
	\begin{align*}
	I_\lambda &= \int_{|\mbu| \ge \epsilon} \Big| \int_{[\0,\1]} \partial_{12} g (\lambda^{\Gamma_0} \mbs - \mbu) \d \mbs \Big|^\alpha \d \mbu\\
	&\le  \int_{|\mbu| \ge  \epsilon} \int_{[\0,\1]} |\partial_{12} g(\lambda^{\Gamma_0} \mbs - \mbu)|^\alpha \d \mbs \d \mbu
	\le \int_{|\mbu| \ge \frac{\epsilon}{2}} |\partial_{12} g(\mbu)|^\alpha \d \mbu < \infty
	\end{align*}
for $1\le \alpha \le 2 $ follows by Jensen's inequality from Assumption (G)$^0_\alpha$; for $0< \alpha < 1 $
relation $I_\lambda  = O(1)$ is a  consequence of the monotonicity of the dominating function $\bar g_{12}$ in  \eqref{G02},
Assumption (G)$^0_\alpha$.
This completes the proof of \eqref{normf}.

	\medskip

	\noi \underline {Case $\gamma < \gamma_0, P_{\frac{1}{\alpha},\frac{1+\alpha}{\alpha}} < 1$.} Let $H(\gamma) =  H_{\alpha,1}$.
	It suffices to prove the convergence
	\begin{equation}\label{convTlim}
	\sum_{i=1}^n \theta_i \lambda^{-H_{\alpha,1}}  X((\boldsymbol{0}, \lambda^{\Gamma} \boldsymbol{t}_i])
	\limd \sum_{i=1}^n \theta_i \Upsilon_{\alpha,1}(\mbt_i)
	\end{equation}
	for any $\theta_i \in \R$, $\mbt_i \in \R^2_+$, $i=1,\dots, n$, $n \ge 1$.
	Write the collection $\{\mbt_i, \, i=1,\dots, n \}= \{ (t^{(j)}_{k,1}, t^{(j)}_2), \, k=1,\dots, n^{(j)}, \, j=1,\dots m \}$ as a union of $m$ sub-collections of points in $\R^2_+$ with different ordinates, viz., $t^{(1)}_2< t^{(2)}_2 < \dots < t^{(m)}_2$ and $\sum_{j=1}^m n^{(j)} = n$.
	Set $t_2^{(0)} := 0$ and first let $m=1$.
	For $\mbt = (t_1, t_2) \in \R^2_+$ such that $t_2 = t_2^{(1)}$, split $\lambda^{-H_{\alpha,1}} X((\0, \lambda^\Gamma \mbt]) = {\bar X}^{(1)}_\lambda (t_1) - {\bar X}^{(0)}_\lambda (t_1)$, where
	\begin{equation}\label{Sdecomp}
	{\bar X}^{(j)}_\lambda (t_1) = \int_{\R^2} f_\lambda (t_1;\mbu- \lambda^\gamma t_2^{(j)} \mbe_2) M(\d \mbu), \qquad j=0,1,
	\end{equation}
	with $f_\lambda (t_1;\mbu) = \lambda^{-H_{\alpha,1}} (g (\lambda t_1 \mbe_1-\mbu) - g (-\mbu))$, $\mbu \in \R^2$.
	Note the two processes $\{{\bar X}^{(j)}_\lambda (t_1), \, t_1 \in \R_+\}$, $j=0,1$, 
	are identically distributed and asymptotically independent as it follows from the argument below.
	Let us prove that as $\lambda \downarrow 0$,
	\begin{equation}\label{Y0}
	\big({\bar X}^{(0)}_\lambda (t_1), {\bar X}^{(1)}_\lambda (t_1)\big) \limfdd \big(Y^{(0)}_{\alpha,1} (t_1), Y^{(1)}_{\alpha,1} (t_1)\big), 
	\end{equation}
	where $Y^{(j)}_{\alpha,1}$, $j=0,1$, are independent copies of $Y_{\alpha,1}$ in \eqref{TU}.
	For this purpose, write $f_\lambda (t_1;\mbu)$ as a sum of
	\begin{align}\label{def:flambdai}
	f_{\lambda,0}(t_1;\mbu) &= f_\lambda (t_1;\mbu)  I(|u_2|< \epsilon \lambda^\gamma, |\mbu|< \epsilon),\\
	f_{\lambda,1} (t_1;\mbu) &= f_\lambda (t_1;\mbu) I(|u_2| \ge \epsilon\lambda^\gamma, |\mbu| < \epsilon),\nn\\
	f_{\lambda,2} (t_1;\mbu) &= f_\lambda (t_1; \mbu) I(|\mbu| \ge \epsilon)\nn
	\end{align}
	for all $ \mbu \in \R^2$ and some small enough $\epsilon>0$.
	Then, using the fact that the processes $\{\int_{\R^2} f_{\lambda,0} (t_1; \mbu-\lambda^\gamma t_2^{(j)} \mbe_2) M(\d \mbu), \, t_1 \in \R_+ \} $, 
	$j=0,1$, are independent and have the same distribution, relation \eqref{Y0} follows if we show that 
	with $h_1$ defined by \eqref{halpha},
	\begin{equation}\label{Sone}
	\int_{\R^2} f_{\lambda,0} (t_1;\mbu) M(\d \mbu) \limfdd \int_{\R^2} h_1 (t_1;\mbu) W_\alpha(\d \mbu)= Y_{\alpha,1} (t_1) \ \ \text{and} \ \ \| f_{\lambda,i} (t_1; \cdot) \|_\alpha = o(1), \ \ i=1,2.
	\end{equation}
	The first relation in \eqref{Sone} follows by Proposition \ref{offp} with $\mu_1=1$, $\mu_2 = \gamma_0$ from $\|f^\dagger_{\lambda,0} (t_1; \cdot) - h_1 (t_1; \cdot) \|_\alpha = o(1)$, which in turn follows by the DCT.
	Indeed, since $\gamma<\gamma_0$ we have $f^\dagger_{\lambda,0} (t_1;\mbu)= f^\dagger_\lambda (t_1;\mbu) I(| u_2|<\epsilon \lambda^{\gamma-\gamma_0}, |\lambda^{\Gamma_0} \mbu| < \epsilon) \sim f^\dagger_\lambda (t_1;\mbu)$, where
	\begin{align*}
	f^\dagger_\lambda (t_1;\mbu) = \lambda^{-q_1 \chi} (g(\lambda^{\Gamma_0} (t_1 \mbe_1 -\mbu)) - g(-\lambda^{\Gamma_0} \mbu))
	\sim g_0(t_1 \mbe_1 - \mbu)-g_0(-\mbu) = h_1 (t_1;\mbu)
	\end{align*}
	using \eqref{gg0} and $\lambda^{-q_1 \chi} o(\rho( \lambda^{\Gamma_0} \mbu)^\chi) = o(1)$ for any $\mbu \neq \0$, $t_1 \mbe_1$.
	Moreover,
	\begin{align*}
	|f^\dagger_{\lambda,0} (t_1; \mbu)| \le|f^\dagger_\lambda (t_1;\mbu)| I (|\lambda^{\Gamma_0} \mbu| < \epsilon)
	&\le C \Big( \big( \rho_0 (t_1 \mbe_1-\mbu)^{P-1} + \rho_0 (-\mbu)^{P-1} \big) I\big( |\mbu| < 1 \big)\\
	&\qquad+ \int_0^{t_1} \rho_0 (s\mbe_1 - \mbu)^{\frac{1}{p_2}-1} \d s  I\big( |\mbu| \ge 1 \big) \Big),
	\end{align*}
	because $|\lambda^{\Gamma_0} \mbu| < \epsilon$ implies $|\lambda^{\Gamma_0} (t_1 \mbe_1 -\mbu)| < 2\epsilon$, and consequently we can use \eqref{gg0}, \eqref{g12bdp} and \eqref{g12bd} to bound the terms of $f^\dagger_\lambda (t_1; \mbu)$ and integrand of rewritten $f^\dagger_\lambda (t_1;\mbu) = \lambda^{1-\chi q_1} \int_0^{t_1} \partial g_1 (\lambda^{\Gamma_0} (s \mbe_1 - \mbu) ) \d s$. The above function, which dominates $f^\dagger_{\lambda,0} (t_1; \cdot)$, belongs to $L_\alpha (\R^2)$, see the proof of Proposition \ref{prelim}~(ii).
	The above domination also holds for $f_{\lambda,1}^\dagger (t_1; \cdot)$, except that now we have
	\begin{equation*}
	f^\dagger_{\lambda,1} (t_1;\mbu) =
	f^\dagger_\lambda (t_1;\mbu) I(| u_2| \ge \epsilon \lambda^{\gamma-\gamma_0}, |\lambda^{\Gamma_0} \mbu| < \epsilon) = o(1)
	\end{equation*}
	point-wise (since $\lambda^{\gamma-\gamma_0} \to \infty$), resulting in $\| f_{\lambda,1}^\dagger (t_1; \cdot) \|_\alpha = \| f_{\lambda,1} (t_1; \cdot) \|_\alpha = o(1)$.
	Finally, rewriting $
	\int_{\R^2} | f_{\lambda,2} (t_1; \mbu) |^\alpha \d \mbu 
	=\int_{|\mbu| \ge \epsilon} | f_\lambda (t_1; \mbu) |^\alpha \d \mbu$ with $f_\lambda (t_1;\mbu) =\lambda^{1-H_{\alpha,1}} \int_0^{t_1} \partial_1 g(\lambda s \mbe_1 - \mbu) \d s$ we get $\|  f_{\lambda,2} (t_1; \cdot ) \|_\alpha
	= o(1)$ since $1-H_{\alpha,1} =p_1 (1-P_{\frac{1}{\alpha}, \frac{1+\alpha}{\alpha}})> 0$ and $\int_{|\mbu| \ge \epsilon} |\int_0^{t_1} \partial_1 g(\lambda s \mbe_1 - \mbu) \d s |^\alpha \d \mbu < C $ follows from Assumption (G)$^0_\alpha$ similarly as in the case $\gamma = \gamma_0$ above.
	This completes the proof of \eqref{Y0} or \eqref{convTlim} for $m=1$.

	The above argument easily extends to the general case $1 \le m \le n$ in  \eqref{convTlim}.
	Let $1\le j \le m$. For $\mbt = (t_1,t_2) \in \R^2_+$ such that $t_2 = t_2^{(j)}$, consider a similar decomposition as in \eqref{Sdecomp}, viz., $\lambda^{-H_{\alpha,1}} X((\boldsymbol{0}, \lambda^{\Gamma}\mbt ]) = \bar X_\lambda^{(j)} (t_1) - \bar X_\lambda^{(0)} (t_1)$, where
	\begin{equation*}
	\bar X^{(i)}_\lambda(t_1)
	=  \int_{\R^2} f_\lambda (t_1; \mbu - \lambda^\gamma t_2^{(i)} \mbe_2) M(\d \mbu), \qquad i= 0,j,
	\end{equation*}
	with $f_\lambda (t_1; \mbu) = \sum_{i=0}^2 f_{\lambda,i}(t_1; \mbu)$ for all $\mbu \in \R^2$ and some $\epsilon>0$ as in \eqref{def:flambdai}.
	If $\epsilon>0$ is small enough, then $\{ \int_{\R^2} f_{\lambda,0} (t_1;\mbu- \lambda^\gamma t_2^{(j)} \mbe_2) M(\d \mbu), \, t_1 \in \R_+ \}$, $j=0, \dots, m$, are independent, moreover,
	\begin{equation*}
	\int_{\R^2} f_{\lambda,0} (t_1;\mbu- \lambda^\gamma t_2^{(j)} \mbe_2) M(\d \mbu) \limfdd Y_{\alpha,1}^{(j)} (t_1), \qquad j=0,\dots, m,
	\end{equation*}
	whereas other terms are negligible as in \eqref{Sone}, which proves \eqref{convTlim}.

	\medskip

	\noi \underline {Case $\gamma < \gamma_0, P_{\frac{1}{\alpha},\frac{1+\alpha}{\alpha}} > 1$.} Let $H(\gamma) = 1 + \gamma \tilde H_{\alpha,2}$. It suffices to prove one-dimensional convergence
	\begin{equation}\label{convU}
	\lambda^{-1- \gamma \tilde H_{\alpha,2}} X((\boldsymbol{0}, \lambda^{\Gamma} \boldsymbol{t}]) \limd t_1 {\tilde Y}_{\alpha,2}(t_2)
	=  t_1 \int_{\R^2} \tilde h_{2}(t_2;\mbu)  W_\alpha(\d \mbu)
	\end{equation}
	for arbitrary $\mbt = (t_1,t_2) \in \R^2_+$ with $\tilde h_{2}(t; \mbu) = \partial_1  g_0(t \mbe_2 - \mbu) - \partial_1 g_0(-\mbu)$, $t\in \R_+$, $\mbu \in \R^2$ in \eqref{halpha}.
	Since the l.h.s.\ of \eqref{convU} writes as $\int_{\R^2} f_{\lambda} (\mbt; \mbu) M(\d \mbu)$ then using Proposition \ref{offp} with $\mu_1 := \frac{\gamma}{\gamma_0}$, $\mu_2 := \gamma $ it suffices to prove
	\begin{equation}\label{f3}
	\| f^\dagger_{\lambda} (\mbt; \cdot) - t_1 \tilde h_{2}(t_2;\cdot)\|_\alpha = o(1),
	\end{equation}
	where with
	$\Gamma'_0 = \operatorname{diag}(\frac{\gamma}{\gamma_0}, \gamma)$, $\lambda' = \lambda^{1-\frac{\gamma}{\gamma_0}} \downarrow 0$,
	\begin{align*}
	f^\dagger_{\lambda} (\mbt; \mbu) &=  \lambda^{\frac{1}{\alpha} (\frac{\gamma}{\gamma_0} + \gamma) -1- \gamma \tilde H_{\alpha,2}} g ((- \lambda^{\Gamma'_0} \mbu, \lambda^\Gamma \mbt- \lambda^{\Gamma'_0} \mbu ]) \\
	&=\lambda^{\frac{\gamma}{\gamma_0} - \gamma \chi q_2}
	\int_0^{t_1} \big\{
	\partial_1 g(\lambda^{\Gamma'_0} (\lambda' s \mbe_1 + t_2 \mbe_2 -\mbu))
	- \partial_1 g (\lambda^{\Gamma'_0} (\lambda' s \mbe_1 - \mbu ) ) \big\} \d s
	\end{align*}
	satisfies
	\begin{align*}
	f^\dagger_{\lambda} (\mbt; \mbu)
	&\sim \lambda^{\frac{\gamma}{\gamma_0} - \gamma \chi q_2}
	\int_0^{t_1} \big\{
	\partial_1 g_0 (\lambda^{\Gamma'_0} (\lambda' s \mbe_1 + t_2 \mbe_2 -\mbu))
	- \partial_1 g_0 (\lambda^{\Gamma'_0} (\lambda' s \mbe_1 - \mbu ) ) \big\} \d s\\
	&= \int_0^{t_1} \big\{
	\partial_1 g_0 (\lambda' s \mbe_1 + t_2 \mbe_2 -\mbu)
	- \partial_1 g_0 (\lambda' s \mbe_1 - \mbu ) \big\} \d s
	\sim t_1 \tilde h_2(t_2; \mbu)
	\end{align*}
	for almost all $\mbu \in \R^2_+$.
	To get the dominated function it is convenient to use $\chi q_2 = p_2(P-1)$ and $|\partial_1 g_0(\mbu)| \le C \rho_0(\mbu)^{\frac{1}{p_2} - 1}$, see \eqref{g12bdp}. Whence setting $\mbt = \1$, for
	$2^p > \rho_0 (\mbu) \ge 
	2^p \lambda'^{p_1}$,
	we get
	\begin{equation}\label{dctarg1}
	|f^\dagger_{\lambda} (\1; \mbu)| \le  C \int_0^1 (\rho_0( \lambda's \mbe_1 + \mbe_2 - \mbu)^{\frac{1}{p_2} - 1} +\rho_0(\lambda' s \mbe_1 - \mbu)^{\frac{1}{p_2} - 1} ) \d s,
	\end{equation}
	where $\rho_0 (\lambda' s \mbe_1 - \mbu)^{\frac{1}{p}} \ge \rho_0 (\mbu)^{\frac{1}{p}} - \rho_0(\lambda' s \mbe_1)^{\frac{1}{p}} \ge \frac{1}{2} \rho_0(\mbu)^{\frac{1}{p}}$ with $\int_{\rho_0(\mbu) < 2^p} \rho_0 (\mbu)^{\alpha(\frac{1}{p_2}-1)} \d \mbu < \infty$, see the proof of Proposition \ref{prelim}~(iii), and similar domination holds for the first term on the r.h.s.\ of \eqref{dctarg1}. For $\rho_0 (\mbu) \ge 2^p > \rho_0(\lambda^{\Gamma'_0} \mbu)$ we write
	\begin{align*}
	|f^\dagger_{\lambda} (\1; \mbu)| &=\lambda^{\gamma p_2}
	\Big|\int_{[\0,\1]} \partial_{12} g
	(\lambda^{\Gamma'_0} ( \lambda' s_1 \mbe_1 + s_2 \mbe_2 -\mbu ) ) \d \mbs \Big| \\
	&\le C \int_{[\0,\1]}\rho_0(\lambda' s_1 \mbe_1 + s_2 \mbe_2 - \mbu)^{-1} \d \mbs \le C \rho_0(\mbu)^{-1}
	\end{align*}
	with $\int_{\rho_0(\mbu) \ge 2^p} \rho_0 (\mbu)^{-\alpha} \d \mbu < \infty $, see the proof of Proposition \ref{prelim}~(iii). For $\rho_0(\mbu) < 2^p \lambda'^{p_1}$, we get
	\begin{align*}
	|f^\dagger_\lambda (\1;\mbu)| \le C (\lambda')^{-1} &(\rho_0(\lambda' \mbe_1+\mbe_2-\mbu)^{P-1} + \rho_0(\mbe_2-\mbu)^{P-1}\\
	&+ \rho_0(\lambda' \mbe_1-\mbu)^{P-1} + \rho_0(-\mbu)^{P-1}),
	\end{align*}
where	$ \int_{\rho_0(\mbu) <2^p \lambda'^{p_1}} \rho_0(\mbu)^{\alpha(P-1)} \d \mbu  = o(\lambda'^\alpha)$
	since $1+\frac{p_1}{p_2} + p_1 \alpha (P-1) > \alpha$ or $P_{\frac{1}{\alpha},\frac{1+\alpha}{\alpha}} > 1$, hence,
	$\int_{\rho_0(\mbu) <2^p \lambda'^{p_1}} |f^\dagger(\1;\mbu)|^\alpha \d \mbu = o (1)$.
	In the same manner,
	$\int_{\rho_0(\mbe_2-\mbu) <2^p \lambda'^{p_1}} |f^\dagger(\1;\mbu)|^\alpha \d \mbu = o (1)$.
	Finally, $\int_{\rho_0(\lambda^{\Gamma'_0} \mbu) \ge 2^p} |f_\lambda^\dagger (\1; \mbu)|^\alpha \d \mbu  = \lambda^{\alpha \gamma (1-\tilde H_{\alpha,2})} I_\lambda = o(1)$, since $1-\tilde H_{\alpha,2} =  \frac{p_2}{\alpha} (\alpha-P) >0$ and
$I_\lambda = \int_{\rho_0(\mbu) \ge 2^p} | \int_{[\0,\1]} \partial_{12} g (\lambda^{\Gamma} \mbs - \mbu) \d \mbs |^\alpha \d \mbu < C $
follows from Assumption (G)$^0_\alpha$ as in the two previous cases of $\gamma$.
	This proves \eqref{f3} and completes the proof of Theorem~\ref{mainthm}.
\end{proof}

Extending Theorem \ref{mainthm} we may ask the following two questions:  q1) what happens in the region  $P > \alpha $?  q2) when the (anisotropic) $\gamma$-rectangent limits in \eqref{Vlim}
agree with the L\'evy sheet $W_\alpha$ in \eqref{LSheet}?
(Obviously, the limits $V_\gamma $ in Theorem \ref{mainthm} \emph{never} agree with $W_\alpha $).

Concerning q1), recall Table \ref{tab1}, where in the parameter region $R_{11}$ the unbalanced limits $V_\pm $ of Theorem \ref{mainthm} are MSS RFs with indices $(H_1,H_2)\to (1,1)$ as $P \to \alpha $, the last fact an easy observation from formulas in \eqref{defH}.
One may expect that for $P> \alpha $ all scaling limits are $(1,1)$-MSS RFs of the form $\{t_1 t_2 V, \, (t_1,t_2) \in \R^2_+\}$, where $V $ is a r.v.
See \cite[Prop.~8.2.10]{samo2016} for related fact in the case of one-parameter processes.
As shown in Proposition \ref{propdif} this is true indeed under  weaker conditions on the infinitely divisible random measure $M$ which do not imply asymptotic stability.

We recall some facts about stochastic integrals w.r.t.\ $M$  whose characteristics $(\sigma,\nu)$ satisfy either Assumption (M)$_\alpha$ for $\alpha=2$, or $\sigma =0$, $\sup_{y>0}  y^\alpha \nu(\{u \in \R : |u|>y \}) <  \infty$ for some $0<\alpha<2$, moreover, $\nu$ is  symmetric for $\alpha=1$. Then
$M$ can be represented as $M(\d \mbu) = \sigma W_2(\d \mbu) +
\int_{\R} y \tilde N(\d \mbu, \d y) $
with $\tilde N = (N-n) I (1 < \alpha \le 2) + N I (0< \alpha < 1) + (N'-N'')I (\alpha = 1)$, where
$W_2$ is a Gaussian random measure on $\R^2$ with intensity $\d \mbu$,
$N$ is a Poisson random measure on $\R^2 \times \R$ with intensity $n (\d \mbu, \d y) = \d \mbu \nu (\d y)$, $N'$, $N''$ are Poisson random measures on $\R^2 \times \R$ with the same intensity $\frac{1}{2}n$,
$W_2$, 
$N$, $N'$, $N''$ are mutually independent.
Thus, stochastic integral w.r.t.\  $M$ can be defined as
$\int_{\R^2} f(\mbu)  M(\d \mbu) = \sigma \int_{\R^2} f(\mbu)  W_2(\d \mbu) + \int_{\R^2\times \R} f(\mbu) y \tilde N(\d \mbu, \d y)$,
where the last integral exists for any $f: \R^2 \to \R$ satisfying $\int_{\R^2} V (f(\mbu)) \d \mbu <\infty$, where
\begin{equation*} \label{fint}
V (u) =
\int_{\R} \big( (|uy| \wedge |uy|^2) I(1<\alpha\le 2) + (1 \wedge |uy|) I(0<\alpha<1) + (1 \wedge |uy|^2) I(\alpha= 1) \big) \nu (\d y)
\end{equation*}		
for $u \in \R$,
see \cite{rajp1989}, \cite[Sec.~3]{pils2014}.
Integrating the last expression by parts w.r.t.\ $y$ we obtain  that it 
does not exceed $C |u|^\alpha$ hence the integral  $\int_{\R^2} f(\mbu)  M(\d \mbu)$  is well-defined for any $f \in L_\alpha(\R^2)$.
Moreover, $f_\lambda \to f $  in $ L_\alpha(\R^2)$ implies $\int_{\R^2} f_\lambda (\mbu)  M(\d \mbu) \limd \int_{\R^2} f(\mbu)  M(\d \mbu) $.

\begin{proposition} \label{propdif}
	Let $X$ be a L\'evy driven RF given by \eqref{def:X}, where the L\'evy characteristics $(\sigma, \nu)$ of $M$ satisfy either Assumption (M)$_\alpha$ for $\alpha =2$ or $\sigma = 0 $, $\sup_{y>0}  y^\alpha \nu(\{u \in \R : |u|>y \}) <  \infty$ for some $0< \alpha < 2 $, moreover, $\nu$ is symmetric for $\alpha=1$. Let \eqref{G01} hold, where the kernel $g$ admits the $L_\alpha(\R^2)$-derivative $\partial_{12} g $ in the sense that
	\begin{equation}\label{gnormconv}
	\lim_{t_i \downarrow 0, \, i=1,2} \int_{\R^2} \big| (t_1 t_2)^{-1} g((\mbu, \mbu + \mbt]) - \partial_{12} g(\mbu)\big|^\alpha \d \mbu =  0.
	\end{equation}
	Then for any $\gamma >0$, as  $\lambda \downarrow 0$,
	\begin{equation*}
		\lambda^{-1-\gamma} X((\boldsymbol{0}, \lambda^\Gamma \mbt]) \limfdd t_1 t_2 V, \qquad \text{where }
		V := \int_{\R^2} \partial_{12} g(\mbu) M(\d \mbu). 
	\end{equation*}
\end{proposition}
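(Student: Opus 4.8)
The plan is to reduce the assertion to an $L_\alpha(\R^2)$-convergence of the deterministic integrands and then invoke the continuity of the stochastic integral with respect to a fixed $M$ recalled just before the statement (namely, $f_\lambda \to f$ in $L_\alpha(\R^2)$ implies $\int_{\R^2} f_\lambda(\mbu)\,M(\d\mbu) \limd \int_{\R^2} f(\mbu)\,M(\d\mbu)$, valid for the present broader class of $M$). First I would use \eqref{Xinc1} to write, for $\mbt = (t_1,t_2) \in \R^2_+$,
\[
\lambda^{-1-\gamma} X((\boldsymbol{0},\lambda^\Gamma\mbt]) = \int_{\R^2} f_\lambda(\mbt;\mbu)\,M(\d\mbu), \qquad f_\lambda(\mbt;\mbu) := \lambda^{-1-\gamma} g((-\mbu,\lambda^\Gamma\mbt-\mbu]),
\]
and record the exact identity $f_\lambda(\mbt;\mbu) = t_1 t_2 (h_1 h_2)^{-1} g((-\mbu,-\mbu+\mbh])$ with mesh $\mbh = \mbh_\lambda := \lambda^\Gamma\mbt = (\lambda t_1, \lambda^\gamma t_2)$, so that $h_1 h_2 = \lambda^{1+\gamma} t_1 t_2$ and $h_i \downarrow 0$, $i=1,2$, as $\lambda \downarrow 0$, for every $\gamma > 0$. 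Hypothesis \eqref{gnormconv} says precisely that the second-order rectangular difference quotient $(h_1 h_2)^{-1} g((\cdot,\cdot+\mbh])$ converges in $L_\alpha(\R^2)$ to $\partial_{12}g$ as the mesh tends to $\boldsymbol{0}$; applying it along the path $\mbh = \mbh_\lambda$ and substituting $\mbu \mapsto -\mbu$ gives $\|f_\lambda(\mbt;\cdot) - t_1 t_2\,\partial_{12}g(-\cdot)\|_\alpha \to 0$ as $\lambda \downarrow 0$.

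Next I would pass to joint convergence. For arbitrary $\mbt_k = (t_{k,1},t_{k,2}) \in \R^2_+$ and $\theta_k \in \R$, $k=1,\dots,m$, the triangle inequality in $L_\alpha(\R^2)$ when $\alpha \ge 1$, and the subadditivity $|a+b|^\alpha \le |a|^\alpha + |b|^\alpha$ when $0 < \alpha < 1$, give
\[
\Big\| \sum_{k=1}^m \theta_k f_\lambda(\mbt_k;\cdot) - \Big(\sum_{k=1}^m \theta_k t_{k,1} t_{k,2}\Big)\partial_{12}g(-\cdot) \Big\|_\alpha \to 0 .
\]
Hence, by the continuity property quoted above, $\sum_{k=1}^m \theta_k \lambda^{-1-\gamma} X((\boldsymbol{0},\lambda^\Gamma\mbt_k]) \limd \big(\sum_{k=1}^m \theta_k t_{k,1} t_{k,2}\big) \int_{\R^2} \partial_{12}g(-\mbu)\,M(\d\mbu)$. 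Since the characteristic function in \eqref{chfIM} is unchanged when $f(\cdot)$ is replaced by $f(-\cdot)$, the law of $M$ is invariant under the reflection $\mbu \mapsto -\mbu$, so $\int_{\R^2} \partial_{12}g(-\mbu)\,M(\d\mbu) \eqd V$ and this limit coincides in law with $\sum_{k=1}^m \theta_k\, t_{k,1} t_{k,2}\, V$. As $\theta_1,\dots,\theta_m$ are arbitrary, the Cram\'er--Wold device yields $\{\lambda^{-1-\gamma} X((\boldsymbol{0},\lambda^\Gamma\mbt_k])\}_{k=1}^m \limd \{t_{k,1} t_{k,2} V\}_{k=1}^m$, which is the asserted convergence $\lambda^{-1-\gamma} X((\boldsymbol{0},\lambda^\Gamma\mbt]) \limfdd t_1 t_2 V$. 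Along the way one should note that $V$ is well-defined: by \eqref{Xinc1} the field $X((\boldsymbol{0},\mbt])$ is a finite linear combination of integrals of the form \eqref{def:X}, so $\mbu \mapsto g((-\mbu,\mbt-\mbu])$ lies in $L_\alpha(\R^2)$ by \eqref{G01}; by \eqref{gnormconv}, $\partial_{12}g$ is an $L_\alpha$-limit of such functions, hence $\partial_{12}g \in L_\alpha(\R^2)$ by completeness of $L_\alpha(\R^2)$.

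I do not expect a genuine obstacle here: the analytic content is already packaged into hypothesis \eqref{gnormconv} (which precisely expresses the existence of the $L_\alpha$-derivative $\partial_{12}g$) and into the previously recorded continuity of the stochastic integral for this class of $M$. The only points requiring care are bookkeeping ones: checking that the anisotropic rescaling $\lambda^\Gamma$ together with the normalisation $\lambda^{-1-\gamma}$ converts the rescaled rectangular increment exactly into a difference quotient whose mesh $\mbh_\lambda$ tends to $\boldsymbol{0}$ for every $\gamma>0$; the harmless reflection $\mbu \mapsto -\mbu$ needed to identify the limit with $V = \int_{\R^2}\partial_{12}g(\mbu)\,M(\d\mbu)$ rather than with $\int_{\R^2}\partial_{12}g(-\mbu)\,M(\d\mbu)$; and handling $0 < \alpha < 1$ with subadditivity of $|\cdot|^\alpha$ in place of the triangle inequality. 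None of these is more than routine.
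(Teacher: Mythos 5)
Your argument is correct and is essentially the paper's own proof, which is stated in one line as "follows from \eqref{Xinc1}, \eqref{gnormconv} and the criterion for the convergence of stochastic integrals before the proposition"; you have simply made explicit the bookkeeping the paper leaves implicit (the identification of $\lambda^{-1-\gamma}g((-\mbu,\lambda^\Gamma\mbt-\mbu])$ with a difference quotient of mesh $\lambda^\Gamma\mbt$, the Cram\'er--Wold step for joint convergence, and the reflection $\mbu\mapsto-\mbu$ handled via the distributional invariance of $M$ under reflection of Lebesgue intensity). No gaps; the details you flag (completeness of $L_\alpha$ giving $\partial_{12}g\in L_\alpha$, subadditivity of $|\cdot|^\alpha$ for $\alpha<1$) are handled correctly.
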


\begin{proof}
	Follows from \eqref{Xinc1}, \eqref{gnormconv} and the criterion for the convergence of stochastic integrals before the proposition.
\end{proof}

Clearly,  \eqref{gnormconv} is not directly related to the asymptotic form of $g(\mbt)$ at $\0$ as specified in Assumption (G)$_\alpha$.
On the other hand if $g$ satisfies the latter assumption then from   \eqref{gg0}, \eqref{g12bd}, \eqref{g12bdp} and \eqref{intrho} we see that $\|\partial_{12} g\|_\alpha  < \infty $ and \eqref{gnormconv} hold when $P> \alpha$, relating Proposition \ref{propdif} to question q1).
Concerning question q2), in order that the rectangent limits are given by a L\'evy Sheet, we replace the power law behavior of $g$ in Assumption (G)$_\alpha$ by a condition that $g$ is bounded but discontinuous at $\boldsymbol{0}$.
Let $\R^2_{ij} := \{\mbt \in \R^2: \operatorname{sgn}(t_1) = i, \operatorname{sgn}(t_2) = j \}$, $i,j  = \pm 1$, denote the 4 open quadrants in $\R^2 $.

\medskip

\noi {\bf \emph{Assumption} (G)$^\prime_\alpha$.} \ The function $g$ has finite limits $g_{ij} = \lim_{|\mbt| \to 0,\,  \mbt \in \R^2_{ij}} g(\mbt)$, $i,j\in \{1,-1\}$, on each quadrant such that
\begin{equation}\label{g0}
	g[\boldsymbol{0}] := \sum_{i,j\in \{1,-1\}} ij g_{ij} \neq  0.
\end{equation}
Moreover,
\begin{equation}\label{gint}
	\int_{\R^2} \big| g((-\mbu, \mbt - \mbu]) -
	g[\boldsymbol{0}] I(\mbu \in (\boldsymbol{0}, \mbt])\big|^\alpha \d \mbu = o(t_1 t_2), \qquad t_i \downarrow 0, \ i=1,2.
\end{equation}

\medskip

Note that condition \eqref{g0} and the existence of the `limites quadrantales' $g_{ij}$ is typical for distribution functions in $\R^2 $  having an atom at $\boldsymbol{0}$ or more generally, functions from the Skorohod space $D(\R^2)$, which  are discontinuous at the origin, see \cite{bick1971}.
The most simple  example of  $g$ satisfying \eqref{g0} and \eqref{gint} is  the function taking constant values on each quadrant, viz.,
\begin{equation}\label{gind}
	g(\mbt) = \sum_{i,j\in \{1,-1\}} g_{ij} I( \mbt \in \R^2_{ij}), \qquad \mbt \in \R^2.
\end{equation}
For $g$ in \eqref{gind}, $ g ((-\mbu, \boldsymbol{t}-\mbu]) = g[\boldsymbol{0}] I( \mbu \in (\boldsymbol{0}, \mbt))$ implying
\begin{equation}\label{Xlevy0}
	\lambda^{- \frac{1}{\alpha}(1+\gamma)} X((\boldsymbol{0}, \lambda^\Gamma \mbt])
	=  g[\boldsymbol{0}] \lambda^{-\frac{1}{\alpha}(1+\gamma)} M((\boldsymbol{0}, \lambda^\Gamma \mbt))
	 \limfdd  g [\boldsymbol{0}] W_\alpha(\mbt)
\end{equation}
as in Remark \ref{rem3}.
A similar result holds for more general $g$ satisfying Assumption (G)$^\prime_\alpha$.

\begin{proposition} \label{proplevy}
	Let $X$ be a L\'evy driven RF in \eqref{def:X} satisfying \eqref{G01}, Assumptions 
	(G)$^\prime_\alpha$ and (M)$_\alpha$ for some $0 < \alpha \le 2 $.
	Then for any $\gamma >0$,
	\begin{equation}\label{Xlevy}
	\lambda^{-\frac{1}{\alpha}(1+\gamma)} X((\boldsymbol{0}, \lambda^\Gamma \mbt])
		\limfdd g[\boldsymbol{0}] W_\alpha(\mbt).
	\end{equation}
\end{proposition}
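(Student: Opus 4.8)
The plan is to write the left-hand side of \eqref{Xlevy} as a stochastic integral with respect to $M$ and invoke Proposition~\ref{offp}. By \eqref{Xinc1}, for every $\mbt\in\R^2_+$ and $\lambda>0$ we have $\lambda^{-\frac{1}{\alpha}(1+\gamma)}X((\boldsymbol{0},\lambda^\Gamma\mbt])=\int_{\R^2}f_\lambda(\mbt;\mbu)\,M(\d\mbu)$ with $f_\lambda(\mbt;\mbu):=\lambda^{-\frac{1}{\alpha}(1+\gamma)}g((-\mbu,\lambda^\Gamma\mbt-\mbu])$, and $f_\lambda(\mbt;\cdot)\in L_\alpha(\R^2)$ because it is a finite linear combination of the integrand in \eqref{def:X}, which lies in $L_\alpha(\R^2)$ by \eqref{G01}. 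Taking $\mu_1=1$, $\mu_2=\gamma$ in \eqref{tildeg} and using $(\lambda u_1,\lambda^\gamma u_2)=\lambda^\Gamma\mbu$, the re-scaled function is
\begin{equation*}
	f^\dagger_\lambda(\mbt;\mbu) = g\big((-\lambda^\Gamma\mbu,\,\lambda^\Gamma(\mbt-\mbu)]\big), \qquad \mbu\in\R^2.
\end{equation*}

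Next I would verify condition \eqref{condf} with $h(\mbu):=g[\boldsymbol{0}]\,I(\mbu\in(\boldsymbol{0},\mbt])\in L_\alpha(\R^2)$. This choice of $h$ is forced: on $(\boldsymbol{0},\mbt]$ the four defining terms of the rectangular increment $g((-\lambda^\Gamma\mbu,\lambda^\Gamma(\mbt-\mbu)])$ converge, by Assumption (G)$^\prime_\alpha$, to $g_{1,1}$, $g_{-1,1}$, $g_{1,-1}$, $g_{-1,-1}$ respectively (their arguments tending to $\boldsymbol{0}$ through the corresponding quadrants), so $f^\dagger_\lambda(\mbt;\mbu)\to g[\boldsymbol{0}]$; off $(\boldsymbol{0},\mbt]$ these terms cancel pairwise in the limit, so $f^\dagger_\lambda(\mbt;\mbu)\to 0$. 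To obtain the required $L_\alpha$ convergence I would change variables $\mbv=\lambda^\Gamma\mbu$ (so $\d\mbu=\lambda^{-(1+\gamma)}\d\mbv$ and $\mbu\in(\boldsymbol{0},\mbt]\Leftrightarrow\mbv\in(\boldsymbol{0},\lambda^\Gamma\mbt]$), which gives
\begin{equation*}
	\|f^\dagger_\lambda(\mbt;\cdot)-h\|_\alpha^\alpha = \lambda^{-(1+\gamma)}\int_{\R^2}\big|g((-\mbv,\lambda^\Gamma\mbt-\mbv]) - g[\boldsymbol{0}]\,I(\mbv\in(\boldsymbol{0},\lambda^\Gamma\mbt])\big|^\alpha\,\d\mbv,
\end{equation*}
and then apply \eqref{gint} with $(t_1,t_2)$ replaced by $(\lambda t_1,\lambda^\gamma t_2)\to\boldsymbol{0}$: the integral on the right is $o((\lambda t_1)(\lambda^\gamma t_2))=o(t_1 t_2\,\lambda^{1+\gamma})$, hence $\|f^\dagger_\lambda(\mbt;\cdot)-h\|_\alpha\to 0$ as $\lambda\downarrow 0$ for each fixed $\mbt$. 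Proposition~\ref{offp} then yields $\lambda^{-\frac{1}{\alpha}(1+\gamma)}X((\boldsymbol{0},\lambda^\Gamma\mbt])\limd g[\boldsymbol{0}]\int_{\R^2}I(\mbu\in(\boldsymbol{0},\mbt])\,W_\alpha(\d\mbu)=g[\boldsymbol{0}]W_\alpha(\mbt)$, i.e.\ the one-dimensional convergence in \eqref{Xlevy}.

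Finally, to pass to finite-dimensional distributions I would use the Cram\'er--Wold device exactly as in the proof of Theorem~\ref{mainthm}: for any $m\ge1$, $\theta_i\in\R$, $\mbt_i\in\R^2_+$, the linear combination $\sum_{i=1}^m\theta_i\lambda^{-\frac{1}{\alpha}(1+\gamma)}X((\boldsymbol{0},\lambda^\Gamma\mbt_i])=\int_{\R^2}\big(\sum_i\theta_i f_\lambda(\mbt_i;\mbu)\big)M(\d\mbu)$, and by the triangle inequality together with the estimate above applied to each $\mbt_i$ the associated re-scaled function $\sum_i\theta_i f^\dagger_\lambda(\mbt_i;\cdot)$ converges in $L_\alpha(\R^2)$ to $g[\boldsymbol{0}]\sum_i\theta_i I(\cdot\in(\boldsymbol{0},\mbt_i])$; Proposition~\ref{offp} then gives convergence in distribution to $g[\boldsymbol{0}]\sum_i\theta_i W_\alpha(\mbt_i)$, which is \eqref{Xlevy}. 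The only substantive step is the $L_\alpha$ estimate in the second paragraph, and it is short once one notices that Assumption (G)$^\prime_\alpha$ is designed precisely so that the $o(t_1 t_2)$ gain in \eqref{gint} absorbs the $\lambda^{-(1+\gamma)}$ Jacobian factor; so I do not anticipate any real obstacle here.
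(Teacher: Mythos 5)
Your proof is correct and follows essentially the same route as the paper: the paper splits $X=X'+X''$ with $X'=g[\boldsymbol{0}]M((\boldsymbol{0},\cdot])$ handled by \eqref{Xlevy0} and $X''$ shown negligible via \eqref{gint}, which is exactly your $L_\alpha$ estimate, merely packaged as a single application of Proposition~\ref{offp} with $h=g[\boldsymbol{0}]I(\cdot\in(\boldsymbol{0},\mbt])$ instead of two terms. The change of variables absorbing the Jacobian $\lambda^{-(1+\gamma)}$ into the $o(t_1t_2)$ bound of \eqref{gint}, and the Cram\'er--Wold reduction, match the paper's argument.
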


\begin{proof}
	Split $X ((\boldsymbol{0},\mbt]) = X'((\boldsymbol{0},\mbt]) + X''((\boldsymbol{0},\mbt])$, where $X'((\boldsymbol{0},\mbt]) := g [\boldsymbol{0}] M (\boldsymbol{0},  \mbt])$,  $X''((\boldsymbol{0},\mbt]) := \int_{\R^2}  (g((-\mbu, \mbt - \mbu]) - g [\boldsymbol{0}] I(\mbu \in (\boldsymbol{0}, \mbt]) ) M(\d \mbu)$.
	Then  $\lambda^{-\frac{1}{\alpha}(1+\gamma)} X'((\boldsymbol{0}, \lambda^\Gamma \mbt]) \limfdd g [\boldsymbol{0}] W_\alpha(\mbt) $ according to \eqref{Xlevy0}, while $\lambda^{-\frac{1}{\alpha}(1+\gamma)} X'' ((\boldsymbol{0}, \lambda^\Gamma \mbt])
	\limfdd  0 $ in view of condition \eqref{gint}.
\end{proof}

\section{Tangent limits of L\'evy driven fractional RF}
\label{sec6}

In this section we identify $\gamma$-tangent limits in \eqref{Tlim} of L\'evy driven fractional RF  in \eqref{Xinc2}, viz.,
\begin{equation}\label{X2}
X(\mbt) = \int_{\R^2} \big\{g (\boldsymbol{t} - \boldsymbol{u}) - g^0_{12}(-\mbu)\big\} M(\d \mbu), \qquad  \mbt \in \R^2,
\end{equation}
with stationary increments whose distribution depends on $g$ and the infinitely divisible random measure $M$.

\begin{theorem} \label{mainthm2}
	Let $X$ be a L\'evy driven fractional RF in \eqref{X2}, where $M$ satisfies Assumption (M)$_\alpha$, $0< \alpha \le 2$, and $\int_{\R^2} |g(\mbt - \mbu) - g^0_{12}(-\mbu)|^\alpha \d \mbu < \infty$ for all $\mbt \in \R^2$.
	Let $g(\mbt)= g_0(\mbt)(1 + o(1))$ as $|\mbt| \to 0$, where $g, g_0$ have partial derivatives $\partial_i$, $i=1,2$,  in \eqref{partialf} on $\R^2_0$  
	such that
	\begin{equation}\label{gtan}
	|g(\mbt)| + |g_0(\mbt)| \le C\rho(\mbt)^\chi, \qquad |\partial_i g(\mbt)| +
		|\partial_i g_0(\mbt)| \le C\rho(\mbt)^{\chi - \frac{1}{q_i}},  \quad  i=1,2,
	\end{equation}
	for all $\mbt \in \R_0^2$. Moreover, $g_0(\mbt) = \rho(\mbt)^\chi L(\mbt)$, $\mbt \in \R^2_0$, as in \eqref{glim}, where $L$ is a generalized invariant function and the parameters $\chi <0$, $q_i>0$, $i=1,2$, with $Q=\frac{1}{q_1}+\frac{1}{q_2}$ satisfy
	\begin{eqnarray}\label{qin2}
&		- \frac{1}{\alpha} Q < \chi < \frac{1}{q_1 \vee q_2} - \frac{1}{\alpha} Q.
	\end{eqnarray}
	Then the $\gamma$-tangent limits  $T_\gamma $ in \eqref{Tlim} of $X$  exist for any $\gamma >0$ under   normalization $d_{\lambda,\gamma} = \lambda^{H(\gamma)}$, $H(\gamma) := (1 \wedge \frac{\gamma}{\gamma_0}) H(\gamma_0)$, $H(\gamma_0) := \frac{1+\gamma_0}{\alpha} + \chi q_1$ and are given by
	\begin{equation}
		T_\gamma = \begin{cases} T_+, &\gamma > \gamma_0, \\
			T_-, &\gamma < \gamma_0, \\
			T_0, &\gamma = \gamma_0,
		\end{cases}
	\end{equation}
	where $\gamma_0 = \frac{q_1}{q_2}$, and
	\begin{equation}\label{T2}
	T_0(\mbt) :=  \int_{\R^2} \big\{g_0(\boldsymbol{t} - \boldsymbol{u}) - g_0(-\mbu)\big\} W_\alpha (\d \mbu), \qquad  \mbt \in \R^2_+,
	\end{equation}
	and $W_\alpha$ is $\alpha$-stable random measure as in Theorem \ref{mainthm}, and the `unbalanced' $\gamma$-tangent RFs
	\begin{equation*}
	T_+ (\mbt) := T_0(t_1,0),  \qquad T_- (\mbt) := T_0(0,t_2), \qquad \mbt = (t_1,t_2)\in \R^2_+,
	\end{equation*}
	depend on only one coordinate on the plane.
\end{theorem}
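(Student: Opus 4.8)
The plan is to reduce \eqref{Tlim} to Proposition~\ref{offp} and to treat the three regimes $\gamma=\gamma_0$, $\gamma>\gamma_0$, $\gamma<\gamma_0$ separately. First I would use the stationarity of the increments of $X$ in \eqref{X2}: by translation invariance of $M$ one has $X(\mbt_0+\mbs)-X(\mbt_0)\eqfdd\int_{\R^2}\{g(\mbs-\mbu)-g(-\mbu)\}M(\d\mbu)$, where the integrand lies in $L_\alpha(\R^2)$ by the triangle inequality (Minkowski when $\alpha\ge1$, subadditivity of $|\cdot|^\alpha$ when $\alpha<1$) applied to the hypothesis $\int_{\R^2}|g(\mbt-\mbu)-g^0_{12}(-\mbu)|^\alpha\d\mbu<\infty$. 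Hence the $\gamma$-tangent limit is independent of $\mbt_0$ and of $g^0_{12}$. Writing $f_\lambda(\mbt;\mbu):=\lambda^{-H(\gamma)}\{g(\lambda^\Gamma\mbt-\mbu)-g(-\mbu)\}$, by the Cram\'er--Wold device it suffices to show that for every finite family $\{(\theta_i,\mbt_i)\}$ the re-scaled function \eqref{tildeg} of $\sum_i\theta_i f_\lambda(\mbt_i;\cdot)$, for a suitable choice of $\mu_1,\mu_2>0$, converges in $L_\alpha(\R^2)$ to $\sum_i\theta_i$ times the corresponding kernel of the candidate limit; then Proposition~\ref{offp} yields \eqref{Tlim}.

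For $\gamma=\gamma_0$ I take $\mu_1=1$, $\mu_2=\gamma_0$ and set $\Gamma_0:=\operatorname{diag}(1,\gamma_0)$. Since $H(\gamma_0)=\tfrac{1+\gamma_0}{\alpha}+\chi q_1$, the powers of $\lambda$ cancel and $f^\dagger_\lambda(\mbt;\mbu)=\lambda^{-\chi q_1}\{g(\lambda^{\Gamma_0}(\mbt-\mbu))-g(-\lambda^{\Gamma_0}\mbu)\}$. Using $g(\mbt)=g_0(\mbt)(1+o(1))$ and the homogeneity relation $g_0(\lambda^{1/q_1}t_1,\lambda^{1/q_2}t_2)=\lambda^\chi g_0(\mbt)$ (namely \eqref{fkappa} with $\lambda$ replaced by $\lambda^{q_1}$ so that $\lambda^{\Gamma_0}$ appears), one gets $f^\dagger_\lambda(\mbt;\mbu)\to g_0(\mbt-\mbu)-g_0(-\mbu)$ for a.e.\ $\mbu$, the integrand of $T_0$ in \eqref{T2}. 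For the $L_\alpha$-domination (hence \eqref{condf} by the DCT) I split $\{|\mbu|<\epsilon\}$, where \eqref{gtan} gives $|f^\dagger_\lambda(\mbt;\mbu)|\le C(\rho(\mbt-\mbu)^\chi+\rho(-\mbu)^\chi)$, which is $\alpha$-integrable near its singularities because $\alpha\chi>-Q$, and $\{|\mbu|\ge\epsilon\}$, where writing the increment as $\int_0^1\mbt\cdot\nabla g(\lambda^{\Gamma_0}(s\mbt-\mbu))\,\d s$ and using $|\partial_i g(\mbv)|\le C\rho(\mbv)^{\chi-1/q_i}$ produces a majorant $\asymp\rho(\mbu)^{\chi-1/q_1}+\rho(\mbu)^{\chi-1/q_2}$, $\alpha$-integrable at infinity because $\chi<\tfrac{1}{q_1\vee q_2}-\tfrac{Q}{\alpha}$; together these two requirements are exactly \eqref{qin2}. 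The same change of variables in the stable integral shows that $T_0$ is well-defined and satisfies $T_0(\lambda^{\Gamma_0}\mbt)\eqfdd\lambda^{H(\gamma_0)}T_0(\mbt)$.

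For $\gamma>\gamma_0$ I keep $\mu_1=1$, $\mu_2=\gamma_0$ (so $H(\gamma)=H(\gamma_0)$), and note that the vertical displacement becomes $\lambda^\gamma t_2-\lambda^{\gamma_0}u_2=\lambda^{\gamma_0}(\lambda^{\gamma-\gamma_0}t_2-u_2)\sim-\lambda^{\gamma_0}u_2$; hence $f^\dagger_\lambda(\mbt;\mbu)\to g_0(t_1\mbe_1-\mbu)-g_0(-\mbu)=h_1(t_1;\mbu)$ from \eqref{halpha}, with the same dominating function as above, so the limit is $\int_{\R^2}h_1(t_1;\mbu)W_\alpha(\d\mbu)=T_0(t_1,0)=Y_{\alpha,1}(t_1)=:T_+(\mbt)$, cf.\ \eqref{TU}. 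For $\gamma<\gamma_0$ I take instead $\mu_1=\gamma/\gamma_0$, $\mu_2=\gamma$, so that $\lambda^{\mu_1}=\mu^{1/q_1}$, $\lambda^{\mu_2}=\mu^{1/q_2}$ with $\mu:=\lambda^{\gamma q_2}$ (using $q_1/\gamma_0=q_2$); then the powers cancel with $H(\gamma)=\tfrac{\gamma}{\gamma_0}H(\gamma_0)$, the horizontal displacement $\lambda t_1-\lambda^{\gamma/\gamma_0}u_1=\lambda^{\gamma/\gamma_0}(\lambda^{1-\gamma/\gamma_0}t_1-u_1)\sim-\lambda^{\gamma/\gamma_0}u_1$ becomes asymptotically negligible, and $f^\dagger_\lambda(\mbt;\mbu)\to g_0(t_2\mbe_2-\mbu)-g_0(-\mbu)=h_2(t_2;\mbu)$, whence the limit is $T_0(0,t_2)=Y_{\alpha,2}(t_2)=:T_-(\mbt)$; the domination is obtained exactly as in the balanced case from \eqref{gtan} and \eqref{qin2}. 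Joint finite-dimensional convergence in each regime is then automatic, since the $L_\alpha$-convergence above is established for arbitrary linear combinations, and the dependence of $T_\pm$ on a single coordinate follows from the form of the kernels $h_1(t_1;\cdot)$, $h_2(t_2;\cdot)$.

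The delicate point throughout is the construction of a $\lambda$-independent $L_\alpha(\R^2)$ majorant for $f^\dagger_\lambda(\mbt;\cdot)$ valid over the whole transition range: one must simultaneously control the behavior near the singularities of $g_0$ (where $\chi<0$ makes $g_0$ explode, forcing $\chi>-Q/\alpha$) and the decay at infinity (forcing $\chi<\tfrac{1}{q_1\vee q_2}-\tfrac{Q}{\alpha}$), and the interplay of these two constraints is precisely why \eqref{qin2} is the sharp range. Unlike in Theorem~\ref{mainthm}, no condition on $\partial_{12}g$ or a monotone-majorant hypothesis is needed here, because the \emph{global} pointwise bounds \eqref{gtan} on $g$ and $\partial_i g$ already supply the domination, including when $0<\alpha<1$. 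A secondary, purely computational obstacle is verifying that the three choices of $(\mu_1,\mu_2)$ together with the stated $H(\gamma)$ make all prefactor powers of $\lambda$ cancel exactly, which rests on the definition $H(\gamma_0)=\tfrac{1+\gamma_0}{\alpha}+\chi q_1$ and the identity $q_1/\gamma_0=q_2$.
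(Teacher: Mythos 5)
Your proposal is correct and follows essentially the same route as the paper's proof: reduction to Proposition \ref{offp} with the same choices of $(\mu_1,\mu_2)$ and normalization in the three regimes, pointwise convergence of the rescaled kernels via $g\sim g_0$ and generalized homogeneity, and an $L_\alpha$ majorant built from the global bounds \eqref{gtan} (the $\rho^\chi$ bound near the singularities, the gradient bound far away), with the same limit kernels $h_0$, $h_1(t_1;\cdot)$, $h_2(t_2;\cdot)$. The only cosmetic differences are your straight-line path instead of the paper's L-shaped path in the gradient representation, and your split at $|\mbu|\lessgtr\epsilon$ in place of the paper's $\rho(\mbu)\lessgtr 2^{q}\rho(\mbt)$; for the latter, $\epsilon$ must be chosen large enough (depending on $\mbt$) so that the outer region stays away from the segment joining $\0$ and $\mbt$, where the comparison $\rho(s\mbt-\mbu)\gtrsim\rho(\mbu)$ underlying your majorant $\rho(\mbu)^{\chi-1/q_1}+\rho(\mbu)^{\chi-1/q_2}$ is valid — which is clearly your intent.
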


\begin{remark}
	We can rewrite the condition \eqref{qin2} in terms of $p_i$, $i=1,2$, in \eqref{pp} as $P_{\frac{1}{\alpha}, \frac{1+\alpha}{\alpha}}<1$, $P_{\frac{1+\alpha}{\alpha},\frac{1}{\alpha}}<1$, $P> \frac{\alpha}{1+\alpha}$, which corresponds to parameter region $R_{22}$ in Figure \ref{fig}.
\end{remark}

\begin{remark}
	For $\alpha=2$, in Example 3.1 we have $g(\mbt) = g_0(\mbt) = \norm \mbt\norm^{H-1}$ with $q_1 = q_2 = 2$, $Q=1$, $\chi = \frac{H-1}{2}$ and conditions \eqref{qin2} are equivalent $\frac{1-H}{2} < \frac{1}{2} < 1 - \frac{H}{2} $ or $0<H<1$.
	Condition \eqref{gtan} in this example is also satisfied for all $\mbt \in \R^2_0$ since $\partial_i  \norm \mbt\norm^{H-1} = (H-1) \norm \mbt \norm^{H-2} \partial_i\norm \mbt \norm$ with $\partial_i \norm \mbt \norm = \frac{t_i}{\norm \mbt \norm}$, $i=1,2 $.
\end{remark}

\begin{proof}
Let us check that the stochastic integral in \eqref{T2}  is well-defined for any $\mbt \in \R^2 $ or $\|h_0 (\mbt; \cdot) \|_\alpha < \infty $, where $h_0(\mbt;  \mbu) := g_0(\boldsymbol{t} - \boldsymbol{u}) - g_0(-\mbu)$.
	  Let $q:=\max\{q_1,q_2,1\}$.
	Then using \eqref{gtan}, we get $\int_{\rho(\mbu) < 2^q \rho(\mbt)} |h_0(\mbt;\mbu)|^\alpha \d \mbu \le C \int_{\rho(\mbu) < 3^q \rho(\mbt)} \rho(\mbu)^{\alpha \chi} \d \mbu < \infty $ as $Q > - \alpha \chi $, see \eqref{intrho}.
	Next, for $\rho(\mbu)\ge 2^q\rho(\mbt)$ we rewrite $h_0(\mbt; \mbu)=  \int_0^{t_1} \partial_1 g_0 (s -u_1,t_2-u_2) \d s + \int_0^{ t_2} \partial_2 g_0 (-u_1, s-u_2) \d s$, where $\partial_1 g_0 (s -u_1,t_2-u_2) \le C \rho(\mbu)^{\chi - \frac{1}{q_1}}$, $\partial_2 g_0 (-u_1,s-u_2) \le C \rho(\mbu)^{\chi - \frac{1}{q_1}}$ by \eqref{gtan}, \eqref{rhoineq2}. We get $\int_{\rho(\mbu)\ge 2^q\rho(\mbt)} |h_0(\mbt; \mbu)|^\alpha \d \mbu \le C \int_{\rho(\mbu) \ge 2^q \rho(\mbt)} \rho(\mbu)^{\alpha(\chi -  \frac{1}{q_1 \vee q_2})}\d \mbu < \infty$.
Hence, $\|h_0 (\mbt; \cdot) \|_\alpha < \infty $.

	To prove the convergence in \eqref{Tlim} we use Proposition \ref{offp}.
	It suffices to consider the case $\mbt_0 = \0$.
	Let first $\gamma = \gamma_0$ with $H(\gamma_0) = \frac{1}{\alpha}(1 + \gamma_0) +  \chi q_1 $.
	Then $\lambda^{-H(\gamma_0)} X(\lambda^{\Gamma_0}\mbt) = \int_{\R^2} f_\lambda (\mbt; \mbu) M(\d \mbu)$,  where $f_\lambda (\mbt; \mbu) := \lambda^{-H(\gamma_0)} (g( \lambda^{\Gamma_0} \boldsymbol{t} - \boldsymbol{u}) - g(-\mbu)) $ satisfies
	%$$
	$f^\dagger_\lambda (\mbt; \mbu) = \lambda^{-\chi q_1} (g( \lambda^{\Gamma_0} (\boldsymbol{t} - \boldsymbol{u})) - g(- \lambda^{\Gamma_0}\mbu)) \to h_0(\mbt;  \mbu)$
	%$$
	for all $\mbu \neq \0, \mbt $ according to $g(\mbt) = g_0(\mbt)(1+o(1))$, $|\mbt| \to 0$.
	The domination argument implying $f^\dagger_\lambda (\mbt; \cdot) \to h_0(\mbt; \cdot)$ in $L_\alpha (\R^2)$ uses \eqref{gtan} and follows as in the proof $\|h_0 (\mbt; \cdot) \|_\alpha < \infty $ above.

	Next, let $\gamma < \gamma_0$ with $H(\gamma) = \frac{\gamma}{\gamma_0} H(\gamma_0) = \frac{1}{\alpha}(\gamma + \frac{\gamma}{\gamma_0}) + \gamma \chi q_2$.
	Then we have $\lambda^{-H(\gamma)} X(\lambda^{\Gamma}\mbt) = \int_{\R^2} f_{\lambda,\gamma} (\mbt;  \mbu) M(\d \mbu)$, where using Proposition \ref{offp} with $\mu_1 = \frac{\gamma}{\gamma_0}$, $\mu_2 = \gamma$, we see that
	%$$
	$f^\dagger_{\lambda,\gamma} (\mbt; \mbu) = \lambda^{-\gamma \chi q_2} (g( \lambda t_1 - \lambda^{\frac{\gamma}{\gamma_0}}  u_1, \lambda^\gamma t_2 - \lambda^\gamma  u_2)  - g(-\lambda^{\frac{\gamma}{\gamma_0}} u_1, - \lambda^\gamma u_2 )) \to h_0((0,t_2); \mbu)$
	%$$
	for all $\mbu \neq \0, (0,t_2)$.
	The $L_\alpha(\R^2)$-convergence $f^\dagger_{\lambda,\gamma} (\mbt; \cdot) \to h_0((0,t_2); \cdot)$ in the case  $\gamma < \gamma_0$ and the proof of \eqref{Tlim} for  $\gamma > \gamma_0$ follows analogously and we omit the details.
	Theorem \ref{mainthm2} is proved.
\end{proof}

\section{Concluding comments}
\label{sec7}

	\noi 1.  Unbalanced scaling ($\gamma \neq \gamma_0$)
	of fractional RF on $\R^2$ may lead to degenerate dependence in one direction (either horizontal or vertical).
	The critical  $\gamma_0$ coincides with the `intrinsic local dependence ratio'
	$\frac{q_1}{q_2}= \frac{p_1}{p_2}$ of the RF, defined in terms the exponents $\chi$, $q_i$, $i=1,2$, 
	in the asymptotic form \eqref{glim} of the moving-average kernel $g(\mbt)$ at $\mbt = \0$. The degeneration
	is apparent in $(H_1, H_2)$-MSS property of rectangent limits with one of $H_i, i=1,2$,
	equal 1 or 0, indicating either extreme positive  ($H_i = 1$) or extreme negative ($H_i = 0$) dependence of the limit RF in
	direction $\mbe_i$, $i=1,2 $. The above facts are very important for statistical estimation of $p_i$, $i=1,2$, using power variations of
	rectangular increments on a dense grid (see Comment 5 below).

\medskip

\noi 2.
We believe that our results can be extended to fractional type infinitely divisible RFs $X$ on $\R^d$, $d \ge 3 $.
However, the results in \cite{bier2017, sur2019} on  large-scale anisotropic scaling 
suggest that the class of the limit rectangent RFs in higher dimensions $d\ge 3$ is more complex  and its complete description can be difficult.
On the other side, it would be very interesting to extend the class of two-parameter L\'evy driven fractional RFs in \eqref{def:X} by including a \emph{random volatility RF} (independent of $M$) as in the case of \emph{ambit RF} \cite{barn2018}.

\medskip

\noi 3.
The $\gamma$-tangent and $\gamma$-rectangent limits in our paper are defined through finite-dimensional convergence only.
This raises the question of a functional convergence in these limits and also about  the  path properties of the limit tangent and rectangent RFs.
While in some cases these limits are a.s.\ continuous and classical, in other cases they are extremely singular, see Remark \ref{rem2}, suggesting that a functional convergence is not feasible in such cases.

%We expect that our results can be applied for statistical estimation and identification
%of fractional parameters $p_i$, $i=1,2$, based on observations of $X$ in \eqref{def:X} on a dense rectangular grid of $[0,1]^2$.
%For a truncated $\alpha$-stable counterpart to (isotropic) fractional Brownian RF in Example 3.1 and a dyadic grid this question is treated in \cite{benn2004, %cohen2012} using log-variations of (higher-order) increments of $X$.
%There is considerable limit theory for increment-based statistics of fractional RFs driven by Gaussian white noise \cite{cohen2013, loh2015, pakk2014, %pakk2016}.
%On the other hand, for a class of stationary increments pure jump L\'evy driven moving average processes   with one-dimensional time parameter \cite{bass2017} %provide a detailed study of the asymptotic behavior of \emph{power variations} computed from observations at $i/n$, $i=1,\dots, n$.
%For a class of L\'evy driven fractional isotropic RFs, \cite{bass2020} discuss the first order asymptotics of power variations computed from rectangular %increments of RF at points of a dense square grid.
%\medskip

\medskip

\noi 4. As noted by a referee, the scaling in \eqref{Tlim}, \eqref{Vlim} with diagonal matrix $\Gamma $ is  quite particular, raising
the question about these limits for general $2\times 2$ matrix $\Gamma $ with positive
real parts  of the eigenvalues, c.f.\ \cite{bier2007}.  This question is interesting and open.  We expect that
a possible answer depends on   $\Gamma $ or on the way  the point $\lambda^\Gamma \mbt$  tends to $\0$ as $\lambda \to 0$.
If the trajectory $\{\lambda^\Gamma \mbt, \, 0< \lambda < 1 \}$
winds up around $\0$ infinitely many times,
the limit in \eqref{Vlim} under the premises of Theorem 
\ref{mainthm}  probably do not exist.  On the other hand,
if the trajectory of $\lambda^\Gamma \mbt$
approaches  $\0$ along
some `oblique' direction,  the  limit in \eqref{Vlim} 
is more likely to exist since this case
resembles the scaling of  RF with `oblique' dependence axis \cite{pils2020}.

\medskip

\noi 5.
We expect that our results can be applied for statistical estimation and identification
of fractional parameters $p_i$, $i=1,2$, based on observations of $X$ in \eqref{def:X} on a dense rectangular grid of $[0,1]^2$. Estimation of fractional parameters (`Hurst estimation') is an important part of statistical inference for stochastic processes with one-dimensional time. In RF context, results on Hurst estimation
are less developed, with most studies limited to
parametric and/or Gaussian models
%There is considerable limit theory for increment-based statistics of fractional RFs driven by Gaussian white noise
\cite{cohen2013, lee2021, loh2015, pakk2014, pakk2016}.
For a truncated $\alpha$-stable (isotropic) counterpart to fractional Brownian RF in Example 3.1 this question is treated in \cite{benn2004, cohen2012} using log-variations of 
%(higher-order) 
(second-order) increments of $X$ on a dyadic grid. Using power variations of increments of $X$ on a dense square grid,
\cite{bass2020} discuss Hurst estimation for a general class of $\alpha$-stable fractional isotropic RFs. 
Obviously, in the case of anisotropic RF the number of fractional parameters is more than 1 and a fixed
	(square or non-square) observation grid seems  insufficient. Indeed, power variations computed from a grid of size $(\frac{1}{n}, \frac{1}{n^\gamma})$ for a fractional RF $X$
	as in this paper are supposed to estimate the normalizing exponent $H(\gamma)$ in Theorem  \ref{mainthm}, which does not determine $p_1 \neq p_2$ and takes a different form in different parameter regions in Figure \ref{fig}. It seems that consistent estimation of $p_i$, $i=1,2$, requires
	power variations from {\it two} grids corresponding to $\gamma', \gamma''$ such that $\gamma'<\gamma_0<\gamma''$, which should estimate the Hurst parameters $H_{\alpha,i}$, $\tilde H_{\alpha, i}$, $i=1,2$, in \eqref{defH} leading to 
	estimates
	of $p_i$, $i=1,2$.
	Finally, the question about fluctuations of such estimators or the second order asymptotics  of power variations
	(in spirit of \cite{bass2017})
	seems to be strongly
	related to the dependence structure of rectangent limits exposed in Theorem  \ref{mainthm}.

%%%%%%%%%%%%%%%%%%%%%%%%%%%%%%%%%%%%%%%%%%%%%%
%% Single Appendix:                         %%
%%%%%%%%%%%%%%%%%%%%%%%%%%%%%%%%%%%%%%%%%%%%%%
%\begin{appendix}
%\section*{???}%% if no title is needed, leave empty \section*{}.
%\end{appendix}
%%%%%%%%%%%%%%%%%%%%%%%%%%%%%%%%%%%%%%%%%%%%%%
%% Multiple Appendixes:                     %%
%%%%%%%%%%%%%%%%%%%%%%%%%%%%%%%%%%%%%%%%%%%%%%
%\begin{appendix}
%\section{???}
%
%\section{???}
%
%\end{appendix}
%%%%%%%%%%%%%%%%%%%%%%%%%%%%%%%%%%%%%%%%%%%%%%

\section*{Acknowledgement}

The authors thank two anonymous referees and the AE for useful comments. VP
%Vytaut\.e Pilipauskait\.e
acknowledges financial support from the project `Ambit fields: probabilistic properties and statistical inference' funded by Villum Fonden.
Also, VP gratefully acknowledges financial support of ERC Consolidator Grant 815703 `STAMFORD: Statistical Methods for High Dimensional Diffusions'.

%%%%%%%%%%%%%%%%%%%%%%%%%%%%%%%%%%%%%%%%%%%%%%
%% Supplementary Material, if any, should   %%
%% be provided in {supplement} environment  %%
%% with title and short description.        %%
%%%%%%%%%%%%%%%%%%%%%%%%%%%%%%%%%%%%%%%%%%%%%%
%\begin{supplement}
%\stitle{???}
%\sdescription{???.}
%\end{supplement}

\end{document}